\documentclass[11pt,reqno]{amsart}

\usepackage{amssymb,amsmath,amsthm,epsfig,bbold,amscd,thmtools,setspace,
mathtools,indentfirst,graphics,color,float,graphicx}

\usepackage{enumerate}
\usepackage{cases}
\usepackage{appendix}

\topmargin -.25in
\textheight 9in
\textwidth 6.5in
\oddsidemargin 0.0in
\evensidemargin 0.0in

\newtheorem{thm}{Theorem}[section]
\newtheorem{lem}{Lemma}[section]

\theoremstyle{definition}

\newtheorem{rem}{Remark}[section]
\newtheorem{ques}{Question}[section]

\def\R{{\mathbb R}}

\numberwithin{equation}{section}

\begin{document}

\title[Parabolic Choquard-Pekar Inequalities]{Initial Pointwise Bounds
  and Blow-up for Parabolic Choquard-Pekar Inequalities}


\author{Steven D. Taliaferro}
\address{Mathematics Department, Texas A\&M
    University, College Station, TX 77843-3368}
\email{stalia@math.tamu.edu}


\subjclass[2010]{35B09, 35B33, 35B44, 35B45,
35K10, 35K58, 35R09, 35R45.}

\keywords{nonlocal; parabolic; pointwise bound;
initial blow-up; Choquard; heat potential}

\date{}

\dedicatory{}

\begin{abstract}
We study the behavior as $t\to 0^+$ of nonnegative functions
\begin{equation}\label{0.1}
 u\in C^{2,1} (\mathbb{R}^n\times (0,1))
\cap L^\lambda (\mathbb{R}^n\times (0,1)),\quad n\ge 1,
\end{equation}
satisfying the parabolic Choquard-Pekar type inequalities 
\begin{equation}\label{0.2}
 0\leq u_t-\Delta u\leq(\Phi^{\alpha/n}*u^\lambda )u^\sigma \quad
\text{ in }B_1 (0)\times (0,1)
\end{equation}
where $\alpha\in(0,n+2)$, $\lambda>0$, and $\sigma\geq0$ are constants,
$\Phi$ is the heat kernel, and
$*$ is the convolution operation in $\mathbb{R}^n\times (0,1)$.  We provide
optimal conditions on $\alpha,\lambda$, and $\sigma$ such that
nonnegative solutions $u$ of \eqref{0.1},\eqref{0.2} satisfy
pointwise bounds in compact subsets of $B_1(0)$ as $t\to 0^+$. We
obtain similar results for nonnegative solutions of
\eqref{0.1},\eqref{0.2} when $\Phi^{\alpha/n}$ in \eqref{0.2} is
replaced with the fundamental solution $\Phi_\alpha$ of the fractional
heat operator $(\frac{\partial}{\partial t}-\Delta)^{\alpha/2}$.
\end{abstract}

\maketitle

\section{Introduction}\label{sec1}
In this paper we study the behavior as $t\to 0^+$ of nonnegative
functions  
\begin{equation}\label{1.1}
 u\in C^{2,1} (\mathbb{R}^n\times (0,T))
\cap L^\lambda (\mathbb{R}^n\times (0,T)),\quad n\ge 1,
\end{equation}
satisfying the nonlocal parabolic Choquard-Pekar type inequalities 
\begin{equation}\label{1.2}
 0\leq Hu\leq(\Phi^{\alpha/n}*u^\lambda )u^\sigma \quad
\text{ in }\Omega\times (0,T)
\end{equation}
where $\alpha\in(0,n+2)$, $\lambda>0$, $\sigma\geq0$, and $T>0$ are constants,
$\Omega$ is an open subset of $\mathbb{R}^n$, $Hu=u_t-\Delta u$ is
the heat operator,  
\begin{equation}\label{1.3}
  \Phi(x,t)=\begin{cases}
\frac{1}{(4\pi t)^{n/2}} 
e^{-\frac{|x|^2}{4t}}&\text{for }(x,t)\in \mathbb{R}^n\times (0,\infty)\\
   0&\text{for }(x,t)\in \mathbb{R}^n\times (-\infty,0]
   \end{cases}             
  \end{equation}
  is the heat kernel, and $*$ is the convolution operation in
$\mathbb{R}^n\times(0,T)$, that is,
\[
(\Phi^{\alpha/n}*u^\lambda)(x,t)
=\iint_{\R^n\times(0,T)}\Phi(x-y,t-s)^{\alpha/n}u(y,s)^\lambda dy\,ds.
\]
The regularity condition $u\in
  L^\lambda(\mathbb{R}^n\times (0,T))$ in \eqref{1.1} and the upper
  bound of $n+2$ for $\alpha$ are natural because one does not want
  the nonlocal convolution operation on the right side of \eqref{1.2}
  to be infinite at every point in $\mathbb{R}^n\times (0,T)$.

We also obtain results on the behavior as $t\to 0^+$ of nonnegative
solutions of \eqref{1.1},\eqref{1.2} when $\Phi^{\alpha/n}$ in \eqref{1.2} is
replaced with the fundamental solution $\Phi_\alpha$ of the fractional
heat operator $(\frac{\partial}{\partial t}-\Delta)^{\alpha/2}$. (See
Remark \ref{rem2}.)

A motivation for the study of \eqref{1.1},\eqref{1.2} comes from the
nonlocal elliptic equation
\begin{equation}\label{prototype}
-\Delta u=(\Gamma^{\alpha/(n-2)}* u^{\lambda}) |u|^{\lambda-2}u\quad
\text{ in }\mathbb{R}^n,
\end{equation}
where $\alpha\in (0,n)$, $\lambda>1$ and $\Gamma(x)=C(n)/|x|^{n-2}$ is
a fundamental solution of $-\Delta$. For $n=3$, $\alpha=1$,
and $\lambda=2$, equation \eqref{prototype} is known in the literature as the
{\it Choquard-Pekar equation} and was introduced in \cite{P1954} as a
model in quantum theory of a polaron at rest (see also
\cite{DA2010}). Later, the equation \eqref{prototype} appears as a model
of an electron trapped in its own hole, in an approximation to
Hartree-Fock theory of one-component plasma \cite{L1976}.  More
recently, the same equation \eqref{prototype} was used in a model of
self-gravitating matter (see, e.g., \cite{J1995,MPT1998}) and it is
known in this context as the {\it Schr\"odinger-Newton equation}.

The Choquard-Pekar equation \eqref{prototype} has been investigated
for a few decades by variational methods starting with the pioneering
works of Lieb \cite{L1976} and Lions \cite{Lions1980,Lions1984}. More
recently, new and improved techniques have been devised to deal with
various forms of \eqref{prototype} (see, e.g.,
\cite{MZ2010,MZ2012,MV2013a,MV2013b,MV2015,WW2009} and the references
therein).

Using nonvariational methods, the authors in \cite{MV2013b} obtained
sharp conditions for the nonexistence of nonnegative solutions to
$$-\Delta u \geq (\Gamma^{\alpha/(n-2)}* u^{\lambda}) u^{\sigma}$$
in an exterior domain of $\mathbb{R}^n$, $n\geq 3$.

For some very recent results on positive solutions Choquard-Pekar
equations and inequalities which have an isolated singularity at the
origin see \cite{CZ} and \cite{GT2016-2}.

Other examples of nonlocal equations which have been studied
extensively in recent years are equations containing the fractional
Laplacian and some of these equations are equivalent to equations
containing convolutions with powers of the fundamental solution
$\Gamma$ of $-\Delta u$. For example, see \cite{Zhuo2016} and
\cite{MCL2011}.

On the other hand, we know of no results for nonlocal equations or
inequalities when the nonlocal feature of the problem is due to
convolutions with powers of the fundamental solution \eqref{1.3} of
the heat equation. Our results for \eqref{1.1},\eqref{1.2} are, in
this regard, new.

In this paper we consider the following question.

\begin{ques}\label{ques1}
  Suppose $\alpha\in(0,n+2)$ and $\lambda>0$ are constants and
  $\Omega$ is an open subset of $\R^n$, $n\ge 1$.  For which
  nonnegative constants $\sigma$, if any, does there exist a
  continuous function $\varphi:(0,1)\to(0,\infty)$ such that for all
  compact subsets $K$ of $\Omega$ and all nonnegative solutions $u$ of
  \eqref{1.1},\eqref{1.2} we have
\begin{equation}\label{1.4}
 \max_{x\in K}u(x,t)=O(\varphi(t))\quad\text{ as }t\to0^+
\end{equation}
and what is the optimal such $\varphi$ when it exists?
\end{ques}

We call the function $\varphi$ in \eqref{1.4} a pointwise bound for
$u$ on compact subsets of $\Omega$ as $t\to0^+$.

\begin{rem}\label{rem1}
  Suppose $0<\lambda<(n+2)/n$. Then, since $u=\Phi$, where $\Phi$ is
  the heat kernel given by \eqref{1.3}, is a solution of
  \eqref{1.1},\eqref{1.2} and $\Phi(0,t)=(4\pi t)^{-n/2}$, we see that
  any pointwise bound for nonnegative solutions $u$ of
  \eqref{1.1},\eqref{1.2} on compact subsets of $\Omega$ as
  $t\to0^+$ must be at least as large as $t^{-n/2}$ and whenever
  $t^{-n/2}$ is such a bound it is necessarily optimal.
\end{rem}

In order to state our results for Question \ref{ques1}, we define for each
$\alpha\in(0,n+2)$ the continuous, piecewise linear functions $g_\alpha,
G_\alpha:(0,\infty)\to [0,\infty)$ by
\begin{equation}\label{1.5}
 g_\alpha (\lambda)=
 \begin{cases}
  \frac{n+2}{n} & \text{if }0<\lambda<\frac{n+2-\alpha}{n}\\
  \frac{2(n+2)-\alpha}{n}-\lambda & \text{if }\frac{n+2-\alpha}{n}
\leq\lambda<\frac{n+2}{n}\\
  \max\{0,1-\frac{\alpha-2}{n+2}\lambda \} & \text{if }
\lambda\geq\frac{n+2}{n}
 \end{cases}
\end{equation}
and
\[
 G_\alpha (\lambda)=
 \begin{cases}
  \frac{2(n+2)-\alpha}{n}-\lambda & \text{if }
0<\lambda<\frac{n+2}{n}\\
  \max\{0,1-\frac{\alpha-2}{n+2}\lambda \} & \text{if }
\lambda\geq\frac{n+2}{n}.
 \end{cases}
\]
These functions are graphed in Figure \ref{fig1} (resp. Figure
\ref{fig2}) when $\alpha\in (2,n+2)$ (resp. $\alpha\in (0,2]$). 
Note that  
\[
g_\alpha(\lambda)=G_\alpha(\lambda)
\quad\text{for }\frac{n+2-\alpha}{n}\le\lambda<\infty
\]
and
\[
g_\alpha(\lambda)<G_\alpha(\lambda)
\quad\text{for }0<\lambda<\frac{n+2-\alpha}{n}.
\]

According to the following theorem, if the point $(\lambda,\sigma)$
lies below the graph of $\sigma=g_\alpha(\lambda)$ then there exists a
pointwise bound for nonnegative solutions $u$ of
\eqref{1.1},\eqref{1.2} on compact subsets of $\Omega$ as $t\to 0^+$ .

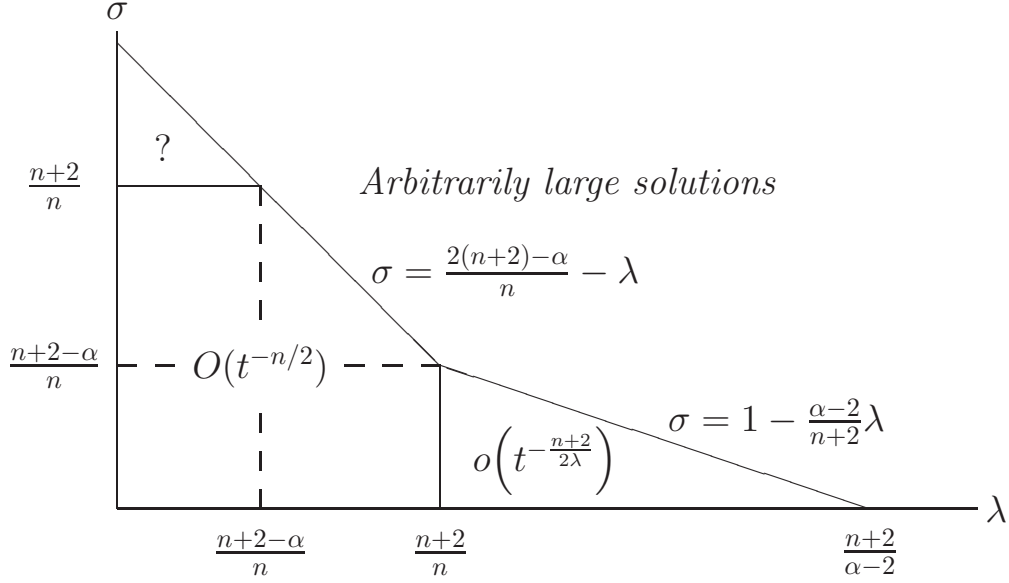
\begin{figure} \Large 

\setlength{\unitlength}{1.0in}

\begin{picture}(6,3.2)(-0.2,0)
\put(0.75,0.5){\line(0,1){2.50}}
\put(0.75,0.5){\line(1,0){4.50}}    
\put(0.75,2.19){\line(1,0){0.75}}
\put(0.75,2.94){\line(1,-1){1.69}}
\put(2.44,0.50){\line(0,1){0.75}}
\put(4.69,0.50){\line(-3,1){2.25}}

\put(1.20,0.12){\makebox(0.6,0.3){$\frac{n+2-\alpha}{n}$}}
\put(2.14,0.12){\makebox(0.6,0.3){$\frac{n+2}{n}$}}
\put(4.39,0.12){\makebox(0.6,0.3){$\frac{n+2}{\alpha-2}$}}
\put(0.12,1.10){\makebox(0.6,0.3){$\frac{n+2-\alpha}{n}$}}
\put(0.12,2.04){\makebox(0.6,0.3){$\frac{n+2}{n}$}}

\multiput(1.50,0.50)(0,0.2){3}{\line(0,1){0.1}}
\multiput(1.50,2.19)(0,-0.2){4}{\line(0,-1){0.1}}

\multiput(0.75,1.25)(0.2,0){2}{\line(1,0){0.1}}
\multiput(2.44,1.25)(-0.2,0){3}{\line(-1,0){0.1}}

\put(2.54,0.6){\makebox(0.9,0.3){$o\Bigl(t^{-\frac{n+2}{2\lambda}}\Bigr)$}}
\put(1.1,1.10){\makebox(0.8,0.3){$O(t^{-n/2})$}}
\put(3.6,0.8){\makebox(1.2,0.3){$\sigma=1-\frac{\alpha-2}{n+2}\lambda$}}
\put(0.89,2.29){\makebox(0.2,0.2){?}}
\put(2.03,1.6){\makebox(1.5,0.3){$\sigma=\frac{2(n+2)-\alpha}{n}-\lambda$}}
\put(1.95,2.05){\makebox(2.3,0.3){\it Arbitrarily large solutions}}

\put(5.25,0.40){\makebox(0.2,0.2){$\lambda$}}
\put(0.65,3.01){\makebox(0.2,0.2){$\sigma$}}
\end{picture}

\vskip -0.25in
\caption{Case $\alpha\in (2,n+2)$.}
\label{fig1}
\end{figure}

\begin{figure} \Large 

\setlength{\unitlength}{1.0in}

\begin{picture}(6,3.2)(-0.2,0)
\put(0.75,0.5){\line(0,1){2.50}}
\put(0.75,0.5){\line(1,0){4.50}}    
\put(0.75,2.19){\line(1,0){0.75}}
\put(0.75,2.94){\line(1,-1){1.69}}
\put(2.44,0.50){\line(0,1){0.75}}
\put(2.44,1.25){\line(3,1){2.75}}

\put(1.20,0.12){\makebox(0.6,0.3){$\frac{n+2-\alpha}{n}$}}
\put(2.14,0.12){\makebox(0.6,0.3){$\frac{n+2}{n}$}}
\put(0.12,1.10){\makebox(0.6,0.3){$\frac{n+2-\alpha}{n}$}}
\put(0.12,2.04){\makebox(0.6,0.3){$\frac{n+2}{n}$}}

\multiput(1.50,0.50)(0,0.2){3}{\line(0,1){0.1}}
\multiput(1.50,2.19)(0,-0.2){4}{\line(0,-1){0.1}}

\multiput(0.75,1.25)(0.2,0){2}{\line(1,0){0.1}}
\multiput(2.44,1.25)(-0.2,0){3}{\line(-1,0){0.1}}

\put(2.90,0.75){\makebox(0.9,0.3){$o\Bigl(t^{-\frac{n+2}{2\lambda}}\Bigr)$}}
\put(1.1,1.10){\makebox(0.8,0.3){$O(t^{-n/2})$}}
\put(3.6,1.4){\makebox(1.2,0.3){$\sigma=1+\frac{2-\alpha}{n+2}\lambda$}}
\put(0.89,2.29){\makebox(0.2,0.2){?}}
\put(1.73,1.9){\makebox(1.5,0.3){$\sigma=\frac{2(n+2)-\alpha}{n}-\lambda$}}
\put(1.95,2.35){\makebox(2.3,0.3){\it Arbitrarily large solutions}}

\put(5.25,0.40){\makebox(0.2,0.2){$\lambda$}}
\put(0.65,3.01){\makebox(0.2,0.2){$\sigma$}}
\end{picture}

\vskip -0.25in
\caption{Case $\alpha\in (0,2]$. When $\alpha=2$ the graph on the
  interval $\lambda>(n+2)/n$ is the horizontal half line $\sigma=1$.}
\label{fig2}
\end{figure}
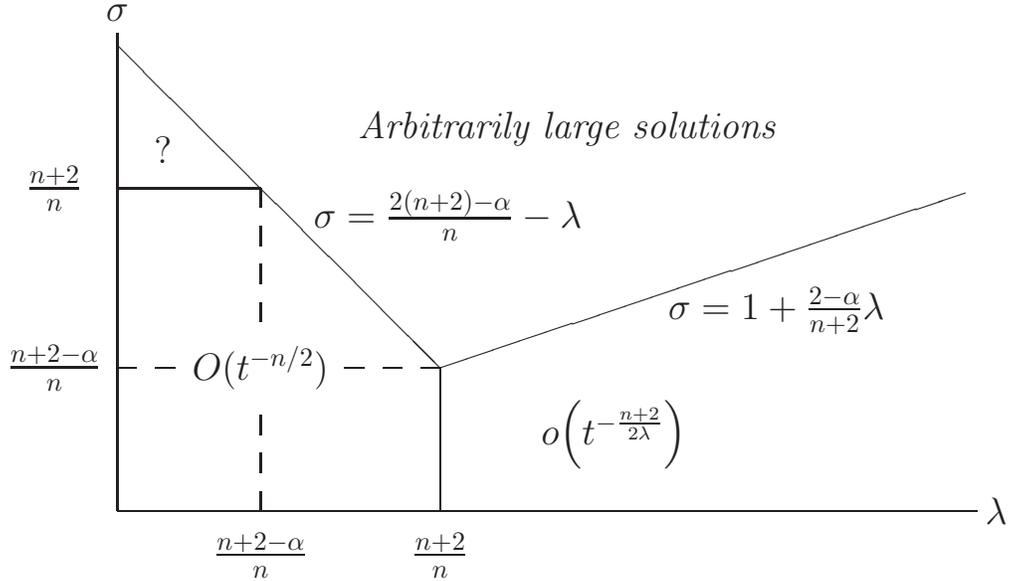

\begin{thm}\label{thm1.1}
  Suppose $u$ is a nonnegative solution of \eqref{1.1},\eqref{1.2} where
  $\alpha\in(0,n+2)$, $\lambda>0$, $T>0$, and
  $$0\leq\sigma<g_\alpha (\lambda)$$  
 are constants and $\Omega$ is an open subset of $\R^n$. Then for each
 compact subset $K$ of $\Omega$ we have as $t\to 0^+$ that 
\begin{numcases}{\max_{x\in K}u(x,t)=}
O(t^{-n/2}) & if $0<\lambda<\frac{n+2}{n}$ \label{1.6}\\
o(t^{-(n+2)/(2\lambda)}) & if $\lambda\ge\frac{n+2}{n}.$ \label{1.7}
\end{numcases}
\end{thm}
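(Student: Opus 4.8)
The plan is to derive the pointwise bound by bootstrapping from the integrability hypothesis $u \in L^\lambda(\mathbb{R}^n \times (0,T))$ through a representation-formula estimate for the heat operator. First I would fix a compact $K \subset \Omega$, choose a slightly larger ball $B_{2\rho}(x_0) \subset \Omega$ covering $K$, and for a cutoff-type argument work with the local solution on a parabolic cylinder. The key structural input is that $0 \le Hu$ makes $u$ a subsolution, while $Hu \le (\Phi^{\alpha/n} * u^\lambda) u^\sigma$ controls the source term; writing $f := Hu \ge 0$, one has $u(x,t) = (\Phi * f)(x,t) + (\text{caloric part})$ locally, so $u$ is bounded above by a heat potential of $f$ plus a term smooth up to $t = 0^+$ away from $\partial\Omega$. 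The crux is therefore to estimate $\Phi * \big[(\Phi^{\alpha/n} * u^\lambda) u^\sigma\big]$ pointwise as $t \to 0^+$.

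Next I would control the convolution factor $w := \Phi^{\alpha/n} * u^\lambda$. Since $\Phi(x,t)^{\alpha/n} = (4\pi t)^{-\alpha/2} e^{-\alpha|x|^2/(4nt)}$, this is (up to constants) a caloric-type potential of the measure $u^\lambda \, dy\, ds$, which has \emph{finite total mass} by \eqref{1.1}; the condition $\alpha < n+2$ is exactly what keeps $\Phi^{\alpha/n}$ integrable in $(x,t)$ near the origin in $\mathbb{R}^n \times (0,\infty)$, so $w$ is finite a.e. and, more importantly, one gets an $L^p_{x,t}$ bound on $w$ over the relevant cylinder via Young's inequality with the explicit exponent coming from $\|\Phi^{\alpha/n}\|_{L^q}$ on bounded time intervals. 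Then $(\Phi^{\alpha/n} * u^\lambda) u^\sigma = w \, u^\sigma$, and I would combine the $L^p$ bound on $w$ with the $L^\lambda$ bound on $u$ (hence an $L^{\lambda/\sigma}$-type bound on $u^\sigma$, handling $\sigma = 0$ trivially) via Hölder to place the source term $f$ in some $L^r_{x,t}(B_\rho \times (0,T))$. Feeding this back into the heat potential $\Phi * f$ and invoking standard parabolic smoothing estimates for $\Phi * (\,\cdot\,)$ — the $L^r \to L^\infty$ or $L^r \to$ pointwise-decay estimates with the gain measured in powers of $t$ — yields a bound of the form $u(x,t) \le C t^{-\beta}$ with $\beta$ an explicit function of $n, \alpha, \lambda, \sigma$. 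The hypothesis $\sigma < g_\alpha(\lambda)$ is precisely the condition guaranteeing that all the intermediate exponents are admissible (i.e.\ the relevant conjugate exponents are positive and the Young/Hölder inequalities close) and that $\beta \le n/2$ in the range $\lambda < (n+2)/n$, giving \eqref{1.6}, while in the range $\lambda \ge (n+2)/n$ one reads off $\beta = (n+2)/(2\lambda)$ as the borderline, with the strict inequality $\sigma < g_\alpha(\lambda)$ providing the extra room to upgrade $O$ to $o$ in \eqref{1.7}.

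The sharpening to little-$o$ in \eqref{1.7} I would obtain by a dichotomy/contradiction argument: if $\limsup_{t\to 0^+} t^{(n+2)/(2\lambda)} \max_K u(x,t) > 0$, then along a sequence $t_j \to 0^+$ one has a lower bound on $u$ forcing $u^\lambda$ to carry nonnegligible mass in $\mathbb{R}^n \times (0,T)$ near $t = 0$; a rescaling $u_j(x,t) = t_j^{(n+2)/(2\lambda)} u(t_j^{1/2} x + x_0, t_j t)$ converts the inequality \eqref{1.2} into a limiting inequality whose only nonnegative solution with the inherited integrability is zero (using that $(\lambda,\sigma)$ lies strictly below $g_\alpha$, so the scaling is subcritical), contradicting the nonzero limit. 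The main obstacle I anticipate is technical bookkeeping in the convolution estimate: the convolution in \eqref{1.2} runs over all of $\mathbb{R}^n \times (0,T)$, not just the local cylinder, so one must carefully split the $y$-integration into a near region (controlled by the local $L^\lambda$ norm together with interior parabolic regularity of $u$) and a far region (controlled by the Gaussian decay of $\Phi^{\alpha/n}$ and the global $L^\lambda$ bound), and track how each piece contributes to the power of $t$. Getting the far-field contribution to not worsen the exponent — and verifying that the borderline cases in the definition \eqref{1.5} of $g_\alpha$ correspond exactly to equality in the controlling inequalities — is where the real work lies.
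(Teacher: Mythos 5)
Your outline captures the correct starting ingredients — the representation formula $u = \Phi * Hu + (\text{heat potential of a measure}) + (\text{caloric part})$, heat potential ($L^p\to L^q$) smoothing estimates, and a contradiction argument via parabolic rescaling for the $o$-bound — but there are two genuine gaps, one quantitative and one structural.

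The quantitative gap is that a \emph{single} pass of Young + H\"older + heat-potential smoothing does not close on the full range $\sigma < g_\alpha(\lambda)$. Starting from $u^\lambda \in L^1$, Theorem~\ref{thmB.2} gives $w := \Phi^{\alpha/n}*u^\lambda \in L^{p_2}$ with $1/p_2 = \alpha/(n+2)$. Pairing this with $u^\sigma \in L^{\lambda/\sigma}$ via H\"older puts $Hu$ in $L^r$ with $1/r = \alpha/(n+2) + \sigma/\lambda$, and feeding this back through $\Phi * (\cdot)$ yields an $L^\infty$ bound only when $r > (n+2)/2$, i.e. $\sigma < (2-\alpha)\lambda/(n+2)$. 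That misses the allowed range by exactly the additive constant $1$ when $\lambda \ge (n+2)/n$, since there $g_\alpha(\lambda) = 1 + (2-\alpha)\lambda/(n+2)$. The paper recovers this extra room via a genuine Moser-type iteration (Lemmas~\ref{lem4.1} and~\ref{lem5.2}), in which each iterate improves the integrability exponent of $Hv$ (after a fixed parabolic rescaling) by a uniform amount $C_0 = C_0(n,\lambda,\sigma,\alpha)$; a finite but possibly large number of iterations, not one bootstrap step, is needed to reach $L^\infty$. Your proposal reads as a one-shot estimate and would therefore prove a strictly weaker theorem.

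The structural gap concerns the $o(t^{-(n+2)/(2\lambda)})$ conclusion. You propose a blow-up/Liouville argument: rescale along a bad sequence $t_j$ and argue the limiting inequality has only the trivial solution. That Liouville theorem is neither stated nor proved, and it is not what drives the paper's argument. Instead, the $o$-gain has two sources in the paper: (i) the heat potential of the boundary measure is handled by Lemma~\ref{lem2.4}, which shows that a finite measure whose heat potential lies in $L^\lambda$ automatically has heat potential $o(t^{-(n+2)/(2\lambda)})$ — not merely $O$; and (ii) after rescaling, the factor $\hat\varepsilon_j = t_j^{-\frac{n+2}{2\lambda}(\sigma - 1 + \frac{\alpha-2}{n+2}\lambda)} \to 0$ appears in \eqref{5.19} and forces the Moser iterates $v_j$ to tend to \emph{zero} in progressively stronger $L^q$ norms, contradicting $\liminf v_j(0,0)>0$. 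This is a quantitative decay pushed through the iteration, not a qualitative blow-up limit. A related omission: you describe the non-potential part of the representation as ``smooth up to $t=0^+$'', but the measure term $\int\Phi(x-y,t)\,d\mu(y)$ is not; it generically blows up like $t^{-n/2}$ (this is precisely where \eqref{1.6} comes from, cf. Remark~\ref{rem2.2} and Remark~\ref{rem1}), and for \eqref{1.7} its contribution must be sharpened via Lemma~\ref{lem2.4}. Finally, the paper treats the range $0<\lambda<(n+2-\alpha)/n$ by a completely separate short argument (Theorem~\ref{thm3.1}): there the convolution factor is directly in $L^\infty$ and the problem reduces to $0\le Hv\le Cv^\sigma$, handled by \cite{T2011}. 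Your proposal does not single this case out, and the three-regime structure of $g_\alpha$ reflects three genuinely distinct arguments.
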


The estimate \eqref{1.6} is optimal by Remark \ref{rem1}.
The exponent $-(n+2)/(2\lambda)$ in \eqref{1.7} is also optimal by the
following result.

\begin{thm}\label{thm1.2}
Suppose
\[
\lambda\ge\frac{n+2}{n}\quad\text{and}\quad 
\gamma=\frac{n+2-\varepsilon}{2\lambda}
\]
for some $\varepsilon\in(0,1)$. Then there exists a $C^\infty$
positive solution $u$ of 
\[
Hu=0\quad\text{in }\R^n\times (0,\infty)
\]
such that
\[
u\in L^\lambda(\R^n\times(0,T))\quad\text{for all } T>0
\]
and
\[
u(0,t)=t^{-\gamma}\quad\text{for all } t>0.
\]
\end{thm}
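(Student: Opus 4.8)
The plan is to build $u$ as a superposition of time–shifted heat kernels. Set
\[
\beta=\frac n2-1-\gamma .
\]
The hypotheses $\lambda\ge\frac{n+2}{n}$ and $\varepsilon\in(0,1)$ force $0<\gamma<\frac n2$, so $\beta\in\bigl(-1,\frac n2-1\bigr)$. Define
\[
u(x,t)=c\int_0^\infty\Phi(x,t+s)\,s^\beta\,ds,\qquad (x,t)\in\R^n\times(0,\infty),
\]
with a normalization constant $c>0$ to be chosen. Since $0<\Phi(x,t+s)\le(4\pi(t+s))^{-n/2}$ and, more generally, every space–time derivative of $(x,t)\mapsto\Phi(x,t+s)$ is bounded by $C(t+s)^{-n/2-k}$ uniformly for $(x,t)$ in a compact subset of $\R^n\times(0,\infty)$, the condition $\beta>-1$ makes the $s$–integral converge near $s=0$ and the condition $\beta<\frac n2-1$ makes it converge near $s=\infty$; the same bounds dominate all integrals obtained by differentiating under the integral sign, locally uniformly. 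Hence $u\in C^\infty(\R^n\times(0,\infty))$ and $u>0$, and because $(x,t)\mapsto\Phi(x,t+s)$ is caloric on $\{t+s>0\}\supset\R^n\times(0,\infty)$ for each fixed $s>0$, differentiating under the integral gives $Hu\equiv0$.

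Next I would fix the value at the origin. Since $\Phi(0,t+s)=(4\pi(t+s))^{-n/2}$, the substitution $s=t\rho$ and the identity $\beta+1-\frac n2=-\gamma$ give
\[
u(0,t)=c(4\pi)^{-n/2}t^{-\gamma}\int_0^\infty(1+\rho)^{-n/2}\rho^\beta\,d\rho
=c(4\pi)^{-n/2}B\!\left(\beta+1,\gamma\right)t^{-\gamma},
\]
where the Beta integral is finite because $\beta+1>0$ and $\frac n2-\beta-1=\gamma>0$. Choosing $c=(4\pi)^{n/2}/B(\beta+1,\gamma)$ yields $u(0,t)=t^{-\gamma}$ for all $t>0$.

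For the $L^\lambda$ bound, the same substitution $s=t\rho$ exhibits $u$ as self–similar,
\[
u(x,t)=t^{-\gamma}\,\Psi\!\left(\frac{|x|}{\sqrt t}\right),\qquad
\Psi(\xi)=c(4\pi)^{-n/2}\int_0^\infty(1+\rho)^{-n/2}e^{-\xi^2/(4(1+\rho))}\rho^\beta\,d\rho .
\]
From $e^{-\xi^2/(4(1+\rho))}\le1$ one gets $0<\Psi\le\Psi(0)=1$, and splitting the $\rho$–integral over $[0,1]$, $[1,\xi^2]$, and $[\xi^2,\infty)$ — using $1+\rho\le2\rho$ on the middle interval, and the exponential decay $e^{-\xi^2/(4(1+\rho))}\le e^{-\xi^2/8}$ on $[0,1]$ — yields $\Psi(\xi)\le C\xi^{-2\gamma}$ for $\xi\ge1$, the exponent $-2\gamma$ again coming from $\beta-\frac n2+1=-\gamma$. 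Then, changing variables $x=\sqrt t\,y$ in the inner integral,
\[
\iint_{\R^n\times(0,T)}u^\lambda\,dx\,dt
=\Bigl(\int_{\R^n}\Psi(|y|)^\lambda\,dy\Bigr)\int_0^T t^{\,n/2-\gamma\lambda}\,dt .
\]
The spatial factor is finite because $\Psi$ is bounded and $\Psi(|y|)^\lambda|y|^{n-1}\sim|y|^{\,n-1-2\gamma\lambda}$ with $2\gamma\lambda=n+2-\varepsilon>n$; the temporal factor is finite because $n/2-\gamma\lambda=-1+\varepsilon/2>-1$. Both are finite for every $T>0$.

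The construction itself is the natural one, so the computations above are mostly routine; the step requiring the most care is the decay estimate $\Psi(\xi)=O(\xi^{-2\gamma})$ together with the bookkeeping that both resulting integrability conditions hold. This is precisely where the hypotheses are used: $\lambda\ge\frac{n+2}{n}$ is what guarantees $\gamma<\frac n2$, hence $\beta>-1$, so that the superposition is a legitimate smooth caloric function with the right value at the origin; and the strict inequality built into $\gamma=\frac{n+2-\varepsilon}{2\lambda}$ with $\varepsilon>0$ is exactly the slack needed to make $2\gamma\lambda>n$ and $n/2-\gamma\lambda>-1$ simultaneously, i.e.\ to land $u$ in $L^\lambda(\R^n\times(0,T))$.
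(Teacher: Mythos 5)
Your proof is correct. In fact, your $u$ is the very same function the paper uses: since $|y|^{-2\gamma}$ is proportional to $\int_0^\infty s^{\,n/2-1-\gamma}\Phi(y,s)\,ds$ (the Gamma-function representation of a power), the semigroup identity $\int_{\R^n}\Phi(x-y,t)\Phi(y,s)\,dy=\Phi(x,t+s)$ shows that your subordination integral $c\int_0^\infty\Phi(x,t+s)s^\beta\,ds$ agrees, up to the normalization constant you choose, with the paper's choice $\Psi(x,t)=\int_{\R^n}\Phi(x-y,t)\,|y|^{-2\gamma}\,dy$, i.e.\ the caloric extension of homogeneous initial data. Where your write-up diverges from the paper is in \emph{how} the needed properties are extracted: the paper derives them from the parabolic scaling identity $u(ax,a^2t)=a^{-2\gamma}u(x,t)$ (immediate by substitution $y\mapsto ay$ in the initial-data integral), obtaining the self-similar form $u=|x|^{-2\gamma}g(t/|x|^2)$ and the two asymptotics $g(\zeta)\to1$ as $\zeta\to0^+$ and $g(\zeta)\sim u(0,1)\zeta^{-\gamma}$ as $\zeta\to\infty$ via a limit-of-ratios argument, before computing the $L^\lambda$ integral. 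You instead read the self-similar profile $\Psi(\xi)$ directly off the subordination integral and estimate its tail $\Psi(\xi)=O(\xi^{-2\gamma})$ by splitting the $\rho$-integral at $1$ and at $\xi^2$ (the substitution $\rho=\xi^2\sigma$ on the middle and outer pieces is what produces the $\xi^{-2\gamma}$ factor, the hypothesis $\beta<n/2-1$ making the $\sigma$-integral converge). Both routes yield identical integrability bookkeeping — finiteness of the spatial integral of $\Psi^\lambda$ and of $\int_0^T t^{n/2-\gamma\lambda}\,dt$ — and both use $\lambda\ge(n+2)/n$ only to ensure $\gamma<n/2$ and $\varepsilon>0$ only to leave the strict slack required. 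Your argument is a fully adequate, slightly more computational alternative to the paper's scaling-based one.
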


By the next theorem, if the point $(\lambda,\sigma)$ lies above the
graph of $\sigma=G_\alpha(\lambda)$ then there does not exist a
pointwise bound for nonnegative solutions $u$ of
\eqref{1.1},\eqref{1.2} on compact subsets of $\Omega$ as $t\to 0^+$.

\begin{thm}\label{thm1.3}
 Suppose $\alpha,\lambda$, and $\sigma$ are constants satisfying
 $$\alpha\in(0,n+2), \quad \lambda>0, \quad \text{and}\quad\ 
\sigma>G_\alpha (\lambda).$$
 Let $\varphi:(0,1)\to(0,\infty)$ be a continuous function satisfying
 $$\lim_{t\to0^+}\varphi(t)=\infty.$$
 Then there exists a positive solution $u$ of \eqref{1.1},\eqref{1.2}
 with $T=1$ and $\Omega=\R^n$ such that
 $$u(0,t)\neq O(\varphi(t))\quad\text{ as }t\to 0^+.$$
\end{thm}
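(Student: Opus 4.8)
The goal is to construct, for any prescribed blow-up profile $\varphi$, a positive solution of \eqref{1.1},\eqref{1.2} that beats it. The plan is to reduce to an ODE-type construction along the lines of Theorem \ref{thm1.2}, but now exploiting the nonlinear right-hand side. Since $\sigma>G_\alpha(\lambda)$, the point $(\lambda,\sigma)$ lies strictly above the graph of $G_\alpha$; in the region $0<\lambda<(n+2)/n$ this means $\sigma+\lambda>\frac{2(n+2)-\alpha}{n}$ (i.e. $\sigma$ exceeds the ``linear'' threshold), and in the region $\lambda\ge(n+2)/n$ it means $\sigma>\max\{0,1-\frac{\alpha-2}{n+2}\lambda\}$. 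In either case the inequality \eqref{1.2} permits superlinear growth of $u$ that cannot be bounded by any fixed $\varphi$.

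\medskip

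\noindent\textbf{Step 1: A one-parameter family of self-similar subsolutions.} First I would look for solutions concentrated near the origin of the form $u_k(x,t)=k^a \Phi(x,t/k^b)$ or, more flexibly, $u(x,t)=\mu\,t^{-\gamma}w(x/\sqrt t)$ for suitable profile $w$ and free parameters $\mu,\gamma$. The heat kernel itself gives $Hu=0\le$ RHS once one checks $u\in L^\lambda$, so the real content is: choose $\gamma$ as large as the constraint $\sigma>G_\alpha(\lambda)$ allows while keeping $u\in L^\lambda(\R^n\times(0,1))$ and keeping $Hu\le (\Phi^{\alpha/n}*u^\lambda)u^\sigma$. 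Computing $(\Phi^{\alpha/n}*u^\lambda)(0,t)$ for such a self-similar $u$ is a scaling computation: with $u(y,s)^\lambda \sim \mu^\lambda s^{-\gamma\lambda}$ near $y=0$, and $\Phi(x-y,t-s)^{\alpha/n}$ carrying the weight $(t-s)^{-\alpha/2}$, the convolution at $(0,t)$ scales like $\mu^\lambda t^{\,n/2+1-\alpha/2-\gamma\lambda}$ up to a finite constant (finiteness requires $\gamma\lambda<(n+2-\alpha)/2$ together with $\gamma\lambda<(n+2)/2$, which is exactly the $L^\lambda$ and $\alpha<n+2$ condition). Then the right-hand side at the origin is $\gtrsim \mu^{\lambda+\sigma}t^{\,n/2+1-\alpha/2-\gamma\lambda-\gamma\sigma}$, while $Hu=0$. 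So for a pure heat kernel there is no ODE obstruction; the point of $\sigma>G_\alpha(\lambda)$ is that it lets us instead take $Hu\not\equiv0$ and push $\gamma$ strictly past $(n+2)/(2\lambda)$ (the barrier in Theorem \ref{thm1.1}).

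\medskip

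\noindent\textbf{Step 2: Solving the resulting ODE/integral inequality.} The cleanest route is to seek $u$ solving $Hu=(\Phi^{\alpha/n}*u^\lambda)u^\sigma$ with equality, by a fixed-point/monotone-iteration argument in a weighted space. Set up the Duhamel formulation $u=\Phi*_{t}u_0 + \Phi*_t[(\Phi^{\alpha/n}*u^\lambda)u^\sigma]$ on $\R^n\times(0,1)$ with a small, suitably singular initial trace $u_0$ concentrated at the origin (for instance $u_0$ a multiple of a narrow Gaussian, or in the limit a point mass weighted to just miss $L^\lambda$ integrability of the linear part but be rescued by finite-time existence on $(0,1)$). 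Using the scaling from Step 1, choose the singularity exponent $\gamma_0$ of $u_0$ so that $\gamma_0$ is as close as desired to the critical value; the condition $\sigma>G_\alpha(\lambda)$ guarantees the nonlinear Duhamel term is subordinate (its exponent is $n/2+2-\alpha/2-\gamma\lambda-\gamma\sigma$ after one more time-integration, which stays below $-\gamma$ precisely when $\sigma+\lambda>\frac{2(n+2)-\alpha}{n}$ in the small-$\lambda$ range, resp. $\sigma>1-\frac{\alpha-2}{n+2}\lambda$ in the large-$\lambda$ range), so a contraction mapping on a ball $\{|u(x,t)|\le M t^{-\gamma}e^{-c|x|^2/t}\}$ closes. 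Then $u(0,t)\sim c\,t^{-\gamma}$ with $\gamma$ as large as we like, in particular $\gamma$ large enough that $t^{-\gamma}\neq O(\varphi(t))$ for the given $\varphi$ — but one must be careful that $\varphi$ is an \emph{arbitrary} function, not a power, so actually we need to iterate: build a sequence of bumps at times $t_j\to0^+$.

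\medskip

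\noindent\textbf{Step 3: Defeating an arbitrary $\varphi$ by superposition of bumps.} This is the step I expect to be the main obstacle and where the proof really lives. Because $\varphi$ is only assumed continuous with $\varphi(t)\to\infty$, no single power $t^{-\gamma}$ need dominate it. The standard device is: pick times $t_1>t_2>\cdots\to0^+$ and amplitudes $A_j\to\infty$ with $A_j/\varphi(t_j)\to\infty$, and construct localized-in-time-and-space positive building blocks $v_j$ each solving $Hv_j=0$ (or satisfying \eqref{1.2}), supported essentially in $\R^n\times(t_{j}/2,2t_j)$, with $v_j(0,t_j)\ge A_j$, small $L^\lambda$ norm $\|v_j\|_{L^\lambda}\le 2^{-j}$, and small sup norm away from their own time-window. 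Then set $u=\sum_j v_j$ (plus possibly a fixed background to make it everywhere positive). Subadditivity of $H$ alone is not enough because of the nonlinear coupling on the right, so one checks instead that $Hu=\sum Hv_j\le\sum(\Phi^{\alpha/n}*v_j^\lambda)v_j^\sigma\le(\Phi^{\alpha/n}*u^\lambda)u^\sigma$ by positivity and monotonicity of the convolution and the powers — here the well-separated time-supports are used to control cross terms, and the summability $\sum\|v_j\|_{L^\lambda}<\infty$ gives $u\in L^\lambda(\R^n\times(0,1))$. Finally $u(0,t_j)\ge v_j(0,t_j)\ge A_j$, so $u(0,t)\neq O(\varphi(t))$. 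The delicate points are: (i) making each $v_j$ simultaneously large at its peak and small in $L^\lambda$ — this forces $v_j$ to be a thin, tall spike, whose existence is precisely where $\lambda$ and $\sigma$ must satisfy $\sigma>G_\alpha(\lambda)$ (for a pure heat kernel spike, the constraint is just $\lambda<(n+2)/n$ vs $\ge$, matching the two branches of $G_\alpha$; for the rest one genuinely needs the nonlinearity to sustain the spike against diffusion, via Step 2); (ii) ensuring the spikes do not interfere, handled by taking $t_{j+1}\ll t_j$ geometrically and using Gaussian decay of $\Phi$ in time; (iii) verifying the differential inequality globally, which reduces to the separate-window estimate plus positivity. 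I would present Steps 1–2 as a lemma producing a single scalable spike with the required norm/peak trade-off, then devote the bulk of the proof to the superposition bookkeeping in Step 3.
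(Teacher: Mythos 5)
Your high-level strategy in Step~3 --- superposing well-separated spikes at times $t_j\to 0^+$ whose peaks beat $j\varphi(t_j)$ --- is precisely what the paper does (Theorems~\ref{thm3.2} and \ref{thm5.3}). But the engine you propose in Step~2 is not what the paper uses and, as sketched, it has real problems. You want to produce each spike by a Duhamel/contraction argument solving $Hu=(\Phi^{\alpha/n}*u^\lambda)u^\sigma$ with \emph{equality}. That is unnecessarily hard: \eqref{1.2} only requires $0\le Hu\le(\Phi^{\alpha/n}*u^\lambda)u^\sigma$, so one is free to prescribe $Hu=f$ for any convenient nonnegative source $f$ and then \emph{verify} the upper inequality. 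The paper exploits this freedom: it takes $f_j=\chi_j w_j'$ with $w_j(t)=(T_j-t)^{-p}$ supported in a thin paraboloid $\Omega_j$ shrinking to $(0,T_j)$ (with $p=(n-\varepsilon)/2$ when $\lambda<(n+2)/n$, and $p=(n+2-2\varepsilon)/(2\lambda)$ via $w_j=(T_j-t)^{-1/p}$ when $\lambda\ge(n+2)/n$), sets $u_j=\Phi*f_j$, and sums. Moreover, a contraction would be delicate here because the map $u\mapsto(\Phi^{\alpha/n}*u^\lambda)u^\sigma$ is not locally Lipschitz when $\lambda<1$ or $\sigma<1$, both of which occur in the theorem's range; and you would need existence on a fixed time interval with data singular enough to beat an \emph{arbitrary} $\varphi$, which pushes a perturbative fixed point past its comfort zone.

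Two further gaps. First, the verification that $Hu=f_j\le C(\Phi^{\alpha/n}*u^\lambda)u^\sigma$ in $\Omega_j$ is where $\sigma>G_\alpha(\lambda)$ actually enters, and it requires a nontrivial \emph{lower} bound on the convolution term. The paper gets this by adding a carefully chosen background to the sum of spikes --- $(M+1)\Phi$ when $\lambda<(n+2)/n$, and $(M+1)\Psi$ with $\Psi=\Phi*|y|^{-2\gamma}$, $\gamma=(n+2-\varepsilon)/(2\lambda)$, when $\lambda\ge(n+2)/n$ --- so that $u\gtrsim\Psi+\beta w_j(t)$ in $\Omega_j$, and then a scaling computation (e.g.\ \eqref{3.40}, \eqref{Q38}) shows $\Phi^{\alpha/n}*u^\lambda\gtrsim T_j^{-(\alpha-\varepsilon)/2}$ there, which combined with $u^\sigma\gtrsim w_j(t)^\sigma$ dominates $f_j$ exactly because of $\sigma>G_\alpha(\lambda)$. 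Your proposal never supplies this lower bound on the convolution, and in particular does not notice that the background must change for $\lambda\ge(n+2)/n$ because $\Phi\notin L^\lambda$ there. Second, in Step~3 you suggest building blocks with $Hv_j=0$ as the default, which cannot work: if $Hu\equiv 0$ then \eqref{1.2} holds for \emph{any} $\sigma,\lambda$, yet Theorem~\ref{thm1.1} caps caloric solutions in $L^\lambda$ by $O(t^{-n/2})$ (resp.\ $o(t^{-(n+2)/(2\lambda)})$), which a fixed $\varphi$ growing faster than any power defeats. The spikes must genuinely have $Hv_j>0$, and the whole content of the theorem is showing that such spikes are compatible with the upper inequality precisely when $\sigma>G_\alpha(\lambda)$.
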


Theorems \ref{thm1.1}--\ref{thm1.3} completely answer Question \ref{ques1} when
the point $(\lambda,\sigma)$ lies below the graph of $g_\alpha$ or
above the graph of $G_\alpha$.  In particular, if $u$ is a nonnegative
solution of \eqref{1.1},\eqref{1.2} where $(\lambda,\sigma)$ lies in the
first quadrant of the $\lambda\sigma$-plane and either $\sigma<
g_\alpha(\lambda)$ or $\sigma> G_\alpha(\lambda)$ then according to
Theorems \ref{thm1.1}--\ref{thm1.3} either
\begin{enumerate}
\item[(i)] $\varphi(t)=t^{-n/2}$ is an optimal a priori pointwise bound for
  $u$ on compact subsets of $\Omega$ as $t\to 0^+$; or
\item[(ii)] $\varphi(t)=t^{-(n+2)/(2\lambda)}$ is an optimal a priori
  pointwise bound for $u$ on compact subsets of $\Omega$ as $t\to
  0^+$; or
\item[(iii)] no pointwise a priori bound exists for $u$ on compact
  subsets of $\Omega$ as $t\to 0^+$, that is solutions can be
  arbitrarily large as $t\to 0^+$.
\end{enumerate}
The regions in which these three possibilities occur are shown in
Figures~\ref{fig1} and \ref{fig2}. Also included in
Figures~\ref{fig1} and \ref{fig2} is an open triangular
region marked with a question mark. For $(\lambda,\sigma)$ in this
region we have no results for Question \ref{ques1}.

Concerning the case that
$(\lambda,\sigma)$ lies on the graph of $g_\alpha$ we have the
following result.

\begin{thm}\label{thm1.4}
 Suppose $\alpha\in(0,n+2)$.
 \begin{enumerate}
 \item[(i)] If $0<\lambda<\frac{n+2-\alpha}{n}$ and $\sigma=g_\alpha
   (\lambda)$ then $\varphi(t)=t^{-n/2}$ is a poinwise bound for
     nonnegative solutions $u$ of \eqref{1.1},\eqref{1.2} on compact
     subsets of $\Omega$ as $t\to 0^+$.
   \item[(ii)] If $\alpha\in(2,n+2)$, $\lambda>\frac{n+2}{\alpha-2}$,
     and $\sigma=g_\alpha(\lambda)$ then there does not exist an a
     priori pointwise bound for nonnegative solutions $u$ of
     \eqref{1.1},\eqref{1.2} on compact subsets of $\Omega$ as $t\to
     0^+$.
 \end{enumerate}
\end{thm}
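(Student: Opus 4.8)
\emph{Part (i).} In the range $0<\lambda<\frac{n+2-\alpha}{n}$ one has $g_\alpha(\lambda)=\frac{n+2}{n}$, so the claim is that the bound $O(t^{-n/2})$ of Theorem~\ref{thm1.1} survives at the critical exponent $\sigma=\frac{n+2}{n}$. My plan is a rescaling (blow-up and vanishing) argument. Fix $x_0\in\Omega$ and set $u_\rho(y,s)=\rho^{n}u(x_0+\rho y,\rho^2 s)$; a direct computation turns \eqref{1.2} into
\[
0\le Hu_\rho\le \rho^{\,2(n+2)-\alpha-n\lambda-n\sigma}\bigl(\Phi^{\alpha/n}*u_\rho^\lambda\bigr)\,u_\rho^{\sigma},
\]
and when $\sigma=\frac{n+2}{n}$ the exponent of $\rho$ equals $n\bigl(\frac{n+2-\alpha}{n}-\lambda\bigr)$, which is \emph{positive precisely because of the hypothesis on $\lambda$}. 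Thus the problem is strictly subcritical on the whole segment $\{0<\lambda<\frac{n+2-\alpha}{n},\ \sigma=g_\alpha(\lambda)\}$. If $u$ were not $O(t^{-n/2})$ on some compact $K\subset\Omega$, then along a sequence $(x_j,t_j)$ with $x_j\to x_0\in K$ and $t_j\to0^+$ we would have $t_j^{n/2}u(x_j,t_j)\to\infty$; choosing the scales $\rho_j$ adaptively by a doubling device, one normalizes the rescaled functions, and --- since the right side carries a positive power of $\rho_j$ and hence disappears in the limit --- the rescalings converge locally to a nonnegative caloric function that is not identically zero. But nonnegative caloric functions are locally bounded, the desired contradiction; unwinding the scaling yields $u(x_0,t)\le C t^{-n/2}$.

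The main obstacle in part (i) is to run this compactness argument for the \emph{nonlocal} inequality: one has to show $\Phi^{\alpha/n}*u_\rho^\lambda$ stays bounded on compact sets along the chosen rescalings --- using $u\in L^\lambda(\R^n\times(0,T))$ and the fact that in this $\lambda$-range the nonlocal factor is tame, as the model identity $(\Phi^{\alpha/n}*\Phi^\lambda)(0,t)=c\,t^{(n+2-\alpha-n\lambda)/2}\to0$ suggests --- obtain uniform interior $C^{2,1}$-bounds for $u_\rho$, and control the rescaled integrability, since $\|u_\rho\|^\lambda_{L^\lambda}=\rho^{\,n(\lambda-1)-2}\,\|u\|^\lambda_{L^\lambda(Q_\rho)}$, with $Q_\rho$ the parabolic cylinder of size $\rho$ based at $(x_0,0)$, is not automatically small. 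The doubling device is exactly what reconciles these competing effects.

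\emph{Part (ii).} Since $\lambda>\frac{n+2}{\alpha-2}>\frac{n+2}{n}>1$, here $g_\alpha(\lambda)=G_\alpha(\lambda)=\max\{0,1-\tfrac{\alpha-2}{n+2}\lambda\}=0$, so the assertion is that $0\le Hu\le\Phi^{\alpha/n}*u^\lambda$ (with $\Omega=\R^n$, $T=1$) admits solutions in the class \eqref{1.1} that are not $O(\varphi(t))$ as $t\to0^+$, for any prescribed continuous $\varphi:(0,1)\to(0,\infty)$ with $\varphi(t)\to\infty$ --- a statement of the same type as Theorem~\ref{thm1.3}, and my plan is to adapt that construction to the boundary case. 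One builds $u=v+\sum_{k}u_k$, where $v(x,t)=\varepsilon\,\Phi(x,t+1)$ is a small, smooth, positive caloric function lying in $L^\lambda(\R^n\times(0,1))$ (this makes $u>0$ everywhere and $Hu=\sum_kHu_k$), and each $u_k$ is a rescaled copy of a fixed ``generator'' $\Psi$: a nonnegative $C^{2,1}$ function, essentially concentrated in a bounded parabolic cylinder, lying in $L^\lambda$, with $0\le H\Psi$ everywhere and $H\Psi\le\Phi^{\alpha/n}*\Psi^\lambda$ on the region where $H\Psi$ lives, normalized to unit size at one interior point --- placed in pairwise well-separated small cylinders around points $(0,a_k)$, $a_k\searrow0$. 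The relevant rescaling is the one making the inequality with $\sigma=0$ scale invariant,
\[
u_k(x,t)=\rho_k^{-2\beta}\,\Psi\!\Bigl(\tfrac{x}{\rho_k},\,\tfrac{t-a_k}{\rho_k^2}+\tau_0\Bigr),\qquad \beta=\frac{n+4-\alpha}{2(\lambda-1)},
\]
since then $Hu_k$ scales like $\rho_k^{-2\beta-2}$ while $\Phi^{\alpha/n}*u_k^\lambda$ scales like $\rho_k^{\,n+2-\alpha-2\beta\lambda}=\rho_k^{-2\beta-2}$ by the choice of $\beta$, so the inequality is inherited cylinder by cylinder, and $u_k(0,a_k)\asymp\rho_k^{-2\beta}\to\infty$ as $\rho_k\to0$.

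Taking $a_k=1/k$ and then $\rho_k$ small enough that $\rho_k^{-2\beta}\ge k\,\varphi(1/k)$ forces $u(0,t)\ne O(\varphi(t))$. The construction is legitimate only if $u\in L^\lambda(\R^n\times(0,1))$; since $\|u_k\|^\lambda_{L^\lambda}\asymp\rho_k^{\,n+2-2\beta\lambda}\|\Psi\|^\lambda_{L^\lambda}$, this reduces to convergence of $\sum_k\rho_k^{\,n+2-2\beta\lambda}$, attainable by further shrinking the $\rho_k$ \emph{exactly when} $n+2-2\beta\lambda>0$, i.e. $\beta<\frac{n+2}{2\lambda}$, i.e. $\lambda>\frac{n+2}{\alpha-2}$ --- the hypothesis. (When $\lambda\le\frac{n+2}{\alpha-2}$ the exponent is $\le0$ and no choice of scales works, consistent with Theorem~\ref{thm1.1}.) One also checks that, by positivity and separation of supports, $\Phi^{\alpha/n}*u^\lambda\ge\sum_k\Phi^{\alpha/n}*u_k^\lambda$ and $Hu=\sum_kHu_k\ge0$, so the cross terms only help and $Hu=0\le\Phi^{\alpha/n}*u^\lambda$ away from the cylinders. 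The main obstacle here is the construction of the generator $\Psi$ itself: since $H\Psi\ge0$ everywhere forces $\Psi$ to have noncompact support (a nonzero, nonnegative, compactly supported $\Psi$ would satisfy $\iint H\Psi=0$, hence $H\Psi\equiv0$, hence $\Psi\equiv0$), one cannot merely truncate a self-similar profile; and where $H\Psi$ is concentrated but small --- near the ``lower edge'' of its support --- one must still secure the pointwise bound $H\Psi\le\Phi^{\alpha/n}*\Psi^\lambda$, although the nonlocal integral is then also small and, in the crudest guess ($\Psi$ a constant times a heat potential of a bump), it vanishes to \emph{higher} order than $H\Psi$ there. Engineering a $\Psi$ whose nonlocal term dominates $H\Psi$ throughout, while staying in $L^\lambda$ and of unit size at an interior point, is the technical crux; with such a $\Psi$ in hand, the gluing and bookkeeping above are routine.
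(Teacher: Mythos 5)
Your Part~(i) sketch is a blow-up/rescaling argument, which is genuinely different from the paper's proof (Theorem~\ref{thm3.1}): there, one picks $\varepsilon$ with $\lambda<\frac{n+2-\alpha}{n+\varepsilon}$, uses the regularity $v^\beta\in L^1$ for $\beta<\frac{n+2}{n}$ from Lemma~\ref{lem2.3}, deduces from Theorem~\ref{thmB.2} that $\Phi^{\alpha/n}*v^\lambda\in L^\infty(\mathcal{P}_8(0,8))$, and thus reduces the nonlocal inequality to the \emph{local} one $0\le Hv\le Cv^\sigma$, after which \cite[Theorem 1.1]{T2011} finishes. Your scaling computation is correct, and you have identified the right mechanism (the exponent of $\rho$ is $n(\frac{n+2-\alpha}{n}-\lambda)>0$). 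But the argument as stated has a gap: to normalize the rescalings you must additionally divide by $M_j:=t_j^{n/2}u(x_j,t_j)\to\infty$, which introduces a prefactor $M_j^{\lambda+\sigma-1}$ with $\lambda+\sigma-1=\lambda+\frac2n>0$, and it is not clear that the ``doubling device'' can simultaneously make $\rho_j^{\,n(\frac{n+2-\alpha}{n}-\lambda)}M_j^{\lambda+\sigma-1}\to0$; this competition is exactly what a doubling lemma fails to control for free, and you do not explain how to resolve it. The paper's route sidesteps this entirely by making the nonlocal factor bounded before any rescaling enters.

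For Part~(ii), your overall architecture (caloric background plus a sum of rescaled, well-separated bumps placed along $(0,a_k)$ with $a_k\searrow 0$) is the same as the paper's Theorem~\ref{thm5.3}, and your computation of $\beta=\frac{n+4-\alpha}{2(\lambda-1)}$ and of the threshold $\lambda>\frac{n+2}{\alpha-2}$ for summability of $\|u_k\|_{L^\lambda}^\lambda$ is correct. However, you have left the one step you yourself identify as ``the technical crux'' — the construction of a generator $\Psi$ with $0\le H\Psi\le\Phi^{\alpha/n}*\Psi^\lambda$ — unproved, and I don't think it can be made to work with the ingredients you've chosen. The issue is that your caloric background $v(x,t)=\varepsilon\Phi(x,t+1)$ is \emph{bounded} on $\R^n\times(0,1)$, so on the $j$-th cylinder (where $t\sim a_j\to0$) it contributes nothing of the right order to the right-hand side. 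The paper instead takes the background to be $(M+1)\Psi$ with $\Psi(x,t)=\int\Phi(x-y,t)|y|^{-2\gamma}\,dy$ and $\gamma=\frac{n+2-\varepsilon}{2\lambda}$ (Theorem~\ref{thm5.2}), which grows like $t^{-\gamma}$ near $t=0$; that growth is exactly what produces the lower bound \eqref{Q38} for the nonlocal integral $\Phi^{\alpha/n}*u^\lambda$ on $\Omega_j$, and is what dominates $Hu=f_j$ there. So the paper's bumps do \emph{not} individually satisfy the inequality — they borrow from the background. Additionally, the paper uses the strictly subcritical exponent $1/p=\frac{n+2-2\varepsilon}{2\lambda}$ (and $\gamma=\frac{n+2-\varepsilon}{2\lambda}$), not the exactly self-similar $\beta$; the small $\varepsilon$ is what keeps everything in $L^\lambda$. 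Your proposal is honest about the gap, but the gap is real, and closing it requires replacing your bounded background by a growing caloric one and slightly detuning the scaling — at which point one essentially recovers the paper's construction.
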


When a pointwise a priori bound as $t\to 0^+$ for nonnegative solutions
$u$ of \eqref{1.1},\eqref{1.2} on compact subsets of $\Omega$ does not
exist, as in Theorems \ref{thm1.3} and \ref{thm1.4}(ii), we prove this
by constructing for any given continuous function
$\varphi:(0,1)\to(0,\infty)$ a nonnegative solution $u$ of
\eqref{1.1},\eqref{1.2} consisting of a sequence of smoothly connected
peaks centered at $(x_j,t_j)$ where $t_j\to 0^+$ such that
\[
u(x_j,t_j)\not= O(\varphi(t_j)) \quad\text{as }j\to\infty.
\]
When such a pointwise a priori bound does exist, as in Theorems
\ref{thm1.1} and \ref{thm1.4}(i), we reduce the proof of this fact to
ruling out the possibility of such peaked solutions.

 If $\alpha\in (0,n+2)$ and $\lambda>0$ then one of the following
 three conditions holds:
\begin{enumerate}
 \item[(i)] $0<\lambda<\frac{n+2-\alpha}{n}$; 
 \item[(ii)] $\frac{n+2-\alpha}{n}\leq\lambda<\frac{n+2}{n}$;
 \item[(iii)] $\frac{n+2}{n}\leq\lambda<\infty$.
\end{enumerate}

The proofs of Theorems \ref{thm1.1}--\ref{thm1.4} in case (i)
(resp.~(ii), (iii)) are given in Section \ref{sec3}
(resp.~\ref{sec4}, \ref{sec5}).  In Section \ref{sec2} we provide some
lemmas needed for these proofs. Our approach relies on an integral
representation formula for nonnegative supertemperatures (see Appendix
\ref{secA}), some integral estimates for heat potentials (see
Appendix \ref{secB}), and Moser's iteration (see Lemmas \ref{lem4.1}
and \ref{lem5.2}).

In this paper, we denote by $\mathcal{P}_r(x,t)$ the open circular
cylinder in $\R^n\times\R$ of radius $\sqrt{r}$, height $r$, and top
center point $(x,t)$. Thus
\[
\mathcal{P}_r(x,t)=\{(y,s)\in \R^n\times\R:|y-x|<\sqrt{r}\text{ and }t-r<s<t\}.
\]


\begin{rem}\label{rem2}
  Note that 
\begin{equation}\label{1.9}
\Phi(x,t)^{\alpha/n}=\frac{1}{(4\pi)^{\alpha/2}}t^{-\alpha/2}
e^{-\frac{\alpha}{4n}\frac{|x|^2}{t}}\chi_{(0,\infty)}(t)\quad\text{in
}\R^n\times\R.
\end{equation}
However, by checking the proofs of our results, we find that Theorems
\ref{thm1.1}, \ref{thm1.3}, and \ref{thm1.4} remain correct if
$\Phi(x,t)^{\alpha/n}$ in \eqref{1.2} is replaced with any function of
the form
\begin{equation}\label{1.10}
C_1(n,\alpha)t^{-\alpha/2}
e^{-C_2(n,\alpha)\frac{|x|^2}{t}}\chi_{(0,\infty)}(t)\quad\text{in
}\R^n\times\R,
\end{equation}
where $C_1(n,\alpha)$ and $C_2(n,\alpha)$ are any given positive
constants.  
In particular, since the fundamental solution $\Phi_\alpha$
of the fractional heat operator
$(\frac{\partial}{\partial t}-\Delta)^{\alpha/2}$, $\alpha\in(0,n+2)$, is given by
\[
\Phi_\alpha(x,t):=\frac{t^{\alpha/2-1}}{\Gamma(\alpha/2)}\Phi(x,t),
\]
where $\Phi$ is the heat kernel \eqref{1.3} (see \cite[Chapter 9,
Section 2]{S2002}), we find for $0<\alpha<n+2$ that
\[
\Phi_{n+2-\alpha}(x,t)=\frac{1}{(4\pi)^{n/2}\Gamma((n+2-\alpha)/2)}t^{-\alpha/2}
e^{-\frac{1}{4}\frac{|x|^2}{t}}\chi_{(0,\infty)}(t)
\]
is of the form \eqref{1.10}. Thus Theorems \ref{thm1.1}, \ref{thm1.3},
and \ref{thm1.4} remain correct if $\Phi^{\alpha/n}$ in \eqref{1.2} is replaced
with $\Phi_{n+2-\alpha}$.
\end{rem}

\section{Preliminary Lemmas}\label{sec2}

\begin{lem}\label{lem2.1}
  Suppose $\alpha\in(0,n+2),\,\lambda>0,\,\sigma\geq0,\,T>0$, and
  $\beta\geq0$ are constants, $\Omega$ is an open subset of
  $\mathbb{R}^n$, and $K$ is a compact subset of $\Omega$, such that
  there exists a nonnegative solution $u$ of \eqref{1.1},\eqref{1.2},
  where the convolution operation in \eqref{1.2} is in
  $\mathbb{R}^n \times(0,T)$, satisfying
 \begin{equation}\label{2.1}
  \max_{x\in K}u(x,t)\neq O(t^{-\beta}),\,\quad (\text{resp. }o(t^{-\beta}))\quad\text{as }t\to0^+ . 
 \end{equation}
 Then there exists a nonnegative function $v(\xi,\tau)$ such that
 \begin{equation}\label{2.2}
   v\in C^{2,1}(\mathbb{R}^n \times(0,16))
\cap L^\lambda (\mathbb{R}^n \times(0,16)),
 \end{equation}
 \begin{equation}\label{2.3}
  0\leq Hv\leq(\Phi^{\alpha/n}*v^\lambda )v^\sigma \quad\text{in }B_4 (0)\times(0,16),
 \end{equation}
 where $*$ is the convolution operation in $\mathbb{R}^n \times(0,16)$, and 
 \begin{equation}\label{2.4}
  \max_{|\xi|\leq1}v(\xi,\tau)\neq O(\tau^{-\beta}),
\quad (\text{resp. }o(\tau^{-\beta}))\quad\text{as }\tau\to 0^+ .
 \end{equation}
\end{lem}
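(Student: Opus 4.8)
The plan is to obtain $v$ from $u$ by translating in space to a point where $u$ violates \eqref{2.1} and then applying a parabolic dilation by a fixed, sufficiently small factor $\mu$, chosen so that the dilated space–time box still sits inside $\Omega\times(0,T)$. First I would unwind \eqref{2.1}: writing $M(t)=\max_{x\in K}u(x,t)$ and picking $x_t\in K$ with $u(x_t,t)=M(t)$ (the maximum is attained since $u(\cdot,t)$ is continuous and $K$ is compact), the hypothesis produces a sequence $t_j\to0^+$ with $t_j^{\beta}u(x_j,t_j)\to\infty$ in the ``$\neq O$'' case, and $t_j^{\beta}u(x_j,t_j)\ge\varepsilon_0$ for some $\varepsilon_0>0$ in the ``$\neq o$'' case, where $x_j:=x_{t_j}$. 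By compactness of $K$, after passing to a subsequence $x_j\to x_0\in K\subset\Omega$. Set $d:=\operatorname{dist}(K,\R^n\setminus\Omega)\in(0,\infty]$ and fix $\mu\in(0,1]$ small enough that $16\mu\le T$ and $4\sqrt{\mu}\le d$. Define
\[
v(\xi,\tau):=u\bigl(x_0+\sqrt{\mu}\,\xi,\ \mu\tau\bigr)\qquad\text{for }(\xi,\tau)\in\R^n\times(0,16).
\]
Since $u\in C^{2,1}(\R^n\times(0,T))$ and $0<\mu\tau<16\mu\le T$, $v$ is $C^{2,1}$ there and nonnegative, and the substitution $y=x_0+\sqrt\mu\,\eta$, $s=\mu\zeta$ gives $\iint_{\R^n\times(0,16)}v^\lambda=\mu^{-(n/2+1)}\iint_{\R^n\times(0,16\mu)}u^\lambda<\infty$; thus \eqref{2.2} holds.

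For the differential inequality, a direct computation gives $Hv(\xi,\tau)=\mu\,(Hu)\bigl(x_0+\sqrt\mu\,\xi,\mu\tau\bigr)\ge0$. For the upper bound I would perform the same change of variables inside the convolution $(\Phi^{\alpha/n}*u^\lambda)(x_0+\sqrt\mu\,\xi,\mu\tau)$ (taken over $\R^n\times(0,T)$). Using the scaling identity $\Phi(\sqrt\mu\,a,\mu b)=\mu^{-n/2}\Phi(a,b)$, hence $\Phi^{\alpha/n}(\sqrt\mu\,a,\mu b)=\mu^{-\alpha/2}\Phi^{\alpha/n}(a,b)$, one gets
\[
(\Phi^{\alpha/n}*u^\lambda)\bigl(x_0+\sqrt\mu\,\xi,\mu\tau\bigr)=\mu^{\,n/2+1-\alpha/2}\iint_{\R^n\times(0,T/\mu)}\Phi^{\alpha/n}(\xi-\eta,\tau-\zeta)\,v(\eta,\zeta)^{\lambda}\,d\eta\,d\zeta.
\]
For $\tau<16\le T/\mu$ the integrand vanishes whenever $\zeta\ge\tau$ because $\Phi(x,t)=0$ for $t\le0$, so the time–interval of integration may be replaced by $(0,16)$ without altering the value; the right-hand side therefore equals $\mu^{\,n/2+1-\alpha/2}(\Phi^{\alpha/n}*v^\lambda)(\xi,\tau)$, the convolution now taken over $\R^n\times(0,16)$. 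Combining this with $u(x_0+\sqrt\mu\,\xi,\mu\tau)^\sigma=v(\xi,\tau)^\sigma$ and noting that for $(\xi,\tau)\in B_4(0)\times(0,16)$ the point $(x_0+\sqrt\mu\,\xi,\mu\tau)$ lies in $\Omega\times(0,T)$ (this is where $4\sqrt\mu\le d$ enters), we obtain
\[
0\le Hv(\xi,\tau)\le\mu^{\,2+(n-\alpha)/2}\,(\Phi^{\alpha/n}*v^\lambda)(\xi,\tau)\,v(\xi,\tau)^\sigma\le(\Phi^{\alpha/n}*v^\lambda)(\xi,\tau)\,v(\xi,\tau)^\sigma
\]
in $B_4(0)\times(0,16)$; the last step uses that $\alpha<n+2$ forces the exponent $2+(n-\alpha)/2$ to be positive and that $\mu\le1$. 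This is \eqref{2.3}.

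Finally, for \eqref{2.4}, note that $\{x_0+\sqrt\mu\,\xi:|\xi|\le1\}=\overline{B_{\sqrt\mu}(x_0)}$, and since $x_j\to x_0$ we have $x_j\in\overline{B_{\sqrt\mu}(x_0)}$ for all large $j$. Thus, with $\tau_j:=t_j/\mu\to0^+$,
\[
\tau_j^{\beta}\max_{|\xi|\le1}v(\xi,\tau_j)=\mu^{-\beta}\,t_j^{\beta}\max_{x\in\overline{B_{\sqrt\mu}(x_0)}}u(x,t_j)\ge\mu^{-\beta}\,t_j^{\beta}u(x_j,t_j),
\]
which tends to $\infty$ (resp.\ remains $\ge\mu^{-\beta}\varepsilon_0>0$); hence $\max_{|\xi|\le1}v(\xi,\tau)\neq O(\tau^{-\beta})$ (resp.\ $\neq o(\tau^{-\beta})$) as $\tau\to0^+$, which is \eqref{2.4}.

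The one genuinely delicate point is the rescaling of the nonlocal term: one must keep track of the power of $\mu$ generated by the parabolic change of variables and check that $\alpha<n+2$ makes it favorable, and one must observe that shrinking the time–interval of the convolution from $(0,T/\mu)$ down to $(0,16)$ costs nothing because $\Phi(x,t-s)$ is supported in $\{s<t\}$. A secondary subtlety is that \eqref{2.1} only records a failure of the bound over all of $K$, whereas \eqref{2.4} demands it on the single small ball $\overline{B_{\sqrt\mu}(x_0)}$; this is precisely why $x_0$ is taken to be an accumulation point of the maximizers $x_j$. Everything else is routine bookkeeping.
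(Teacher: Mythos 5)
Your proof is correct and follows essentially the same route as the paper: fix $x_0$ as an accumulation point of the maximizers $x_j$, choose a parabolic dilation small enough that the rescaled cylinder sits inside $\Omega\times(0,T)$, and track the power of the scaling parameter through the nonlocal term, observing that $\alpha<n+2$ makes that power nonnegative and that the support of $\Phi(\cdot,t-s)$ in $\{s<t\}$ allows the time interval of the convolution to be shortened to $(0,16)$. The paper additionally inserts a multiplicative constant $b$ (defining $u=bv$) and records the condition $b^{\lambda+\sigma-1}<r^{-(n+4-\alpha)}$, but since $r\in(0,1)$ and $n+4-\alpha>0$ this is automatically satisfied by $b=1$, which is exactly your choice, so there is no substantive difference.
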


\begin{proof}
 It follows from \eqref{2.1} and the compactness of $K$ that there
 exist a sequence $\{(x_j ,t_j )\}\subset K\times(0,T)$ and $x_0 \in K$ such that
 \begin{equation}\label{2.5}
  (x_j ,t_j )\to(x_0 ,0)\quad\text{as }j\to\infty
 \end{equation}
 and
 \begin{equation}\label{2.6}
  u(x_j ,t_j )\neq O(t^{-\beta}_{j})\quad (\text{resp. }o(t^{-\beta}_{j}))\quad\text{as }j\to\infty .
 \end{equation}
 Choose $r\in(0,1)$ and $b>0$ such that
 \begin{equation}\label{2.7}
  \overline{B_{4r}(x_0 )}\times(0,16r^2 )\subset\Omega\times (0,T)
 \end{equation}
 and
 \begin{equation}\label{2.8}
  b^{\lambda+\sigma-1}<r^{-(n+4-\alpha)}.
 \end{equation}
 Define $v(\xi,\tau)$ by $u(x,t)=bv(\xi,\tau)$ where $x=x_0 +r\xi$ and $t=r^2 \tau$ and
 define $(\xi_j ,\tau_j )$ by $x_j =x_0 +r\xi_j$ and
 $t_j =r^2 \tau_j$.  Then by \eqref{2.5}
 \begin{equation}\label{2.9}
  (\xi_j ,\tau_j )\to(0,0)\quad\text{as }j\to\infty.
 \end{equation}
 Clearly $(x,t)\in\mathbb{R}^n \times(0,16r^2 )$ if and only if $(\xi,\tau)\in\mathbb{R}^n \times(0,16)$.  Also $16r^2 \leq T$ by \eqref{2.7}.  It therefore follows from \eqref{1.1} that \eqref{2.2} holds.
 
 For $(x,t)\in \mathcal{P}_{16r^2}(x_0 ,16r^2 )$ (i.e. $(\xi,\tau)\in
 \mathcal{P}_{16}(0,16)$) we have under the change of variables $y=x_0 +r\eta,\,s=r^2 \zeta$ that 
 $$\iint_{\mathbb{R}^n \times(0,T)}\Phi(x-y,t-s)^{\alpha/n}u(y,s)^\lambda\,dy\,ds=\frac{b^\lambda r^{n+2}}{r^\alpha}\iint_{\mathbb{R}^n \times(0,16)}\Phi(\xi-\eta,\tau-\zeta)^{\alpha/n}v(\eta,\zeta)^\lambda \,d\eta\,d\zeta$$
 where in the last integral we were able to replace the region of integration $\mathbb{R}^n \times(0,T/r^2 )$ with $\mathbb{R}^n\times(0,16)$ because $\tau<16\leq T/r^2$ and $\Phi(x,t)=0$ 
 for $t<0$.  Thus by \eqref{1.2} and \eqref{2.8} we find that $v$ satisfies \eqref{2.3}.
 
 Finally by \eqref{2.6} we have 
 $$\tau^{\beta}_{j}v(\xi_j ,\tau_j
 )=\left(\frac{t_j}{r^2}\right)^\beta \frac{1}{b}u(x_j ,t_j
 )\neq O(1)\quad (\text{resp. } o(1))\quad\text{as }j\to\infty$$
 which together with \eqref{2.9} implies \eqref{2.4}.
\end{proof}

\begin{rem}\label{rem2.1}
  Suppose $\alpha$, $\lambda$, $\sigma$, $T$, $\beta$, $\Omega$,
  and $K$ are as in Lemma \ref{lem2.1}.  Then in order to show that
  all nonnegative solutions $u$ of \eqref{1.1},\eqref{1.2} satisfy
 $$\max_{x\in K}u(x,t)=O(t^{-\beta})\quad (\text{resp. }o(t^{-\beta}))\quad\text{as }t\to 0^+$$
 it suffices by Lemma \ref{lem2.1} to show that all nonnegative solutions $u(x,t)$ of
 \begin{equation}\label{2.10}
  u\in C^{2,1}(\mathbb{R}^n \times(0,16))\cap L^\lambda (\mathbb{R}^n \times(0,16))
 \end{equation}
 and
 \begin{equation}\label{2.11}
  0\leq Hu\leq(\Phi^{\alpha/n}*u^\lambda )u^\sigma \quad\text{in }B_4 (0)\times(0,16),
 \end{equation}
 where $*$ is the convolution operation in $\mathbb{R}^n \times(0,16)$, satisfy
 $$\max_{|x|\leq1}u(x,t)=O(t^{-\beta})\quad (\text{resp. }o(t^{-\beta}))\quad\text{as }t\to 0^+.$$
\end{rem}

Throughout this paper we will repeatedly use the following simple lemma.

\begin{lem}\label{lem2.2}
 If $\gamma>0$ and $x\in\mathbb{R}^n ,\,n\geq1$, then
 $$\int_{|x-y|<r}e^{-\gamma|x-y|^2}dy=\gamma^{-n/2}\int_{|z|<\sqrt{\gamma}r}e^{-|z|^2}dz$$
 and
 $$\int_{|x-y|>r}e^{-\gamma|x-y|^2}dy=\gamma^{-n/2}\int_{|z|>\sqrt{\gamma}r}e^{-|z|^2}dz.$$
 In particular
 $$\int_{\mathbb{R}^n}e^{-\gamma|x-y|^2}dy=C(n)\gamma^{-n/2}.$$
\end{lem}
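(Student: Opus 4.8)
The plan is to prove Lemma \ref{lem2.2} by a direct change of variables, treating all three displayed identities simultaneously.

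First I would fix $\gamma>0$ and $x\in\R^n$ and introduce the substitution $z=\sqrt{\gamma}\,(y-x)$, so that $y=x+z/\sqrt{\gamma}$ and $dy=\gamma^{-n/2}\,dz$. Under this map, the Gaussian weight transforms cleanly as $e^{-\gamma|x-y|^2}=e^{-|z|^2}$, since $|x-y|^2=|z/\sqrt{\gamma}|^2=|z|^2/\gamma$ and hence $\gamma|x-y|^2=|z|^2$. The key point is that the substitution also transforms the domains of integration correctly: the condition $|x-y|<r$ becomes $|z/\sqrt{\gamma}|<r$, i.e. $|z|<\sqrt{\gamma}\,r$, and likewise $|x-y|>r$ becomes $|z|>\sqrt{\gamma}\,r$. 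Substituting into $\int_{|x-y|<r}e^{-\gamma|x-y|^2}\,dy$ then yields $\gamma^{-n/2}\int_{|z|<\sqrt{\gamma}\,r}e^{-|z|^2}\,dz$, and the same computation with the reversed inequality gives the second identity.

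For the final ``in particular'' assertion I would let $r\to\infty$ in the first identity (or simply apply the change of variables directly over all of $\R^n$), obtaining $\int_{\R^n}e^{-\gamma|x-y|^2}\,dy=\gamma^{-n/2}\int_{\R^n}e^{-|z|^2}\,dz$, and then set $C(n):=\int_{\R^n}e^{-|z|^2}\,dz=\pi^{n/2}$, which is finite and depends only on $n$. This establishes the stated form with a dimensional constant.

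There is no real obstacle here — the lemma is a routine scaling computation, and the only thing to be careful about is that the change of variables is a diffeomorphism of $\R^n$ (translation by $x$ followed by dilation by $\sqrt{\gamma}$, which is invertible precisely because $\gamma>0$), so the transformation of the region is exact and the Jacobian factor $\gamma^{-n/2}$ is constant. The utility of the lemma for later arguments is that it isolates exactly this Gaussian scaling, which recurs whenever one estimates convolutions against powers of the heat kernel \eqref{1.9}.
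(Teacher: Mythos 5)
Your proof is correct and takes essentially the same approach as the paper, which also proceeds by the single change of variables $z=\sqrt{\gamma}(x-y)$ (the sign difference from your $z=\sqrt{\gamma}(y-x)$ is immaterial). Your elaboration of the domain transformation and the Jacobian is accurate; the paper simply leaves these routine details to the reader.
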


\begin{proof}
 Make the change of variables $z=\sqrt{\gamma}(x-y)$.
\end{proof}

\begin{lem}\label{lem2.3}
 Suppose for some constants $\alpha\in(0,n+2),\,\lambda>0$, and $\sigma\geq0$, the function $u$ is a nonnegative solution of \eqref{2.10},\eqref{2.11} where $*$ is the convolution operation in $\mathbb{R}^n \times(0,16)$.  Set $v=u+1$.  Then
 \begin{equation}\label{2.12}
  v\in C^{2,1}(\mathbb{R}^n \times(0,16))\cap L^\lambda (B_{\sqrt{8}}(0)\times(0,8))
 \end{equation}
 and for some positive constant $C$, $v$ satisfies
 \begin{equation}\label{2.13}
  \begin{rcases}
   0\leq Hv\leq C(\Phi^{\alpha/n}*v^\lambda )v^\sigma\\
   v\geq1
  \end{rcases} \quad\text{in }B_2 (0)\times(0,8)
 \end{equation}
 where $*$ is the convolution operation in $B_{\sqrt{8}}(0)\times(0,8)$.  Also
 \begin{equation}\label{2.14}
  Hv,\,v^\beta \in L^1 (B_{\sqrt{8}}(0)\times(0,8))\quad
\text{for all }\beta\in\left[1,\frac{n+2}{n}\right)
 \end{equation}
 and there exists a positive finite Borel measure $\mu$ on
 $B_{\sqrt{8}}(0)$ and a bounded function\\
 $h\in C^{2,1}(B_2 (0)\times(-4,4))$ satisfying
 \begin{align*}
  H&h=0\quad\text{in }B_2 (0)\times(-4,4)\\
  &h=0\quad\text{in }B_2 (0)\times(-4,0]
 \end{align*}
 such that
 \begin{equation}\label{2.15}
  v(x,t)=h(x,t)+\int^{8}_{0}\int_{|y|<\sqrt{8}}\Phi(x-y,t-s)Hv(y,s)\,dy\,ds+\int_{|y|<\sqrt{8}}\Phi(x-y,t)\,d\mu(y)
 \end{equation}
for $(x,t)\in B_2 (0)\times(0,4)$.
\end{lem}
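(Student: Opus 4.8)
\textbf{Proof plan for Lemma \ref{lem2.3}.}

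The strategy is to exploit the fact that $u$ is a nonnegative supertemperature (since $Hu\ge 0$) on $B_2(0)\times(0,8)$ and apply the integral representation formula for nonnegative supertemperatures from Appendix \ref{secA} to $v=u+1$. First I would record the elementary facts: $v=u+1$ inherits $C^{2,1}$ regularity from $u$, and $Hv=Hu$, so the left inequality $Hv\ge 0$ in \eqref{2.13} is immediate. For the right inequality, since $v\ge 1\ge u$ and $\lambda,\sigma\ge 0$ are such that $v^\lambda\ge u^\lambda$ and $v^\sigma\ge u^\sigma$, we get $Hu\le(\Phi^{\alpha/n}*u^\lambda)u^\sigma\le(\Phi^{\alpha/n}*v^\lambda)v^\sigma$ where the convolution over $\mathbb{R}^n\times(0,16)$ dominates the convolution over the smaller set $B_{\sqrt 8}(0)\times(0,8)$ only up to a point — here one must be slightly careful and instead argue that on the cylinder $B_2(0)\times(0,8)$ the part of the $\mathbb{R}^n\times(0,16)$-convolution coming from outside $B_{\sqrt 8}(0)\times(0,8)$ is controlled, using that $u\in L^\lambda(\mathbb{R}^n\times(0,16))$ and the Gaussian decay of $\Phi^{\alpha/n}$, giving a constant $C$ so that \eqref{2.13} holds. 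The bound $v\ge 1$ is trivial.

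Next I would establish the $L^1$ bounds \eqref{2.14}. The key point is that $v^\beta\in L^1(B_{\sqrt 8}(0)\times(0,8))$ for $\beta\in[1,(n+2)/n)$: this should follow from $v^\lambda\in L^1$ on a slightly larger set (which in turn follows from $u\in L^\lambda$ and $v=u+1$ with the domain bounded) when $\lambda\ge 1$, together with the fact that $\Phi^{\alpha/n}*v^\lambda$ is finite a.e.\ and locally integrable — then $Hv\le C(\Phi^{\alpha/n}*v^\lambda)v^\sigma$ is a product of an $L^p$ function and an $L^q$ function with suitable exponents, giving $Hv\in L^1$; combined with $v\ge 1$ and the representation formula this bootstraps to $v^\beta\in L^1$ for the stated range of $\beta$. (When $\lambda<1$ one uses instead that the domain is bounded so $L^\lambda\supset L^1$ fails the wrong way, but $v\ge 1$ forces $v^\lambda\le v$, hence $v\in L^1$ on the bounded cylinder, which is what is really needed.) The precise exponent $(n+2)/n$ is the parabolic Sobolev-type threshold at which the heat potential of an $L^1$ density lies in $L^\beta_{loc}$, and this is exactly what Appendix \ref{secB} should provide.

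Finally, the representation \eqref{2.15}. Since $v$ is a nonnegative supertemperature on $B_2(0)\times(0,8)$ with $Hv\in L^1$, the Riesz-type decomposition for supertemperatures (Appendix \ref{secA}) writes $v$ on the cylinder $B_2(0)\times(0,4)$ as the sum of (a) a nonnegative temperature $h$ — the "harmonic" part, which extends to $B_2(0)\times(-4,4)$, is bounded there, vanishes for $t\le 0$, and satisfies $Hh=0$ — plus (b) the heat potential of the measure $Hv\,dy\,ds$ restricted to $B_{\sqrt 8}(0)\times(0,8)$, plus (c) the heat potential $\int\Phi(x-y,t)\,d\mu(y)$ of the initial trace $\mu$, a positive finite Borel measure on $B_{\sqrt 8}(0)$ arising as the weak-$*$ limit of $v(\cdot,t)\,dx$ as $t\to 0^+$; finiteness of $\mu$ follows from a uniform $L^1$ bound on $v(\cdot,t)$ which itself comes from \eqref{2.14} via Fubini. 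The main obstacle is the bookkeeping in step (b)–(c): one must choose the spatial ball $B_{\sqrt 8}(0)$ and time interval $(0,8)$ large enough that the representation is valid on the smaller cylinder $B_2(0)\times(0,4)$ with $h$ genuinely caloric (all the "boundary" contributions from $\partial(B_{\sqrt 8}(0)\times(0,8))$ being absorbed into $h$), yet small enough that the $L^\lambda$ integrability of $u$ on $\mathbb{R}^n\times(0,16)$ is not exceeded — this nesting of cylinders, and verifying that $h$ is bounded on $B_2(0)\times(-4,4)$ (rather than merely locally bounded), is where the care is required; the rest is a direct invocation of the appendices.
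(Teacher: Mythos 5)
Your overall strategy is correct: split the convolution domain to get \eqref{2.13}, then invoke the supertemperature representation theorem of Appendix~\ref{secA} for \eqref{2.14} and \eqref{2.15}. The \eqref{2.13} step is essentially the paper's, though you should make explicit the other half of the comparison: the outside contribution is bounded above by $C(e^{-1/(8t)}/t^{n/2})^{\alpha/n}\|u\|_{L^\lambda}^\lambda$, and this in turn is dominated by the \emph{inside} integral $\iint_{B_{\sqrt 8}(0)\times(0,8)}\Phi^{\alpha/n}\cdot 1\,dy\,ds\ge Ct^{(n+2-\alpha)/2}$ (using $v^\lambda\ge 1$); it is that lower bound, not merely the decay of the outside tail, that produces the constant $C$ in \eqref{2.13}.

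There is, however, a genuine gap in your route to \eqref{2.14}. You propose to first prove $Hv\in L^1(B_{\sqrt 8}(0)\times(0,8))$ by applying H\"older's inequality to the product $(\Phi^{\alpha/n}*v^\lambda)v^\sigma$ coming from \eqref{2.13}, and only then to invoke the representation formula. That step would not go through: at that point in the argument you have no control whatsoever on $v^\sigma$ for general $\sigma\ge 0$ (the only a priori integrability is $u\in L^\lambda$, and $\sigma$ is unrelated to $\lambda$), and the heat potential $\Phi^{\alpha/n}*v^\lambda$ need not lie in any useful $L^q$ before the rest of the lemma is established. More to the point, this detour is unnecessary: Theorem~\ref{thmA} (with $R_1=4$, $R_2=8$, $R_3=16$, and Remark~\ref{remA} to pass from $R_1$ to $R_2$) requires \emph{only} that $v$ be a nonnegative $C^{2,1}$ supertemperature on $B_4(0)\times(0,16)$, i.e.\ $Hv=Hu\ge 0$, and it \emph{delivers} $Hv\in L^1$, $v^\beta\in L^1$ for $1\le\beta<(n+2)/n$, and the representation \eqref{2.15} all at once as conclusions. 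You have the implication backwards: $Hv\in L^1$ is an output of the representation theorem, not a hypothesis you must verify before using it. Once you reorganize the argument so that Theorem~\ref{thmA} is invoked directly after establishing $Hv\ge 0$, the rest of your sketch (in particular the description of $h$, $\mu$, and the nested cylinders) is correct.
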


\begin{rem}\label{rem2.2}
 Under the assumptions of Lemma \ref{lem2.3} we have
 $$(4\pi t)^{n/2}\int_{|y|<\sqrt{8}}\Phi(x-y,t)\,d\mu(y)\leq\int_{|y|<\sqrt{8}}d\mu(y)<\infty\quad\text{for }(x,t)\in\mathbb{R}^n \times(0,\infty).$$
 Thus by \eqref{2.15} we see that
 $$v(x,t)\leq C\left(\left(\frac{1}{\sqrt{t}}\right)^n +\int^{8}_{0}\int_{|y|<\sqrt{8}}\Phi(x-y,t-s)Hv(y,s)\,dy\,ds\right)\quad\text{for }(x,t)\in B_2 (0)\times(0,4).$$
\end{rem}

\begin{proof}[Proof of Lemma \ref{lem2.3}.] Clearly \eqref{2.10} implies \eqref{2.12}.  For
$$(x,t)\in B_2 (0)\times(0,8), \quad (y,s)\in(\mathbb{R}^n \times(0,16))\backslash(B_{\sqrt{8}}(0)\times(0,8)),$$
and $s<t$ we have $|x-y|>\sqrt{8}-2>1/\sqrt{2}$ and thus
\begin{align*}
 \Phi(x-y,t-s)&\leq\frac{1}{(4\pi(t-s))^{n/2}}e^{-\frac{1}{8(t-s)}}\\
 &\leq\sup_{0<\tau<t}\frac{e^{-\frac{1}{8\tau}}}{(4\pi\tau)^{n/2}}\leq C\frac{e^{-\frac{1}{8t}}}{t^{n/2}}.
\end{align*}
Hence for $(x,t)\in B_2 (0)\times(0,8)$ we have
\begin{align}\label{2.16}
 \notag \iint_{\mathbb{R}^n \times(0,16)\backslash
  B_{\sqrt{8}}(0)\times(0,8)}&
\Phi(x-y,t-s)^{\alpha/n}u(y,s)^\lambda
  \,dy\,ds\\
\leq C&\left(\frac{e^{-\frac{1}{8t}}}{t^{n/2}}\right)^{\alpha/n}\iint_{\mathbb{R}^n \times(0,16)}u(y,s)^\lambda \,dy\,ds
 \leq C\left(\frac{e^{-\frac{1}{8t}}}{t^{n/2}}\right)^{\alpha/n}
\end{align}
by \eqref{2.10}.

On the other hand, for $(x,t)\in B_2 (0)\times(0,8)$ we have $B_R (x)\subset B_{\sqrt{8}}(0)$ where $R=\sqrt{8}-2$ and thus by Lemma \ref{lem2.2} we find that
\begin{align}
 \notag
  \iint_{B_{\sqrt{8}}(0)\times(0,8)}\Phi(x-y,t-s)^{\alpha/n}\,dy\,ds
&\geq\int^{t}_{0}\left(\int_{B_R (x)}\Phi(y-x,t-s)^{\alpha/n}dy\right)ds\\
 \notag &=\int^{t}_{0}\int_{B_R
          (x)}\Phi(y-x,\tau)^{\alpha/n}dy\,d\tau\\
\notag &=\int_0^t\frac{1}{(4\pi\tau)^{\alpha/2}}
\left(\int_{|y-x|<R}e^{-\frac{\alpha}{4n\tau}|y-x|^2}dy\right)d\tau\\
\notag &=C\int^{t}_{0}\tau^{\frac{n-\alpha}{2}}\left(\int_{|z|<R\sqrt{\frac{\alpha}{4n\tau}}}e^{-|z|^2}dz\right)d\tau\\
 \label{2.17}&\geq C\int^{t}_{0}\tau^{\frac{n-\alpha}{2}}d\tau=Ct^{\frac{n+2-\alpha}{2}}
 \geq C\left(\frac{e^{-\frac{1}{8t}}}{t^{n/2}}\right)^{\alpha/n}.
\end{align}
Hence for $(x,t)\in B_2 (0)\times(0,8)$ we obtain from \eqref{2.16} and \eqref{2.17} that
\begin{align*}
 &\iint_{\mathbb{R}^n \times(0,16)}\Phi(x-y,t-s)^{\alpha/n}u(y,s)^\lambda \,dy\,ds\\
 &\leq\iint_{B_{\sqrt{8}}(0)\times(0,8)}\Phi(x-y,t-s)^{\alpha/n}u(y,s)^\lambda
   \,dy\,ds
+C\iint_{B_{\sqrt{8}}(0)\times(0,8)}\Phi(x-y,t-s)^{\alpha/n}1^\lambda \,dy\,ds\\
 &\leq C \iint_{B_{\sqrt{8}}(0)\times(0,8)}\Phi(x-y,t-s)^{\alpha/n}v(y,s)^\lambda \,dy\,ds.
\end{align*}
Thus, since $u$ satisfies \eqref{2.11} we see that $v$ satisfies
\eqref{2.13}.  Finally, by \eqref{2.11}, $Hv\ge 0$ in
$B_4 (0)\times(0,16)$.  Hence Theorem \ref{thmA} and Remark \ref{remA}
with $R_1=4$, $R_2=8$, and $R_3=16$ imply \eqref{2.14} and
\eqref{2.15}.
\end{proof}

The following lemma will be needed to estimate the last integral in
\eqref{2.15}.

\begin{lem}\label{lem2.4}
 Suppose
 \begin{equation}\label{2.18}
  u\in L^p (\Omega \times(0,T))
 \end{equation}
 for some open subset $\Omega$ of $\mathbb{R}^n ,\,n\geq1$, and some constants $p\in[1,\infty)$ and $T>0$.  Assume also that 
 $$u(x,t)=\int_{\mathbb{R}^n}\Phi(x-y,t)\, d\mu(y)$$
 for some finite positve Borel measure $\mu$ on $\mathbb{R}^n$.  Then
 for each compact subset $K$ of $\Omega$ we have
 \begin{equation}\label{2.19}
  \max_{x\in K}u(x,t)=o\left(t^{-\frac{n+2}{2p}}\right)\quad\text{as }t\to 0^+ .
 \end{equation}
\end{lem}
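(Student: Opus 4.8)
The plan is to exploit the reproducing (semigroup) property of the heat kernel together with Hölder's inequality, averaging over a short time interval so that only the $L^p$ norm of $u$ over a set of vanishing measure enters the estimate. Set $M:=\mu(\mathbb{R}^n)<\infty$, so that $u(x,t)=\int_{\mathbb{R}^n}\Phi(x-y,t)\,d\mu(y)\le(4\pi t)^{-n/2}M<\infty$. I would first record two elementary facts obtained from Fubini's theorem (all integrands being nonnegative) and the identities $\int_{\mathbb{R}^n}\Phi(\cdot,\tau)\,dy=1$ and $\int_{\mathbb{R}^n}\Phi(x-\zeta,\sigma)\Phi(\zeta-y,\tau)\,d\zeta=\Phi(x-y,\sigma+\tau)$: namely, $\int_{\mathbb{R}^n}u(\zeta,\tau)\,d\zeta=M$ for every $\tau>0$, and the reproducing formula $u(x,t)=\int_{\mathbb{R}^n}\Phi(x-\zeta,\sigma)\,u(\zeta,t-\sigma)\,d\zeta$ for all $0<\sigma<t$.

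Now fix a compact set $K\subset\Omega$ and choose $\rho>0$ so small that $K_\rho:=\{x\in\mathbb{R}^n:\operatorname{dist}(x,K)\le\rho\}\subset\Omega$. For $x\in K$ and $0<t<T$, average the reproducing formula over $\sigma\in(t/2,t)$ to get
$$u(x,t)=\frac{2}{t}\iint_{\mathbb{R}^n\times(t/2,t)}\Phi(x-\zeta,\sigma)\,u(\zeta,t-\sigma)\,d\zeta\,d\sigma ,$$
and split the $\zeta$-integral into the region $K_\rho$ and its complement. On the complement we have $|x-\zeta|>\rho$, so $\Phi(x-\zeta,\sigma)\le(4\pi\sigma)^{-n/2}e^{-\rho^2/(4\sigma)}$, which for $\sigma<t$ is bounded by a constant (depending on $n,\rho$) times $e^{-\rho^2/(8t)}$; combined with $\int_{\mathbb{R}^n}u(\cdot,t-\sigma)\,d\zeta=M$ this term contributes at most $C(n,\rho)\,M\,e^{-\rho^2/(8t)}$, which is $o(t^{k})$ for every $k$. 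For the part over $K_\rho$, apply Hölder's inequality in $(\zeta,\sigma)$ on $K_\rho\times(t/2,t)$ with exponents $p$ and $p'$ (reading $p'=\infty$ and $L^{p'}=L^\infty$ when $p=1$). By Lemma~\ref{lem2.2}, $\int_{\mathbb{R}^n}\Phi(x-\zeta,\sigma)^{p'}\,d\zeta=C(n,p')\,\sigma^{-\frac n2(p'-1)}$; integrating this over $\sigma\in(t/2,t)$ and using $\tfrac{p'-1}{p'}=\tfrac1p$ gives, for $0<t<1$,
$$\Bigl(\iint_{K_\rho\times(t/2,t)}\Phi(x-\zeta,\sigma)^{p'}\,d\zeta\,d\sigma\Bigr)^{1/p'}\le C\,t^{\,1-\frac{n+2}{2p}} .$$
The remaining factor is $\bigl(\iint_{K_\rho\times(t/2,t)}u(\zeta,t-\sigma)^p\,d\zeta\,d\sigma\bigr)^{1/p}=\bigl(\int_0^{t/2}\int_{K_\rho}u(\zeta,\sigma)^p\,d\zeta\,d\sigma\bigr)^{1/p}=:\eta(t)^{1/p}$, and since $u^p\in L^1(K_\rho\times(0,T))$ while $K_\rho\times(0,t/2)$ shrinks to a set of measure zero, dominated convergence gives $\eta(t)\to0$ as $t\to0^+$.

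Combining the two parts yields, uniformly for $x\in K$,
$$u(x,t)\ \le\ C\,t^{-\frac{n+2}{2p}}\,\eta(t)^{1/p}\ +\ C(n,\rho)\,M\,e^{-\rho^2/(8t)} ,$$
so multiplying by $t^{(n+2)/(2p)}$ and letting $t\to0^+$ gives $\max_{x\in K}u(x,t)=o\bigl(t^{-(n+2)/(2p)}\bigr)$, which is \eqref{2.19}. I expect the only delicate points to be the bookkeeping of exponents in the Hölder step and, more importantly, the realization that one should \emph{average} the reproducing formula over $(t/2,t)$ rather than evaluate it at a single time: this is exactly what keeps the singular factor $\Phi^{p'}$ integrable for every $p\ge1$ (the integral $\int_{t/2}^t\sigma^{-\frac n2(p'-1)}\,d\sigma$ is harmless since $\sigma$ stays away from $0$) while letting the vanishing of $\int_0^{t/2}\int_{K_\rho}u^p$ supply the little-$o$. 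Beyond this observation there is no substantial analytic obstacle.
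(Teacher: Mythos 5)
Your proof is correct, but it takes a genuinely different route from the paper's. The paper localizes first (reducing to the case $\Omega=B_{3r}(x_0)$, $K=\overline{B_r(x_0)}$, with a covering argument for general $K$) and then splits the \emph{measure} $\mu$ into a near part (supported in $|y|<2r$, giving a heat potential $v$) and a far part (giving a uniformly bounded $w$); the main work is showing $\|v\|_{L^p(\mathbb{R}^n\times(0,t))}\to0$, which requires a separate kernel estimate to control $v$ outside $B_{3r}$, followed by an appeal to the standard $L^p$-$L^\infty$ smoothing estimate for the heat semigroup. You instead keep $u$ intact, apply the reproducing formula $u(x,t)=\int\Phi(x-\zeta,\sigma)u(\zeta,t-\sigma)\,d\zeta$, average over $\sigma\in(t/2,t)$, and split the \emph{spatial integral} into $K_\rho$ and its complement; the complement is killed by the off-diagonal exponential decay of $\Phi$, and the near part is handled directly by H\"older with $u\in L^p(K_\rho\times(0,t/2))$ supplying the little-$o$. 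Your route is somewhat more streamlined: it treats a general compact $K$ in one pass (no covering step), it avoids the auxiliary tail estimate $\|v\|_{L^p((\mathbb{R}^n\setminus B_{3r})\times(0,t))}$, and it is self-contained where the paper cites an external $L^p$-$L^\infty$ bound. The paper's approach, conversely, isolates the smoothing estimate $v(x,t)\le C(t-\tau)^{-n/(2p)}\|v(\cdot,\tau)\|_{L^p(\mathbb{R}^n)}$ as the conceptual core, which is a clean and reusable statement; in your argument that estimate is effectively rederived inside the H\"older step via Lemma~\ref{lem2.2}.
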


\begin{proof}
 The proof consists of two steps.\\
 \textbf{Step 1.} In this step we prove Lemma \ref{lem2.4} in the special case that
 \begin{equation}\label{2.20}
  \Omega=B_{3r}(x_0 )\quad\text{and}\quad K=\overline{B_r (x_0 )}
 \end{equation}
 for some $x_0 \in\mathbb{R}^n$ and some $r>0$.  Clearly we can assume
 $x_0 =0$.  Since $u=v+w$ where
 $$v(x,t)=\int_{|y|<2r}\Phi(x-y,t)\,d\mu(y)$$
 and
 $$w(x,t)=\int_{|y|\geq2r}\Phi(x-y,t)\,d\mu(y),$$
 to complete step 1, it suffices to prove $v$ and $w$ satisfy \eqref{2.19} when $\Omega$ and $K$ are given by \eqref{2.20}.
 
 Since for $|x-y|\geq r$ and $t>0$
 $$\Phi(x-y,t)\leq\frac{1}{r^n}\left(\frac{r^2}{4\pi t}\right)^{n/2}e^{-\frac{r^2}{4t}}\leq r^{-n}C(n)$$
 we have
 $$\max_{|x|\leq r}w(x,t)\leq r^{-n}C(n)\int_{\mathbb{R}^n}d\mu(y)<\infty\quad\text{for }t>0.$$
 Thus $w$ satisfies \eqref{2.19} when $\Omega$ and $K$ are given by \eqref{2.20}.
 
 For $|y|\leq 2r$ and $\tau>0$ it follows from Lemma \ref{lem2.2} that
 \begin{align*}
  \int_{|x|\geq3r}\Phi(x-y,\tau)^p dx&=\frac{1}{(4\pi\tau)^{np/2}}\int_{|x|\geq3r}e^{-\frac{p|x-y|^2}{4\tau}}dx\\
  &\leq\frac{1}{(4\pi\tau)^{np/2}}\int_{|x-y|\geq r}e^{-\frac{p|x-y|^2}{4\tau}}dx\\
  &=\frac{C(n,p)}{r^{n(p-1)}}\left[\left(\frac{r}{\sqrt{\tau}}\right)^{n(p-1)}\int_{|z|>\sqrt{\frac{p}{4}}\frac{r}{\sqrt{\tau}}}e^{-|z|^2}dz\right]\\
  &\leq C(n,p)/r^{n(p-1)}.
 \end{align*}
 We obtain therefore from Jensen's inequality and Fubini's theorem that
 \begin{align}\label{2.21}
  \notag \| v\|^{p}_{L^p ((\mathbb{R}^n \backslash
           B_{3r}(0))\times(0,t))}
&=\int^{t}_{0}\int_{|x|\ge 3r}\left(\int_{|y|<2r}\Phi(x-y,\tau)\,d\mu(y)\right)^p dx\,d\tau\\
  \notag &\le |\mu|^{p-1}\int_{|y|<2r}\int^{t}_{0}\left(\int_{|x|\geq3r}\Phi(x-y,\tau)^p dx\right)d\tau \,d\mu(y)\\
  &\leq|\mu|^p C(n,p)t/r^{n(p-1)}\quad\text{for all }t>0.
 \end{align}
 We now use \eqref{2.21} to show $v$ satisfies \eqref{2.19}.
 
 For $0<\tau<t$ and $x\in\mathbb{R}^n$ it follows from standard
 $L^p$-$L^q$ estimates with $q=\infty$ (see \cite[Prop. 48.4]{QS} that
 $$v(x,t)\leq(4\pi)^{\frac{-n}{2p}}(t-\tau)^{\frac{-n}{2p}}\| v(\cdot,\tau)\|_{L^p (\mathbb{R}^n )}.$$
 Hence
 $$v(x,t)^p \int^{t}_{0}(t-\tau)^{n/2}d\tau\leq(4\pi)^{-n/2}\| v\|_{L^p (\mathbb{R}^n \times(0,t))}^p$$
 which implies
 \begin{align*}
  \max_{x\in\mathbb{R}^n}v(x,t)t^{\frac{n+2}{2p}}&\leq C(n,p)\| v\|_{L^p (\mathbb{R}^n \times(0,t))}\\
  &\leq C(n,p)\left[\| u\|_{L^p (B_{3r}(0)\times(0,t))}+\| v\|_{L^p (\mathbb{R}^n \backslash B_{3r}(0)\times(0,t))}\right]\\
  &\to0\quad\text{as }t\to0^+
 \end{align*}
 by \eqref{2.18} and \eqref{2.21}.  Thus $v$ satisfies \eqref{2.19} when $\Omega$ and $K$ are given by \eqref{2.20}.
\medskip
 
 \noindent\textbf{Step 2.} We now use Step 1 to complete the proof.  For each $x\in K$ choose $r_x >0$ such that $B_{3r_x}(x)\subset\Omega$.  Since $K$ is compact there exists finitely many points $x_1 ,...,x_m$ in $K$ such that
 \begin{equation}\label{2.22}
  K\subset\bigcup^{m}_{j=1}B_{r_j}(x_i )\quad \text{where }r_j =r_{x_j}.
 \end{equation}
 For $j=1,2,...,m$ we have by Step 1 that
 $$\max_{|x-x_j |\leq r_j}u(x,t)=o\left(t^{-\frac{n+2}{2p}}\right)\quad\text{as }t\to0^+ .$$
 Hence \eqref{2.19} follows from \eqref{2.22}.
\end{proof}

\begin{lem}\label{lem2.5}
 Suppose $r>0$ and $\beta>n+2$ are constants and $(x_0 ,t_0 )\in\mathbb{R}^n \times\mathbb{R}$.  Then for $(x,t)\in\overline{\mathcal{P}_r (x_0 ,t_0 )}$ we have
 $$\iint_{\mathbb{R}^n \times\mathbb{R}\backslash\mathcal{P}_{2r}(x_0 ,t_0 )}\Phi(x-y,t-s)^{\beta/n}\,dy\,ds\leq\frac{C}{\sqrt{r}^{\beta-(n+2)}}$$
 where $C=C(n,\beta)>0$.
\end{lem}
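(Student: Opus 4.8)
The plan is to reduce to the unit scale $r=1$ by translation and parabolic scaling, and then to bound the resulting scale-invariant integral by splitting the complement $(\R^n\times\R)\setminus\mathcal{P}_{2r}(x_0,t_0)$ into the part where the spatial variable is far from $x_0$ and the part where the time variable is far below $t_0$.

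First I would use the translation invariance of $\Phi$ to assume $(x_0,t_0)=(0,0)$. For $(x,t)\in\overline{\mathcal{P}_r(0,0)}$ write $x=\sqrt r\,\xi$, $t=r\theta$, so that $(\xi,\theta)\in\overline{\mathcal{P}_1(0,0)}$, and in the integral substitute $y=\sqrt r\,\eta$, $s=r\zeta$. Since $\Phi(\sqrt r\,z,\,r\tau)=r^{-n/2}\Phi(z,\tau)$ and $dy\,ds=r^{(n+2)/2}\,d\eta\,d\zeta$, and since this substitution carries $\mathcal{P}_{2r}(0,0)$ onto $\mathcal{P}_2(0,0)$, the integral in the statement equals
\[
r^{(n+2-\beta)/2}\iint_{(\R^n\times\R)\setminus\mathcal{P}_2(0,0)}\Phi(\xi-\eta,\theta-\zeta)^{\beta/n}\,d\eta\,d\zeta ,
\]
and $r^{(n+2-\beta)/2}=(\sqrt r)^{-(\beta-(n+2))}$. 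Hence it suffices to show the last integral is bounded by a constant $C(n,\beta)$, uniformly for $(\xi,\theta)\in\overline{\mathcal{P}_1(0,0)}$.

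For this, note $\Phi(\xi-\eta,\theta-\zeta)^{\beta/n}=0$ unless $\zeta<\theta$, and on $\overline{\mathcal{P}_1(0,0)}$ we have $|\xi|\le1$ and $-1\le\theta\le0$; so on the relevant part of the complement of $\mathcal{P}_2(0,0)$ either $|\eta|\ge\sqrt2$ (whence $|\xi-\eta|\ge\sqrt2-1$) or $\zeta\le-2$ (whence $\theta-\zeta\ge1$). Therefore, after the substitution $z=\xi-\eta$, $\tau=\theta-\zeta$, the integral is at most
\[
\iint_{\{|z|\ge\sqrt2-1\}\times(0,\infty)}\Phi(z,\tau)^{\beta/n}\,dz\,d\tau
+\iint_{\R^n\times[1,\infty)}\Phi(z,\tau)^{\beta/n}\,dz\,d\tau .
\]
Writing $\Phi(z,\tau)^{\beta/n}=(4\pi\tau)^{-\beta/2}e^{-\beta|z|^2/(4n\tau)}$ and performing the $z$-integration with Lemma \ref{lem2.2}, the second term equals $C(n,\beta)\int_1^\infty\tau^{(n-\beta)/2}\,d\tau$, which is finite because $\beta>n+2$. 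In the first term I split at $\tau=1$: the part $\tau\ge1$ is dominated by the second integral, while on $0<\tau<1$ I use $e^{-\beta|z|^2/(4n\tau)}\le e^{-\beta(\sqrt2-1)^2/(8n\tau)}e^{-\beta|z|^2/(8n\tau)}$ for $|z|\ge\sqrt2-1$ and integrate in $z$ to obtain a bound $C(n,\beta)\int_0^1\tau^{(n-\beta)/2}e^{-c/\tau}\,d\tau$ with $c=c(n,\beta)>0$, which is finite since the integrand extends continuously by $0$ to $\tau=0$.

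The only genuinely analytic point is this last convergence near $\tau=0$: the factor $\tau^{(n-\beta)/2}$ blows up, but it is killed by $e^{-c/\tau}$, which decays faster than any power of $\tau$; the hypothesis $\beta>n+2$ is used precisely to make the $\tau\to\infty$ tails converge. All constants depend only on $n$ and $\beta$, so collecting them gives the asserted bound $C(n,\beta)/(\sqrt r)^{\beta-(n+2)}$; tracking them is otherwise routine.
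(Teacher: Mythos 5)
Your proof is correct, and the key mechanism is the same as in the paper's: the exterior is split into a ``far in time'' part, where the integrand decays like $\tau^{(n-\beta)/2}$ with $\beta>n+2$, and a ``far in space'' part, where the spatial Gaussian tail (equivalently $e^{-c/\tau}$ after extracting a uniform factor) defeats the $\tau^{(n-\beta)/2}$ singularity as $\tau\to 0^+$. The structural difference is that you first translate to $(x_0,t_0)=(0,0)$ and perform the parabolic rescaling $x=\sqrt r\,\xi$, $t=r\theta$, $y=\sqrt r\,\eta$, $s=r\zeta$, using $\Phi(\sqrt r\,z,r\tau)=r^{-n/2}\Phi(z,\tau)$ to pull out the factor $r^{(n+2-\beta)/2}$ up front, reducing to a scale-free bound on $\overline{\mathcal{P}_1(0,0)}$; the paper instead works directly at scale $r$, partitions the exterior into $A=\R^n\times(-\infty,t_0-2r]$ and $B=(\R^n\setminus B_{\sqrt{2r}}(x_0))\times(t_0-2r,t_0)$, and on $B$ replaces $|x-y|$ by $\tfrac14|y-x_0|$ before computing with Lemma~\ref{lem2.2}. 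Your rescaling makes the $r$-dependence transparent at the outset and replaces the paper's pointwise comparison $|x-y|\ge\tfrac14|y-x_0|$ by the simpler observation $|\xi-\eta|\ge\sqrt2-1$, at the cost of an (harmless) overlap between your two pieces; on the far-in-space piece your splitting of the exponential into the uniform factor $e^{-\beta(\sqrt2-1)^2/(8n\tau)}$ and a full Gaussian is a slightly cleaner way to quantify the same decay that the paper packages into an incomplete Gaussian integral via Lemma~\ref{lem2.2}.
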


\begin{proof}
 Throughout this proof $(x,t)\in\overline{\mathcal{P}_r (x_0 ,t_0 )}$ and $C=C(n,\beta)$ is a positive constant whose value may change from line to line.  Let
 $$A=\mathbb{R}^n \times(-\infty,t_0 -2r]\quad\text{and}\quad B=(\mathbb{R}^n \backslash B_{\sqrt{2r}}(x_0 ))\times(t_0 -2r,\,t_0 ).$$
 For $(y,s)\in B$ we have
 $$\frac{|y-x|}{|y-x_0 |}\geq\frac{|y-x_0 |-|x-x_0 |}{|y-x_0 |}=1-\frac{|x-x_0 |}{|y-x_0 |}\geq1-\frac{\sqrt{r}}{\sqrt{2r}}=1-\frac{1}{\sqrt{2}}>\frac{1}{4}.$$
 It therefore follows from Lemma \ref{lem2.2} that
 \begin{align*}
  \iint_B \Phi(x-y,t-s)^{\beta/n}\,dy\,ds&\leq\int^{t}_{t_0 -2r}\frac{1}{(4\pi(t-s))^{\beta/2}}\left(\int_{|y-x_0 |>\sqrt{2r}}e^{-\frac{\beta|y-x_0 |^2}{64n(t-s)}}dy\right)ds\\
  &=Cr^{\frac{n-\beta}{2}}\int^{t}_{t_0 -2r}\left(\frac{r}{t-s}\right)^{\frac{\beta-n}{2}}\int_{|z|>\sqrt{\frac{2\beta}{64n}}\sqrt{\frac{r}{t-s}}}e^{-|z|^2}dz\,ds\\
  &\leq Cr^{\frac{n+2-\beta}{2}}.
 \end{align*}
 Also, by Lemma \ref{lem2.2}, we obtain
 \begin{align*}
  \iint_{A}\Phi(x-y,t-s)^{\beta/n}\,dy\,ds&=\int^{t_0 -2r}_{-\infty}\frac{1}{(4\pi(t-s))^{\beta/2}}\int_{\mathbb{R}^n}e^{-\frac{\beta|x-y|^2}{4n(t-s)}}\,dy\,ds\\
  &=C\int^{t_0 -2r}_{-\infty}(t-s)^{\frac{n-\beta}{2}}ds\\
  &=C(t-t_0 +2r)^{\frac{n+2-\beta}{2}}\leq Cr^{\frac{n+2-\beta}{2}}.
 \end{align*}
 Thus Lemma \ref{lem2.5} follows from the fact that
 $$\iint_{\mathbb{R}^n \times\mathbb{R}\backslash\mathcal{P}_{2r}(x_0
   ,t_0 )}\Phi(x-y,t-s)^{\beta/n}\,dy\,ds=\iint_{A\cup B} \Phi(x-y,t-s)^{\beta/n}\,dy\,ds.$$
\end{proof}

\begin{lem}\label{lem2.6}
 Suppose $r>0$ and $0<\beta<n+2$ are constants and $(x_0 ,t_0 )\in\mathbb{R}^n \times\mathbb{R}$.  Then
 $$\iint_{(y,s)\in\mathcal{P}_r (x_0 ,t_0 )}\Phi(x-y,t-s)^{\beta/n}\,dy\,ds\leq C\sqrt{r}^{n+2-\beta}\quad\text{for }(x,t)\in\mathcal{P}_r (x_0 ,t_0 )$$
 where $C=C(n,\beta)>0$.
\end{lem}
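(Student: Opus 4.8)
The plan is to collapse the double integral into an elementary one‑dimensional integral by first integrating out the spatial variable and then the time variable. Recall from \eqref{1.9} (with $\alpha$ there replaced by $\beta$) that
\[
\Phi(x-y,t-s)^{\beta/n}=\frac{1}{(4\pi)^{\beta/2}}(t-s)^{-\beta/2}
e^{-\frac{\beta}{4n}\frac{|x-y|^2}{t-s}}\chi_{(0,\infty)}(t-s),
\]
so the integrand is nonzero only when $s<t$. Since both $(x,t)$ and $(y,s)$ lie in $\mathcal{P}_r(x_0,t_0)$, we have $t_0-r<s<t<t_0$, and hence whenever the integrand does not vanish the time difference satisfies $0<t-s<t-t_0+r\le r$. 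This observation is what forces the correct power of $r$ at the end.

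First I would fix $s\in(t_0-r,t)$ and bound the inner integral by enlarging the region of integration from $\{|y-x_0|<\sqrt r\}$ to all of $\mathbb{R}^n$. Applying Lemma \ref{lem2.2} with $\gamma=\frac{\beta}{4n(t-s)}$ gives
\[
\int_{|y-x_0|<\sqrt r}e^{-\frac{\beta}{4n}\frac{|x-y|^2}{t-s}}\,dy
\le\int_{\mathbb{R}^n}e^{-\frac{\beta}{4n}\frac{|x-y|^2}{t-s}}\,dy
=C(n)\Bigl(\tfrac{4n}{\beta}\Bigr)^{n/2}(t-s)^{n/2}=C(n,\beta)(t-s)^{n/2}.
\]
Substituting this back and combining the powers of $t-s$, the double integral is at most
\[
C(n,\beta)\int_{t_0-r}^{t}(t-s)^{(n-\beta)/2}\,ds
=C(n,\beta)\int_0^{\,t-t_0+r}\tau^{(n-\beta)/2}\,d\tau,
\]
after the change of variables $\tau=t-s$.

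The hypothesis $\beta<n+2$ enters exactly here: it ensures $\frac{n-\beta}{2}>-1$, so the last integral converges and equals $\frac{2}{n+2-\beta}(t-t_0+r)^{(n+2-\beta)/2}$. Since $0<t-t_0+r\le r$ and the exponent $\frac{n+2-\beta}{2}$ is positive, this is bounded by $\frac{2}{n+2-\beta}\,r^{(n+2-\beta)/2}=C(n,\beta)\sqrt r^{\,n+2-\beta}$, which is the asserted estimate. There is no real obstacle in this argument; the only points that require attention are keeping track of the effective range $0<t-s\le r$ imposed by the geometry of the cylinder and invoking $\beta<n+2$ to guarantee integrability of $\tau^{(n-\beta)/2}$ at $\tau=0$.
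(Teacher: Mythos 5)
Your proof is correct and follows essentially the same route as the paper: enlarge the spatial integral to all of $\mathbb{R}^n$, apply Lemma \ref{lem2.2} to evaluate the Gaussian integral, reduce to $\int_{t_0-r}^t(t-s)^{(n-\beta)/2}\,ds$, and use $\beta<n+2$ together with $t-t_0+r\le r$ to obtain the bound $C\sqrt r^{\,n+2-\beta}$. The only cosmetic difference is that you spell out the change of variables $\tau=t-s$ and the role of the convergence hypothesis more explicitly.
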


\begin{proof}
 By Lemma \ref{lem2.2}, we have for $(x,t)\in\mathcal{P}_r (x_0 ,t_0 )$ that
 \begin{align*}
  \iint_{\mathcal{P}_r (x_0 ,t_0 )}\Phi(x-y,t-s)^{\beta/n}\,dy\,ds&\leq\int^{t}_{t_0 -r}\frac{1}{(4\pi(t-s))^{\beta/2}}\left(\int_{\mathbb{R}^n}e^{-\frac{\beta|x-y|^2}{4n(t-s)}}dy\right)ds\\
  &=C\int^{t}_{t_0 -r}(t-s)^{\frac{n-\beta}{2}}ds\\
  &=C(t-t_0 +r)^{\frac{n+2-\beta}{2}}\leq C\sqrt{r}^{n+2-\beta}.
 \end{align*}
\end{proof}

\begin{lem}\label{lem2.7}
 Suppose $\alpha\in(0,n+2)$ and $\beta\in[0,n+2)$ are constants.  Then
 \begin{equation}\label{2.23}
  \iint_{\mathbb{R}^n \times (0,t)}\Phi(x-y,t-s)^{\alpha/n}\Phi(y-z,s)^{\beta/n}\,dy\,ds\leq\frac{C}{\sqrt{t}^{\alpha+\beta-(n+2)}}
 \end{equation}
 for all $x,z\in\mathbb{R}^n$ and $t>0$ where $C=C(n,\alpha,\beta)>0$.
\end{lem}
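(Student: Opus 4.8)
The plan is to reduce \eqref{2.23} to one-dimensional integrals in the time variable, using only Lemma~\ref{lem2.2} and the explicit formula \eqref{1.9}. Writing $c_\alpha=\alpha/(4n)$ and $c_\beta=\beta/(4n)$, formula \eqref{1.9} gives, for $0<s<t$,
\[
\Phi(x-y,t-s)^{\alpha/n}\Phi(y-z,s)^{\beta/n}
=C\,(t-s)^{-\alpha/2}s^{-\beta/2}
e^{-c_\alpha\frac{|x-y|^2}{t-s}}e^{-c_\beta\frac{|y-z|^2}{s}},
\]
where $C$ is a positive constant. I may assume $\beta>0$, since when $\beta=0$ the factor $\Phi(y-z,s)^{\beta/n}$ equals $1$ and integrating the single remaining Gaussian in $y$ via Lemma~\ref{lem2.2} bounds the integrand by $C(t-s)^{(n-\alpha)/2}$, whose integral over $s\in(0,t)$ is $Ct^{(n+2-\alpha)/2}$, i.e. the asserted bound.

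Next I would split the $s$-integral in \eqref{2.23} over $(0,t/2)$ and $(t/2,t)$. On $(0,t/2)$ I would estimate $e^{-c_\alpha|x-y|^2/(t-s)}\le 1$ and integrate the other Gaussian in $y$ using Lemma~\ref{lem2.2} to get $\int_{\R^n}e^{-c_\beta|y-z|^2/s}\,dy\le C s^{n/2}$; since $(t-s)^{-\alpha/2}\le C t^{-\alpha/2}$ on this range, the resulting integrand is dominated by $C t^{-\alpha/2}s^{(n-\beta)/2}$, and $\int_0^{t/2}s^{(n-\beta)/2}\,ds=Ct^{(n-\beta+2)/2}$, the integral converging precisely because $\beta<n+2$. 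This contributes $Ct^{(n+2-\alpha-\beta)/2}=C/\sqrt{t}^{\,\alpha+\beta-(n+2)}$. On $(t/2,t)$ I would do the symmetric estimate: bound $e^{-c_\beta|y-z|^2/s}\le 1$, integrate $e^{-c_\alpha|x-y|^2/(t-s)}$ in $y$ to get $C(t-s)^{n/2}$, use $s^{-\beta/2}\le Ct^{-\beta/2}$, and integrate $\int_{t/2}^t(t-s)^{(n-\alpha)/2}\,ds$, which converges because $\alpha<n+2$ and again yields $Ct^{(n+2-\alpha-\beta)/2}$. Adding the two contributions gives \eqref{2.23} with $C=C(n,\alpha,\beta)$.

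The argument has no serious obstacle; the only point requiring care is the choice of which Gaussian to discard on each half of the time interval. Discarding both Gaussians and integrating $\int_{\R^n}1\,dy$ is impossible, while keeping both but integrating only one of them over all of $(0,t)$ would force the stronger hypothesis $\min\{\alpha,\beta\}<2$; splitting at $s=t/2$ is exactly what lets the two integrability thresholds $\alpha<n+2$ and $\beta<n+2$ act separately. Alternatively, one could complete the square in $y$, obtaining from Lemma~\ref{lem2.2} the sharper bound $\int_{\R^n}e^{-c_\alpha|x-y|^2/(t-s)-c_\beta|y-z|^2/s}\,dy\le C\bigl(\tfrac{\alpha}{t-s}+\tfrac{\beta}{s}\bigr)^{-n/2}$, then use $\tfrac{\alpha}{t-s}+\tfrac{\beta}{s}=\tfrac{\alpha s+\beta(t-s)}{s(t-s)}\ge \tfrac{\min\{\alpha,\beta\}\,t}{s(t-s)}$ to reduce \eqref{2.23} to the Beta-type integral $t^{-n/2}\int_0^t(t-s)^{(n-\alpha)/2}s^{(n-\beta)/2}\,ds$, which converges iff $\alpha,\beta<n+2$ and equals $Ct^{(n+2-\alpha-\beta)/2}$.
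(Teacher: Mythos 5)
Your proof is correct and uses essentially the same idea as the paper: split the time integral at the midpoint, and on each half discard the Gaussian whose singularity is away from that half before integrating the remaining Gaussian in $y$ via Lemma~\ref{lem2.2}. The only presentational difference is that the paper first rescales by the change of variables $x-z=\sqrt{t}\xi$, $y-z=\sqrt{t}\eta$, $s=t\zeta$ so that the factor $\sqrt{t}^{\,n+2-\alpha-\beta}$ pops out automatically and the remaining $\zeta$-integral is $t$-independent, whereas you track the powers of $t$ by hand; the underlying estimate is identical.
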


\begin{proof} When $\beta =0$, Lemma \ref{lem2.7} follows directly
  from Lemma \ref{lem2.2}. Hence we can assume $\beta\in (0,n+2)$.
 Under the change of variables
 \[
x-z=\sqrt{t}\xi,\quad y-z=\sqrt{t}\eta,\quad s=t\zeta 
 \]
 we see that the left side of \eqref{2.23} equals
 \begin{align*}
  &\iint_{\mathbb{R}^n \times(0,1)}\Phi(\sqrt{t}(\xi-\eta),t(1-\zeta))^{\alpha/n}\Phi(\sqrt{t}\eta,t\zeta)^{\beta/n}\sqrt{t}^{n+2}\,d\eta\,d\zeta\\
  &=\iint_{\mathbb{R}^n \times(0,1)}\left(\frac{1}{(4\pi
    t(1-\zeta))^{n/2}}\right)^{\alpha/n}\left(\frac{1}{(4\pi
    t\zeta)^{n/2}}\right)^{\beta/n}
e^{-\frac{\alpha}{n}\frac{|\xi-\eta|^2}{4(1-\zeta)}-\frac{\beta}{n}\frac{|\eta|^2}{4\zeta}}
\sqrt{t}^{n+2}\,d\eta\,d\zeta\\
  &=\frac{C(n,\alpha,\beta)}{\sqrt{t}^{\alpha+\beta-(n+2)}}\int^{1}_{0}\frac{1}{(1-\zeta)^{\alpha/2}\zeta^{\beta/2}}\left(\int_{\mathbb{R}^n}
e^{-\frac{\alpha}{n}\frac{|\xi-\eta|^2}{4(1-\zeta)}-\frac{\beta}{n}\frac{|\eta|^2}{4\zeta}}d\eta\right)d\zeta\\
  &\leq\frac{C(n,\alpha,\beta)}{\sqrt{t}^{\alpha+\beta-(n+2)}}\left[\int^{1/2}_{0}\frac{d\zeta}{\zeta^{\beta/2-n/2}}+\int^{1}_{1/2}\frac{d\zeta}{(1-\zeta)^{\alpha/2-n/2}}\right]
 \end{align*}
by Lemma \ref{lem2.2}.
\end{proof}

\begin{lem}\label{lem2.8}
Suppose $(x_0,t_0)\in\R^n\times\R$ and $r>0$. If
  \[
(x,t)\in\overline{\mathcal{P}_r(x_0,t_0)} \quad \text{and}\quad 
  (y,s)\in(\R^n\times\R)\setminus\overline{\mathcal{P}_{2r}(x_0,t_0)}
\] then
\[
\Phi(x-y,t-s)\le\frac{C(n)}{r^{n/2}}.
\] 
\end{lem}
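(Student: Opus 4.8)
The plan is to dispose of the trivial ranges of $t-s$ first and then handle the one genuine case. If $t-s\le 0$ then $\Phi(x-y,t-s)=0$ by \eqref{1.3} and there is nothing to prove, so I would assume $t-s>0$. If moreover $t-s\ge r$, then discarding the Gaussian factor gives
$\Phi(x-y,t-s)\le (4\pi(t-s))^{-n/2}\le (4\pi r)^{-n/2}$,
which is already of the asserted form $C(n)r^{-n/2}$. Thus the only case requiring work is $0<t-s<r$.

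In that case the first step is to pin down the time coordinate of $(y,s)$. Since $(x,t)\in\overline{\mathcal{P}_r(x_0,t_0)}$ we have $|x-x_0|\le\sqrt r$ and $t_0-r\le t\le t_0$; combining $t\le t_0$ with $t-r<s<t$ (which follows from $0<t-s<r$) and with $t\ge t_0-r$ yields $t_0-2r<s<t_0$. Hence the time coordinate of $(y,s)$ already lies in the time interval $[t_0-2r,t_0]$ of the closed cylinder $\overline{\mathcal{P}_{2r}(x_0,t_0)}$. Since by hypothesis $(y,s)\notin\overline{\mathcal{P}_{2r}(x_0,t_0)}$, the spatial coordinate must therefore leave the spatial ball, i.e.\ $|y-x_0|>\sqrt{2r}$, and then the triangle inequality gives $|x-y|\ge|y-x_0|-|x-x_0|>(\sqrt 2-1)\sqrt r$.

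To finish, I would set $\tau=t-s\in(0,r)$, so that $\Phi(x-y,t-s)\le (4\pi\tau)^{-n/2}\exp\bigl(-(\sqrt 2-1)^2 r/(4\tau)\bigr)$, and then invoke the elementary one-variable bound $\sup_{\tau>0}\tau^{-n/2}e^{-a/\tau}=C(n)\,a^{-n/2}$ (the supremum is attained at $\tau=2a/n$) with $a=(\sqrt 2-1)^2 r/4$ to obtain $\Phi(x-y,t-s)\le C(n)r^{-n/2}$, completing the argument.

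There is no real obstacle here: the statement is essentially a bookkeeping fact combining the geometry of the parabolic cylinders $\mathcal{P}_r$ and $\mathcal{P}_{2r}$ with the standard observation that away from its singularity $\Phi(\cdot,\tau)$ is bounded in terms of the distance from the singular point; the same type of estimate on $\Phi$ already appears in the proofs of Lemma \ref{lem2.3} and Lemma \ref{lem2.5} and could be quoted from there. If anything deserves attention it is the case split on $t-s$ and, within the main case, the verification that the escape from $\overline{\mathcal{P}_{2r}(x_0,t_0)}$ must be spatial rather than temporal.
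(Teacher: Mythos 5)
Your proof is correct and follows essentially the same approach as the paper: the paper splits on the location of $s$ (namely $s\ge t$, $s<t_0-2r$, $t_0-2r\le s<t$), while you split on $t-s$, but the substance is identical. In particular, both arguments rest on the geometric observation that when $t-s$ is small the point $(y,s)$ must leave $\overline{\mathcal{P}_{2r}(x_0,t_0)}$ spatially, giving $|x-y|>(\sqrt{2}-1)\sqrt{r}$, together with the elementary bound $\sup_{\tau>0}\tau^{-n/2}e^{-a/\tau}=C(n)a^{-n/2}$.
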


\begin{proof} We consider three cases.

\noindent {\bf Case I}. Suppose $t_0-2r\le s<t$. Then $|x-y|\ge
  (\sqrt{2}-1) \sqrt{r}$ and hence 
\begin{align*}
\Phi(x-y,t-s)&\le\frac{e^{-\frac{(\sqrt{2}-1)^2r}{4(t-s)}}}{(4\pi
  (t-s))^{n/2}}\le\sup_{\tau>0}\frac{e^{-\frac{(\sqrt{2}-1)^2r}{4\tau}}}{(4\pi
  \tau)^{n/2}}\\
             &=\sup_{\zeta>0}\frac{e^{-(\sqrt{2}-1)^2\zeta}}{(\pi r/\zeta)^{n/2}}
=\frac{C(n)}{r^{n/2}}.
\end{align*}
\noindent {\bf Case II}. Suppose $s<t_0-2r$. Then $t-s\ge r$ and hence 
\[
\Phi(x-y,t-s)\le\frac{1}{(4\pi r)^{n/2}}=\frac{C(n)}{r^{n/2}}.
\]
\noindent {\bf Case III}. Suppose $s\ge t$. Then $\Phi(x-y,t-s)=0$.
\end{proof}

\begin{lem}\label{lem2.10}
 Suppose $\alpha>0$ and $T$ are constants.  Then for $s<t\leq T$ and $|x|\leq\sqrt{T-t}$ we have
 $$\int_{|y|<\sqrt{T-s}}\Phi(x-y,t-s)^{\alpha/n}dy\geq\frac{C}{(t-s)^{(\alpha-n)/2}}$$
 where $C=C(n,\alpha)$ is a positive constant. 
\end{lem}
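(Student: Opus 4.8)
The plan is to reduce the stated bound, by parabolic scaling, to the claim that a Gaussian integral over a Euclidean ball is bounded below by a constant depending only on $n$ and $\alpha$; the ball that appears need not be centered at the origin, and the crux is a short geometric observation showing that it must still contain a fixed half of the unit ball.

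First I would use \eqref{1.9} together with the substitution $w=(x-y)/\sqrt{t-s}$ to write
$$\int_{|y|<\sqrt{T-s}}\Phi(x-y,t-s)^{\alpha/n}\,dy
=\frac{(t-s)^{(n-\alpha)/2}}{(4\pi)^{\alpha/2}}\int_{|w-p|<R}e^{-\frac{\alpha}{4n}|w|^2}\,dw ,$$
where $p:=x/\sqrt{t-s}$ and $R:=\sqrt{(T-s)/(t-s)}$. Writing $T-s=(T-t)+(t-s)$ and using the hypotheses $t\le T$ and $|x|\le\sqrt{T-t}$, one obtains
$$R^2=\frac{T-t}{t-s}+1\ge|p|^2+1 .$$

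Next I would establish the geometric claim that the closed ball $\{\,|w-p|\le R\,\}$ contains the half-ball $E:=\{\,w:|w|\le 1,\ \langle w,p\rangle\ge 0\,\}$ (and contains all of $\{|w|<1\}$ when $p=0$, since then $R\ge 1$). Indeed, for $w\in E$ we have $|w-p|^2=|w|^2-2\langle w,p\rangle+|p|^2\le 1+|p|^2\le R^2$. Because $w\mapsto-w$ sends $E$ onto $\{\,w:|w|\le 1,\ \langle w,p\rangle\le 0\,\}$ while preserving both $|w|$ and Lebesgue measure, it follows that
$$\int_{|w-p|<R}e^{-\frac{\alpha}{4n}|w|^2}\,dw\ \ge\ \int_{E}e^{-\frac{\alpha}{4n}|w|^2}\,dw
=\frac{1}{2}\int_{|w|<1}e^{-\frac{\alpha}{4n}|w|^2}\,dw=:c(n,\alpha)>0 .$$
Combining this with the previous identity gives the lemma with $C=c(n,\alpha)(4\pi)^{-\alpha/2}$.

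The scaling computation in the first step is routine; the geometric claim in the second is the only delicate point, precisely because as $t-s\to 0^+$ the center $p$ of the rescaled ball may escape to infinity. In particular one cannot replace the half-ball argument by a direct application of Lemma \ref{lem2.2} to some ball $\{|x-y|<c\sqrt{t-s}\}$ contained in $\{|y|<\sqrt{T-s}\}$: no such inclusion holds with $c$ independent of $T$, since $\sqrt{T-s}<\sqrt{T-t}+\sqrt{t-s}$. It is exactly the fact that $R$ grows in step with $|p|$, namely $R^2\ge|p|^2+1$, that keeps the half-ball inside $\{\,|w-p|\le R\,\}$ and the constant independent of $T$.
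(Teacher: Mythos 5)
Your proof is correct. Both you and the paper begin with the same parabolic scaling $w=(x-y)/\sqrt{t-s}$, reducing the lemma to a lower bound for a Gaussian integral over the ball $B_R(p)$ with $p=x/\sqrt{t-s}$ and $R=\sqrt{(T-s)/(t-s)}$. Where you diverge is in how you extract the constant lower bound. The paper first replaces $p$ by $Re_1$, appealing to the fact that $|p|\le R$ and that translating the center of a ball away from the origin decreases the integral of a radially decreasing function; it then observes that $B_1(e_1)\subset B_R(Re_1)$ because $R\ge 1$, so the integral over $B_1(e_1)$ serves as a lower bound. You instead exploit the sharper algebraic relation $R^2\ge |p|^2+1$ to show directly that $B_R(p)$ contains the half of the unit ball $\{|w|\le 1,\ \langle w,p\rangle\ge 0\}$, and then use a reflection symmetry to relate that half-ball integral to the full one. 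Your route has the small advantage of avoiding the translation-monotonicity fact for radially symmetric decreasing integrands (which, though standard, is not quite a one-liner); the paper's route has the small advantage of exhibiting a single fixed ball $B_1(e_1)$ uniform in all parameters. The two arguments yield the same conclusion and are comparable in length, and your closing remark about why the hypothesis $|x|\le\sqrt{T-t}$ is exactly what keeps the constant independent of $T$ is correct and well observed.
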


\begin{proof}
 Making the change of variables $z=\frac{x-y}{\sqrt{t-s}}$ 
and letting $e_1 =(1,0,...,0)$ we get
 \begin{align}
  \notag \int_{|y|<\sqrt{T-s}}\Phi(x-y,t-s)^{\alpha/n}dy&=\frac{1}{(4\pi)^{\alpha/2}}\frac{1}{(t-s)^{\alpha/2}}\int_{|y|<\sqrt{T-s}}e^{-\frac{\alpha|x-y|^2}{4n(t-s)}}dy\notag\\
  &=\frac{1}{(4\pi)^{\alpha/2}(t-s)^{(\alpha-n)/2}}\int_{|z-\frac{x}{\sqrt{t-s}}|<\frac{\sqrt{T-s}}{\sqrt{t-s}}}e^{-\frac{\alpha}{4n}|z|^2}dz \label{2.26}\\
  &\geq\frac{1}{(4\pi)^{\alpha/2}}\frac{1}{(t-s)^{(\alpha-n)/2}}\int_{|z-\frac{\sqrt{T-s}}{\sqrt{t-s}}e_1|<\frac{\sqrt{T-s}}{\sqrt{t-s}}}e^{-\frac{\alpha}{4n}|z|^2}dz
\label{2.27}\\
  &\geq\frac{1}{(4\pi)^{\alpha/2}}\frac{1}{(t-s)^{(\alpha-n)/2}}\int_{|z-e_1
    |<1}e^{-\frac{\alpha}{4n}|z|^2}dz, \label{2.28}
 \end{align}
 where the last two inequalities need some explanation.  Since $|x|\leq\sqrt{T-t}<\sqrt{T-s}$, the center of the ball of integration in \eqref{2.26} is closer to the origin than the center of the ball of integration in \eqref{2.27}.  Thus, since the integrand 
 is a decreasing function of $|z|$, we obtain \eqref{2.27}.  Since $\sqrt{T-s}\geq\sqrt{t-s}$, the ball of integration in \eqref{2.27} contains the ball of integration in \eqref{2.28} and hence \eqref{2.28} holds.
\end{proof}

\section{The case $0<\lambda<\frac{n+2-\alpha}{n}$}\label{sec3}

In this section we prove Theorems \ref{thm1.1}, \ref{thm1.3}, and
\ref{thm1.4} when $0<\lambda<(n+2-\alpha)/n$.  For these values of
$\lambda$, Remark \ref{rem2.1} and the following theorem imply Theorems \ref{thm1.1} and
\ref{thm1.4}.

\begin{thm}\label{thm3.1}
 Suppose $u$ is a nonnegative solution of \eqref{2.10},\eqref{2.11} for some constants $\alpha\in(0,n+2)$,
 \begin{equation}\label{3.1}
  0<\lambda<\frac{n+2-\alpha}{n}\quad\text{and}\quad 0\leq\sigma\leq\frac{n+2}{n}.
 \end{equation}
 Then 
 \begin{equation}\label{3.2}
  \max_{|x|\leq1}u(x,t)=O(t^{-n/2})\quad\text{as }t\to0^+ .
 \end{equation}
\end{thm}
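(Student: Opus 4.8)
The plan is to use the integral representation \eqref{2.15} together with the heat-potential estimates of Section \ref{sec2} to bootstrap an $L^\infty$ bound from an $L^\lambda$ bound. First I would set $v=u+1$ and invoke Lemma \ref{lem2.3}, so that $v$ satisfies \eqref{2.13}--\eqref{2.15} on $B_2(0)\times(0,8)$ with $v\ge 1$, $Hv\ge 0$, and $Hv,v^\beta\in L^1$ for $\beta\in[1,(n+2)/n)$. By Remark \ref{rem2.2} it then suffices to control the heat potential
\[
w(x,t):=\iint_{B_{\sqrt 8}(0)\times(0,8)}\Phi(x-y,t-s)Hv(y,s)\,dy\,ds
\]
and show $w(x,t)=O(t^{-n/2})$ uniformly for $|x|\le 1$. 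Using \eqref{2.13} we have $Hv\le C(\Phi^{\alpha/n}*v^\lambda)v^\sigma$, so the key is to estimate $\Phi^{\alpha/n}*v^\lambda$ and then iterate.

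The main mechanism is Moser-type iteration on integrability exponents. Because $0<\lambda<(n+2-\alpha)/n$, the exponent $p:=\tfrac{n+2}{n\lambda}$ satisfies $p>\tfrac{n+2}{n+2-\alpha}$, which is exactly the threshold making the convolution with $\Phi^{\alpha/n}$ a smoothing operator in the relevant $L^p$--$L^q$ scale (this is the point of the $\alpha<n+2-n\lambda$ hypothesis, and is captured by Lemmas \ref{lem2.6}, \ref{lem2.7} and the Hölder estimate underlying the commented-out Lemma 2.9). Concretely: starting from $v\in L^\lambda$, i.e. $v^\lambda\in L^{p}$ near $t=0$, Hölder plus Lemma \ref{lem2.7} (or the $\Phi^{\alpha q/n}$ integrability with $\alpha q<n+2$, where $1/q=1-1/p$) gives $\Phi^{\alpha/n}*v^\lambda\in L^q_{\mathrm{loc}}$ with a quantitative $t$-weight; combined with the bound $\sigma\le(n+2)/n$ and Hölder again one upgrades the integrability exponent of $Hv$, hence of $v$ via the heat potential $L^p$--$L^q$ estimate \cite[Prop.\ 48.4]{QS}. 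Repeating this finitely many times (each step gaining a fixed amount of integrability because $p$ is strictly above the critical exponent) one reaches $Hv\in L^{p_0}$ with $p_0>(n+2)/2$, at which point the heat potential $w$ is bounded in space–time, and tracking the powers of $t$ through the iteration — each application of Lemma \ref{lem2.6}/Lemma \ref{lem2.7} contributes an explicit power $t^{(n+2-\alpha-n\lambda)/2}$ type factor, and the final potential estimate contributes the $t^{-n/2}$ — yields $\max_{|x|\le1}v(x,t)=O(t^{-n/2})$, and \eqref{3.2} follows since $u=v-1\le v$.

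There are two technical points to be careful about. First, in each iteration step the convolution is over the finite cylinder $B_{\sqrt8}(0)\times(0,8)$, not all of $\R^n\times(0,\infty)$; one must split off the region far from $B_2(0)$ where $\Phi$ and $\Phi^{\alpha/n}$ are bounded (exactly as in the proof of Lemma \ref{lem2.3}, equations \eqref{2.16}--\eqref{2.17}, and in Lemma \ref{lem2.8}), so that the "bad" far contribution is absorbed into a harmless $O(t^{-n/2})$-type term and only the local convolution, to which Lemmas \ref{lem2.6} and \ref{lem2.7} apply, needs the smoothing estimate. Second, one must bookkeep the shrinking spatial domains and time intervals ($B_2\supset B_{3/2}\supset\cdots\supset B_1$, $(0,8)\supset(0,4)\supset\cdots$) across the finitely many steps, and verify that the exponent recursion $p\mapsto$ (new exponent) genuinely terminates above $(n+2)/2$ after finitely many steps — this uses $\lambda<(n+2-\alpha)/n$ in an essential way and is, I expect, the main obstacle: setting up the recursion cleanly and checking it both increases the exponent by a fixed geometric factor and keeps the accumulated power of $t$ equal to $-n/2$ (no better, no worse), so that the bound is consistent with the optimality asserted in Remark \ref{rem1}. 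The role of the hypothesis $\sigma\le(n+2)/n$ is precisely to guarantee that the factor $v^\sigma$ does not destroy integrability at the first step (since $v^{(n+2)/n}$ is exactly $L^1_{\mathrm{loc}}$ by \eqref{2.14}), and Hölder's inequality balances the exponents of $v^\lambda$ inside the convolution against $v^\sigma$ outside it.
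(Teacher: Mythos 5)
You have the right starting point (Lemma \ref{lem2.3}, the representation formula, Remark \ref{rem2.2}, the heat-potential estimates of Appendix \ref{secB}), but your proposal misses the key simplification that makes this case easy and glosses over the step that is actually hard. The paper's proof is very short: choosing $\varepsilon\in(0,1)$ with $\lambda<(n+2-\alpha)/(n+\varepsilon)$, the inclusion $v^\lambda\in L^{(n+2)/((n+\varepsilon)\lambda)}(\mathcal{P}_8(0,8))$ from \eqref{2.14} has integrability exponent strictly above $(n+2)/(n+2-\alpha)$, so Theorem \ref{thmB.2} (applied with $q=\infty$ and with $\alpha$ replaced by $n+2-\alpha$) gives $\Phi^{\alpha/n}*v^\lambda\in L^\infty(\mathcal{P}_8(0,8))$ in a \emph{single} step. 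No iteration of the nonlocal term is needed; this one-shot boundedness is exactly what the restriction $\lambda<(n+2-\alpha)/n$ buys. Your proposal treats the convolution as landing only in some finite $L^q_{\mathrm{loc}}$ "with a quantitative $t$-weight" and proposes to iterate it, which misstates the role of the hypothesis and adds unnecessary complexity.

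After that single step, \eqref{2.13} reduces to the purely local inequality $0\le Hv\le Cv^\sigma$ in $B_2(0)\times(0,8)$, and the $O(t^{-n/2})$ conclusion for this with $0\le\sigma\le(n+2)/n$ is precisely \cite[Theorem~1.1]{T2011}, which the paper simply cites. Your sketch does not adequately cover this remaining local problem, and this is a genuine gap. The claim that the heat potential of $Hv$ becomes bounded in space--time and then $v=O(t^{-n/2})$ follows skips the real difficulty: even after one has $v\le Ct^{-n/2}$, the crude pointwise bound $Hv\le Cv^\sigma\le Ct^{-n\sigma/2}$ makes $\iint\Phi\,Hv$ diverge whenever $\sigma\ge 2/n$, and one cannot bootstrap $Hv$ into $L^p$ with $p>(n+2)/2$ directly, since $v$ retains a $\Phi$-type singularity which is never better than $L^\beta$ for $\beta<(n+2)/n$. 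The mechanism in \cite{T2011} (and in Sections \ref{sec4}--\ref{sec5} of this paper for the harder $\lambda$-ranges) is a rescaling-plus-contradiction argument with Moser iteration applied to \emph{rescaled} quantities on shrinking parabolic cylinders, not a direct bootstrap on $v$ itself; the borderline exponent $\sigma=(n+2)/n$ is precisely where this care is essential. So your proposal would need to be restructured around that rescaling device (or, more simply, around the reduction-and-cite route the paper takes) to succeed.
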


\begin{proof}
  Let $v=u+1$.  Then by Lemma \ref{lem2.3} we have \eqref{2.12}--\eqref{2.15}
  hold.  To prove \eqref{3.2}, it clearly suffices to prove
 \begin{equation}\label{3.3}
  \max_{|x|\leq1}v(x,t)=O(t^{-n/2})\quad\text{as }t\to0^+ .
 \end{equation}
 Choose $\varepsilon\in(0,1)$ such that
 \begin{equation}\label{3.4}
  \lambda<\frac{n+2-\alpha}{n+\varepsilon}.
 \end{equation}
 By \eqref{2.14},
 $$v^\lambda \in L^{\frac{n+2}{(n+\varepsilon)\lambda}}(\mathcal{P}_8 (0,8)).$$
 Thus, since \eqref{3.4} implies
 $$\frac{\lambda(n+\varepsilon)}{n+2}<\frac{n+2-\alpha}{n+2}$$
 we have by Theorem \ref{thmB.2} (with $\alpha$ replaced with
 $n+2-\alpha$) that
 $$\Phi^{\alpha/n}*v^\lambda \in L^\infty (\mathcal{P}_8 (0,8))$$
 where the convolution operation is in $\mathcal{P}_8 (0,8)$.  Hence by
 \eqref{2.12} and \eqref{2.13}, $v$ is a $C^{2,1}$ positive solution
 of
 $$0\leq Hv\leq Cv^\sigma \quad\text{in }B_2 (0)\times(0,8).$$
 Thus by \eqref{3.1}$_2$ and \cite[Theorem 1.1]{T2011}, $v$ satisfies
 \eqref{3.3}.
\end{proof}

The following theorem implies Theorem \ref{thm1.3} when $0<\lambda<\frac{n+2}{n}$.

\begin{thm}\label{thm3.2}
 Suppose $\alpha,\lambda$, and $\sigma$ are constants satisfying 
 \begin{equation}\label{3.5}
  \alpha\in(0,n+2),\quad 0<\lambda<\frac{n+2}{n},\quad\text{and}\quad \sigma>\frac{2(n+2)-\alpha}{n}-\lambda.
 \end{equation}
 Let $\varphi:(0,1)\to(0,\infty)$ be a continuous function satisfying
 $$\lim_{t\to0^+}\varphi(t)=\infty.$$
 Then there exists a positive function
 \begin{equation}\label{3.6}
  u\in C^\infty (\mathbb{R}^n \times(0,1))\cap L^\lambda (\mathbb{R}^n \times(0,1))
 \end{equation}
 satisfying
 \begin{equation}\label{3.7}
  0\leq Hu\leq(\Phi^{\alpha/n}*u^\lambda )u^\sigma \quad\text{in }\mathbb{R}^n \times(0,1),
 \end{equation}
 where $*$ is the convolution operation in $\mathbb{R}^n \times(0,1)$, such that
 \begin{equation}\label{3.8}
  u(0,t)\neq O(\varphi(t))\quad\text{as }t\to0^+ .
 \end{equation}
\end{thm}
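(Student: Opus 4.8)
The plan is to construct $u$ as a convergent series of rescaled heat potentials, so that the lower bound $Hu\ge0$ is automatic, and to read off the admissible scales from the exponent identity produced by $\sigma>G_\alpha(\lambda)$. It suffices to exhibit $u$ satisfying \eqref{3.6},\eqref{3.7} together with a sequence $t_{j}\downarrow0$ such that $u(0,t_{j})\ge j\,\varphi(t_{j})$ for every $j$, since this already forces \eqref{3.8}. Fix $\phi\in C^{\infty}_{c}(\mathbb{R}^{n}\times\mathbb{R})$ with $\phi\ge0$, $\phi\not\equiv0$, and $\mathrm{supp}\,\phi$ a small ball about $(0,-\tfrac32)$, and for parameters $\rho_{j}\in(0,1)$, $M_{j}>0$ to be chosen later put
\[
g_{j}(y,s)=M_{j}\rho_{j}^{-(n+2)}\,\phi\!\Big(\tfrac{y}{\rho_{j}},\tfrac{s-t_{j}}{\rho_{j}^{2}}\Big),\qquad \Psi_{j}=\Phi*g_{j},\qquad u=\sum_{j\ge1}\Psi_{j}.
\]
Each $g_{j}\ge0$ is smooth with compact support, so $\Psi_{j}\in C^{\infty}(\mathbb{R}^{n}\times\mathbb{R})$ and $H\Psi_{j}=g_{j}$; hence $Hu=\sum_{j}g_{j}\ge0$, and the series converges in $C^{\infty}$ on compact subsets of $\mathbb{R}^{n}\times(0,1)$ once $\sum_{j}M_{j}<\infty$, because on such a compact set all but finitely many of the sets $\mathrm{supp}\,g_{j}$ lie beneath it at positive distance and there $\Psi_{j}$ and all its derivatives are $\lesssim M_{j}$. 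Also $u>0$ throughout $\mathbb{R}^{n}\times(0,1)$, since the supports of the $g_{j}$ accumulate at $t=0$, so every point has a source strictly below it.

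By the parabolic change of variables $(x,t)=(\rho_{j}\xi,\,t_{j}+\rho_{j}^{2}\tau)$ one gets the exact identities $\Psi_{j}=M_{j}\rho_{j}^{-n}W(\xi,\tau)$ and $\Phi^{\alpha/n}*\Psi_{j}^{\lambda}=M_{j}^{\lambda}\rho_{j}^{\,n+2-\alpha-n\lambda}V(\xi,\tau)$, where $W:=\Phi*\phi$ and $V:=\Phi^{\alpha/n}*W^{\lambda}$ are fixed functions, strictly positive and locally bounded below at all times exceeding the least time in $\mathrm{supp}\,\phi$. In particular $u(0,t_{j})\ge\Psi_{j}(0,t_{j})=M_{j}\rho_{j}^{-n}W(0,0)=:c_{0}h_{j}$ with $h_{j}:=M_{j}\rho_{j}^{-n}$, $c_{0}=W(0,0)>0$. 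On the \emph{bulk} of $\mathrm{supp}\,g_{j}$ (where $W,V$ are bounded below) $Hu=g_{j}\lesssim M_{j}\rho_{j}^{-(n+2)}=h_{j}\rho_{j}^{-2}$, while $u\ge\Psi_{j}$ and the lower heat‑potential estimates behind Lemma \ref{lem2.10} give
\[
(\Phi^{\alpha/n}*u^{\lambda})\,u^{\sigma}\ \gtrsim\ M_{j}^{\lambda}\rho_{j}^{\,n+2-\alpha-n\lambda}\cdot h_{j}^{\sigma}\ =\ h_{j}^{\lambda+\sigma}\,\rho_{j}^{\,n+2-\alpha}.
\]
So \eqref{3.7} holds on the bulk provided $h_{j}^{\lambda+\sigma-1}\gtrsim\rho_{j}^{-(n+4-\alpha)}$, and since $\sigma>G_\alpha(\lambda)$ forces $\lambda+\sigma>\tfrac{2(n+2)-\alpha}{n}>\tfrac{n+2}{n}>1$, the choice $h_{j}:=\rho_{j}^{-(n+4-\alpha)/(\lambda+\sigma-1)}$ gives $M_{j}=h_{j}\rho_{j}^{n}=\rho_{j}^{\,[\,n(\lambda+\sigma)-2(n+2)+\alpha\,]/(\lambda+\sigma-1)}$, whose exponent is \emph{positive}. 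Hence $M_{j}\to0$ while $h_{j}\to\infty$ as $\rho_{j}\to0$, so one may take $t_{j}:=2^{-j}$ and then $\rho_{j}\downarrow0$ so rapidly that simultaneously $c_{0}h_{j}\ge j\varphi(t_{j})$, the $\mathrm{supp}\,g_{j}$ are pairwise disjoint subsets of $\mathbb{R}^{n}\times(0,1)$, $\sum_{j}M_{j}<\infty$, and $\sum_{j}\|\Psi_{j}\|_{L^{\lambda}(\mathbb{R}^{n}\times(0,1))}^{\min(1,\lambda)}<\infty$; the last item uses $\lambda<\tfrac{n+2}{n}$, splitting $\Psi_{j}$ into its peak region (measure $\sim\rho_{j}^{n+2}$, $\Psi_{j}\lesssim M_{j}\rho_{j}^{-n}$, contribution $\lesssim M_{j}^{\lambda}\rho_{j}^{\,n+2-n\lambda}$ with $n+2-n\lambda>0$) and its far field ($\Psi_{j}\lesssim M_{j}\Phi(\cdot-(0,t_{j}),\cdot)$, whose $L^{\lambda}$ norm over $\mathbb{R}^{n}\times(0,1)$ is $\lesssim M_{j}$ precisely because $\lambda<\tfrac{n+2}{n}$). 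With these choices $u$ satisfies \eqref{3.6}, \eqref{3.7} is trivial off $\bigcup_{j}\mathrm{supp}\,g_{j}$ ($Hu=0$, right side $\ge0$), and $u(0,t_{j})\ge j\varphi(t_{j})$.

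What remains is \eqref{3.7} near the \emph{least‑time edge} of each $\mathrm{supp}\,g_{j}$ — in rescaled variables, near the lowest time in $\mathrm{supp}\,\phi$. There $g_{j}$ (hence $Hu$) vanishes only to some finite order $N$, whereas $W=\Phi*\phi$ necessarily vanishes there to order $N+1$ and $V$ to higher order still, so $g_{j}\lesssim VW^{\sigma}$ \emph{fails} whenever $\lambda+\sigma>1$ and the lower bound for $u$ used on the bulk must be supplemented there; I expect this to be the main obstacle. The remedy is the ``sequence of smoothly connected peaks'' of the Introduction: one replaces each single bump $g_{j}$ by a connected, funnel‑shaped source whose spatial scale decreases gradually from $\rho_{j}$ near $t_{j}$ (where the tall peak sits) down to $\rho_{j+1}$ near $t_{j+1}$, and places these so that the narrow lower end of the $j$‑th source meets the top of the $(j+1)$‑st, the pieces tiling a time‑interval below $t_{1}$. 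Because consecutive sources then have comparable scales where they abut, the potential of the source just below keeps $u$ bounded below on the least‑time edge of $\mathrm{supp}\,g_{j}$ by a positive power of the astronomically large $h_{j+1}$; letting $g$ vanish to high order along these edges so that $Hu$ is negligible there, this bound dominates the right side of \eqref{3.7} by the same exponent arithmetic as on the bulk. Carrying out the matching of scales, masses and vanishing orders, and re‑checking \eqref{3.7} uniformly on the funnel pieces via Lemmas \ref{lem2.6}, \ref{lem2.7}, \ref{lem2.10} and the inequality $\sigma>G_\alpha(\lambda)$, is the bulk of the technical work; everything else is the bookkeeping sketched above.
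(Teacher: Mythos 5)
Your proposal gets the big picture right: build $u$ as a sum of heat potentials of compactly supported nonnegative sources placed at times $t_j\downarrow 0$, so that $Hu\ge 0$ is automatic, and use parabolic scaling to reduce the inequality $Hu\le C(\Phi^{\alpha/n}*u^\lambda)u^\sigma$ near each peak to a single exponent inequality driven by $\sigma>G_\alpha(\lambda)$. You also correctly and honestly identify the real obstruction: at the earliest-time boundary of each source's support, $Hu$ vanishes only to the (finite) order of the cutoff, while any local lower bound for $u$ produced by that same source vanishes faster, so the inequality cannot hold using only $\Psi_j$ itself.

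Where the proposal falls short is in how it resolves that obstruction. Your proposed remedy — abutting funnel-shaped sources so that the source just below ``reheats'' the edge of the one above — is a genuinely different mechanism, and you have not carried it out; the matching of scales, masses and vanishing orders that you flag as ``the bulk of the technical work'' is precisely where the proof must close, and there is a real tension between making the sources touch in time and making the masses $M_j$ summable while $h_j=M_j\rho_j^{-n}$ grows faster than $\varphi(t_j)$. The paper avoids the whole issue by a much simpler device that your construction omits: it adds a fixed multiple of the heat kernel to the sum, $u=(M+1)\Phi+\sum_j u_j$, and chooses $M$ (independent of $j$) so that $M\Phi(x,t)\ge 2w_j(t_j)$ for all $(x,t)\in\Omega_j$ (inequality \eqref{3.21}, which is possible because $\sup_{\Omega_j}|x|^2/t\to 0$). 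Combined with the heat-potential lower bound $u_j\ge \beta w_j(t)-2w_j(t_j)$ on $\Omega_j$, this yields the uniform lower bound $u\ge \Phi+\beta w_j(t)$ on all of $\Omega_j$, edges included; since $w_j$ is bounded away from zero on $\Omega_j$, the factor $u^\sigma$ never collapses and the edge problem disappears. (The paper's sources are indeed funnel-shaped in the sense that $\omega_j=\{|y|<\sqrt{a_j-s},\ t_j<s<a_j\}$, but they have pairwise disjoint supports; the ``connectedness'' comes from the global background $(M+1)\Phi$, not from making consecutive funnels touch.) As written, your proof has a genuine gap at the step you yourself flagged, and the gap is closed in the paper by an idea — the global $\Phi$-cushion — that your construction does not contain.

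A secondary remark: in the paper the parameters $T_j$, $t_j$, $a_j$ are chosen more carefully than your $t_j=2^{-j}$. Defining $t_j$ by $w_j(t_j)=t_j^{-n/2}$ with $w_j(t)=(T_j-t)^{-p}$, $p=(n-\varepsilon)/2<n/2$, is what makes $(T_j-t_j)/t_j\to 0$ and hence makes the estimate $M\Phi\ge 2w_j(t_j)$ on $\Omega_j$ possible with $M$ independent of $j$; with a generic geometric choice of $t_j$ and arbitrary $\rho_j$ you would still need to verify this compatibility.
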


 \begin{proof}
   By scaling $u$ and noting by \eqref{3.5} that
   $\sigma+\lambda\not= 1$ we see that it suffices to prove Theorem
   \ref{thm3.2} with \eqref{3.7} replaced with the weaker statement
   that there exists a positive constant
   $C=C(n,\lambda,\sigma,\alpha)$ such that $u$ satisfies
  \begin{equation}\label{3.9}
   0\leq Hu\leq C(\Phi^{\alpha/n}*u^\lambda )u^\sigma \quad\text{in }\mathbb{R}^n \times(0,1)
  \end{equation}
 where $*$ is the convolution operation in $\mathbb{R}^n \times(0,1)$.
 
 By \eqref{3.5} there exists $\varepsilon=\varepsilon(n,\lambda,\sigma,\alpha)\in(0,1)$ such that
 \begin{equation}\label{3.10}
  \sigma>\frac{2(n+2)-\alpha}{n-\varepsilon}-\lambda.
 \end{equation}
 Let
 \begin{equation}\label{3.11}
  p=\frac{n-\varepsilon}{2}
 \end{equation}
 and let $\{T_j \}\subset(0,1)$ be a sequence that $T_j \to0$ as $j\to\infty$.  Define $w_j :(-\infty,T_j )\to(0,\infty)$ by
 \begin{equation}\label{3.12}
  w_j (t)=(T_j -t)^{-p}
 \end{equation}
 and define $t_j \in(0,T_j )$ by
 \begin{equation}\label{3.13}
  w_j (t_j )=t^{-n/2}_{j}.
 \end{equation}
 Then
 \begin{equation}\label{3.14}
  \frac{T_j -t_j}{t_j}=\frac{w_j (t_j )^{-1/p}}{t_j}=t^{n/(2p)-1}_{j}\to0\quad\text{as }j\to\infty
 \end{equation}
 by \eqref{3.11}.
 
 Choose $a_j \in((t_j +T_j )/2,T_j )$ such that $w_j (a_j )>j\varphi(a_j )$.  Then
 \begin{equation}\label{3.15}
  \frac{w_j (a_j )}{\varphi(a_j )}\to\infty\quad\text{as }j\to\infty.
 \end{equation}
 Let $h_j (s)=\sqrt{a_j -s}$ and $H_j (s)=\sqrt{a_j +\varepsilon_j -s}$ where $\varepsilon_j >0$ satisfies
 \begin{equation}\label{3.16}
  a_j +2\varepsilon_j <T_j,
\quad t_j -\varepsilon_j >t_j /2,
\quad \varepsilon_j <T^{2}_{j},
\quad\text{and}\quad w_j (t_j -\varepsilon_j )>\frac{w_j (t_j )}{2}.
 \end{equation}
 Define 
 \begin{align*}
  &\omega_j =\{(y,s)\in\mathbb{R}^n \times\mathbb{R}:|y|<h_j (s)\quad\text{and}\quad t_j <s<a_j \}\\
  &\Omega_j =\{(y,s)\in\mathbb{R}^n \times\mathbb{R}:|y|<H_j(s)\quad\text{and}\quad t_j -\varepsilon_j <s<a_j +\varepsilon_j \}.
 \end{align*}
 By taking a subsequence we can assume the sets $\Omega_j$ are pairwise disjoint.
 
 Let $\chi_j :\mathbb{R}^n \times\mathbb{R}\to[0,1]$ be a $C^\infty$ function such that $\chi_j \equiv1$ in $w_j$ and $\chi_j \equiv0$ in $\mathbb{R}^n \times\mathbb{R}\backslash\Omega_j$.
 Define $f_j ,\,u_j :\mathbb{R}^n \times\mathbb{R}\to[0,\infty)$ by
 \begin{equation}\label{3.17}
  f_j (y,s)=\chi_j (y,s)w^{\prime}_{j}(s)
 \end{equation}
 and
 \begin{equation}\label{3.18}
  u_j (x,t)=\iint_{\mathbb{R}^n \times\mathbb{R}}\Phi(x-y,t-s)f_j (y,s)\,dy\,ds.
 \end{equation}
 Then $f_j$ and $u_j$ are $C^\infty$ and
 \begin{equation}\label{3.19}
  Hu_j =f_j \quad\text{in }\mathbb{R}^n \times\mathbb{R}.
 \end{equation}
 By Theorem \ref{thmB.2} with $p=n+2$ and $q=\infty$ we see that
 \begin{align}\label{3.20}
  \notag\|\iint_{\Omega_j \backslash \omega_j}&\Phi(x-y,t-s)w^{\prime}_{j}(s)\,dy\,ds\|_{L^\infty (\mathbb{R}^n \times(0,1))}\\
  \notag&=\|\iint_{\mathbb{R}^n \times(0,1)}\Phi(x-y,t-s)\chi_{\Omega_j \backslash \omega_j}(y,s)w^{\prime}_{j}(s)\,dy\,ds\|_{L^\infty (\mathbb{R}^n \times(0,1))}\\
  \notag&\leq C_n \| w^{\prime}_{j}(s)\|_{L^{n+2}(\Omega_j \backslash \omega_j )}\\
  &\leq w_j (t_j )
 \end{align}
 provided we decrease $\varepsilon_j$ if necessary because $|\Omega_j \backslash \omega_j |\to0$ as $\varepsilon_j \to0$.
 
 Also, for $(x,t)\in\Omega_j$ we have $|x|<\sqrt{T_j -t_j}$ by \eqref{3.16}$_1$, and thus using \eqref{3.16}$_2$ we obtain
 $$\sup_{(x,t)\in\Omega_j}\frac{|x|^2}{t}\leq\frac{T_j -t_j}{t_j -\varepsilon_j}\leq\frac{2(T_j -t_j )}{t_j}\to0\quad\text{as }j\to\infty$$
 by \eqref{3.14}.  Hence by \eqref{3.14}, \eqref{3.16}$_2$, and
 \eqref{3.13} there exists a positive number $M$, independent of $j$,
 such that
 \begin{equation}\label{3.21}
  M\Phi(x,t)\geq 2/t^{n/2}_{j}=2w_j (t_j)\quad\text{for }(x,t)\in\Omega_j .
 \end{equation}

 In order to obtain a lower bound for $u_j$ in $\Omega_j$, note first
 that for $s<t\leq a_j +\varepsilon_j$ and $|x|\leq H_j (t)$ we have by
 Lemma \ref{lem2.10} that
 \begin{equation}\label{3.22}
  \int_{|y|<H_j (s)}\Phi(x-y,t-s)\, dy\geq\beta
 \end{equation}
 for some constant
 \begin{equation}\label{3.23}
  \beta=\beta(n)\in(0,1).
 \end{equation}
 Next using \eqref{3.22} and \eqref{3.23}, we find for $(x,t)\in\Omega_j$ that
 \begin{align*}
  \iint_{\Omega_j}\Phi(x-y,t-s)w^{\prime}_{j}(s)\,dy\,ds&=\int^{t}_{t_j -\varepsilon_j}w^{\prime}_{j}(s)\left(\int_{|y|<H_j (s)}\Phi(x-y,t-s)dy\right)ds\\
  &\geq\beta(w_j (t)-w_j (t_j -\varepsilon_j ))\\
  &\geq\beta w_j (t)-w_j (t_j ).
 \end{align*}
 It therefore follows from \eqref{3.17}, \eqref{3.18}, and \eqref{3.20} that for $(x,t)\in\Omega_j$ we have
 \begin{align}\label{3.24}
  \notag u_j (x,t)&\geq\iint_{\omega_j}\Phi(x-y,t-s)w^{\prime}_{j}(s)\,dy\,ds\\
  \notag &=\iint_{\Omega_j}\Phi(x-y,t-s)w^{\prime}_{j}(s)\,dy\,ds
-\iint_{\Omega_j \backslash \omega_j}\Phi(x-y,t-s)w^{\prime}_{j}(s)\,dy\,ds\\
  &\geq\beta w_j (t)-2w_j (t_j ).
 \end{align}
 Also by \eqref{3.17}, \eqref{3.12}, and \eqref{3.16} we obtain
 \begin{align}\label{3.25}
  \notag \iint_{\mathbb{R}^n \times\mathbb{R}}f_j (y,s)\,dy\,ds&\leq\iint_{\Omega_j}w^{\prime}_{j}(s)\,dy\,ds\\
  \notag &\leq p\int^{T_j}_{0}(T_j -s)^{-(p+1)}\left(\int_{|y|<\sqrt{T_j -s}}dy\right)ds\\
  \notag &=p|B_1(0)|\int^{T_j}_{0}(T_j -s)^{n/2-p-1}ds\\
  &=p|B_1 (0)|\int^{T_j}_{0}\tau^{n/2-p-1}d\tau\to0\quad\text{as }j\to\infty
 \end{align}
 by \eqref{3.11}.  Hence for $1\leq\lambda<(n+2)/n$ it follows from \eqref{3.18} and Theorem \ref{thmB.2} that
 \begin{equation}\label{3.26}
  \| u_j \|_{L^\lambda (\mathbb{R}^n \times(0,1))}\to0\quad\text{as }j\to\infty.
 \end{equation}

 We next prove \eqref{3.26} when
 \begin{equation}\label{3.27}
  0<\lambda<1.
 \end{equation}
 (Theorem \ref{thmB.2} cannot be directly used in this case.)  Choose $z_0 >1$ such that the expression $z^{n/2}e^{-z/4}$ is decreasing on the interval $z_0 \leq z<\infty$.  Let 
 $r_0 =\sqrt{z_0}+1$.  Then $r_0 >2$ and by \eqref{3.17} and \eqref{3.18} we have
 \begin{align}\label{3.28}
  \notag\iint_{\mathbb{R}^n \times(0,1)}u_j (x,t)^\lambda \,dx\,dt&=\iint_{\mathbb{R}^n \times(0,1)}\left(\iint_{\mathbb{R}^n \times(0,1)}\Phi(x-y,t-s)f_j (y,s)\,dy\,ds\right)^\lambda \,dx\,dt\\
  &=I_j +J_j 
 \end{align}
 where
 $$I_j :=\iint_{B_{r_0}(0)\times(0,1)}\left(\iint_{\mathbb{R}^n \times(0,1)}\Phi(x-y,t-s)f_j (y,s)\,dy\,ds\right)^\lambda \,dx\,dt$$
 and
 $$J_j :=\iint_{(\mathbb{R}^n \backslash B_{r_0}(0))\times(0,1)}\left(\iint_{\Omega_j}\Phi(x-y,t-s)f_j (y,s)\,dy\,ds\right)^\lambda \,dx\,dt.$$
 By \eqref{3.27} and H\"older's inequality
 \begin{align}\label{3.29}
  \notag I_j &\leq\left(\iint_{B_{r_0}(0)\times(0,1)}\,dx\,dt\right)^{1-\lambda}\left(\iint_{B_{r_0}(0)\times(0,1)}\left(\iint_{\mathbb{R}^n \times(0,1)}\Phi(x-y,t-s)f_j (y,s)\,dy\,ds\right)\,dx\,dt\right)^\lambda \\
  &\to0\quad\text{as }j\to\infty
 \end{align}
 by \eqref{3.25} and Theorem \ref{thmB.2} with $p=q=1$.  Also
 \begin{equation}\label{3.30}
  J_j \leq\iint_{(\mathbb{R}^n \backslash B_{r_0}(0))\times(0,1)}A_j (x,t)^\lambda \,dx\,dt\| f_j \|^{\lambda}_{L^1 (\Omega_j )}
 \end{equation}
 where
 $$A_j (x,t):=\max_{(y,s)\in\Omega_j ,\,s<t}\Phi(x-y,t-s).$$
 For $s<t,\,(y,s)\in\Omega_j$ and $(x,t)\in(\mathbb{R}^n \backslash B_{r_0}(0))\times(0,1)$ we have $0<s<t<1$ and
 $$|x-y|>|x|-|y|>|x|-1.$$
 Thus
 \begin{align}\label{3.31}
  \notag (4\pi)^{n/2}\Phi(x-y,t-s)&\leq\frac{1}{(t-s)^{n/2}}e^{-\frac{(|x|-1)^2}{4(t-s)}}\\
  &=\frac{1}{(|x|-1)^n}\left(\frac{(|x|-1)^2}{t-s}\right)^{n/2}e^{-\frac{(|x|-1)^2}{4(t-s)}}.
 \end{align}
 Since $|x|\geq r_0$ and $0<s<t<1$ we have
 $$\frac{(|x|-1)^2}{t-s}>(|x|-1)^2 \geq z_0$$
 and thus by the definition of $z_0$ we obtain from \eqref{3.31} that
 \begin{align*}
  (4\pi)^{n/2}\Phi(x-y,t-s)&\leq\frac{1}{(|x|-1)^n}((|x|-1)^2 )^{n/2}e^{-(|x|-1)^2 /4}\\
  &=e^{-(|x|-1)^2 /4}.
 \end{align*}
 Hence
 $$A_j (x,t)^\lambda \leq e^{-\lambda(|x|-1)^2 /4}\quad\text{for }(x,t)\in(\mathbb{R}^n \backslash B_{r_0}(0))\times(0,1).$$
 It therefore follows from \eqref{3.30} and \eqref{3.25} that $J_j \to0$ as $j\to\infty$ which together with \eqref{3.29} and \eqref{3.28} yields \eqref{3.26} when $\lambda$ satisfies
 \eqref{3.27}.
 
 By \eqref{3.25} we find that
 $$\iint_{\mathbb{R}^n \times\mathbb{R}}\sum^{\infty}_{j=1}f_j (y,s)\,dy\,ds<\infty$$
 provided we take a subsequence if necessary.  Hence, since the $C^\infty$
 functions 
$f_j$ have disjoint supports, we see that the function 
 $u:(\mathbb{R}^n \times\mathbb{R})\backslash \{(0,0)\}\to[0,\infty)$ defined by
 \begin{equation}\label{3.32}
  u(x,t)=(M+1)\Phi(x,t)+\sum^{\infty}_{j=1}u_j (x,t)
 \end{equation}
 is $C^\infty$ and by \eqref{3.18} we have
 \begin{equation}\label{3.33}
  Hu=\sum^{\infty}_{j=1}f_j \quad\text{in }(\mathbb{R}^n \times\mathbb{R})\backslash \{(0,0)\}
 \end{equation}
 $$u\equiv0\quad\text{in }\mathbb{R}^n \times(-\infty,0).$$
 From \eqref{3.26} we have
 $$u\in L^\lambda (\mathbb{R}^n \times(0,1))$$
 provided we take a subsequence of $u_j$ if necessary.  Thus \eqref{3.6} holds.
 
 We now prove \eqref{3.9}.  By \eqref{3.33} and \eqref{3.17} we have $Hu\equiv0$ in $(\mathbb{R}^n \times(0,1))\backslash \bigcup^{\infty}_{j=1}\Omega_j .$  Hence to prove \eqref{3.9}, it
 suffice to prove there exists a positve constant 
$C=C(n,\lambda,\sigma,\alpha)$ such that
 \begin{equation}\label{3.34}
  0\leq Hu\leq C(\Phi^{\alpha/n}*u^\lambda )u^\sigma \quad\text{in }\Omega_j
 \end{equation}
 for $j=1,2,...$.
 
 By \eqref{3.32}, \eqref{3.24}, and \eqref{3.21} we have for $(x,t)\in\Omega_j$ that
 \begin{align}\label{3.35}
  \notag u(x,t)&\geq(M+1)\Phi(x,t)+\beta w_j (t)-2w_j (t_j )\\
  &\geq\Phi(x,t)+\beta w_j (t).
 \end{align}
 Thus for $(x,t)\in\Omega_j$ we see by \eqref{3.33}, \eqref{3.17}, and \eqref{3.12} that
 \begin{align*}
  Hu(x,t)&=f_j (x,t)\leq w^{\prime}_{j}(t)=pw_j (t)^{1+1/p}\\
  &=pw_j (t)^{1+1/p-\sigma}w_j (t)^\sigma 
\leq\frac{p}{\beta^\sigma}w_j (t)^{1+1/p-\sigma}u(x,t)^\sigma.
 \end{align*}
 Hence to prove \eqref{3.34} it suffices to show
 \begin{equation}\label{3.36}
  w_j (t)^{1+1/p-\sigma}<C\iint_{\mathbb{R}^n  \times(0,1)}\Phi(x-y,t-s)^{\alpha/n}u(y,s)^\lambda \,dy\,ds\quad\text{for }(x,t)\in\Omega_j .
 \end{equation}
 Our proof of \eqref{3.36} consists of two cases.\\
 \noindent \textbf{Case I.} Suppose
 \begin{equation}\label{3.37}
  (x,t)\in\Omega_j \quad\text{and}\quad t\leq\frac{T_j +t_j}{2}.
 \end{equation}
 Then using \eqref{3.16}, \eqref{3.11}, and the fact that $w_j$ is an increasing function we have
 $$\frac{1}{2}\leq\frac{w_j (t)}{2w_j (t_j -\varepsilon_j )}\leq\frac{w_j (t)}{w_j (t_j )}\leq\left(\frac{T_j -\frac{T_j +t_j}{2}}{T_j -t_j}\right)^{-p}=2^p <2^{n/2}.$$
 Also by \eqref{3.13} and \eqref{3.14}
 $$\frac{w_j (t_j )}{T^{-n/2}_{j}}=\left(\frac{T_j}{t_j}\right)^{n/2}\in(1,2)$$
 provided we take a subsequence if necessary.  Thus \eqref{3.37} implies
 \begin{equation}\label{3.38}
  \frac{1}{2}<\frac{w_j (t)}{T^{-n/2}_{j}}<2^{(n+2)/2}.
 \end{equation}
 Next, making the change of variables
 $$x=\sqrt{T_j}\xi,\quad t=T_j \tau,\quad\text{and}\quad
 y=\sqrt{T_j}\eta,\quad s=T_j \zeta,$$
 we get
 \begin{align}\label{3.39}
  \notag \iint_{\mathbb{R}^n \times(0,1)}&\Phi(x-y,t-s)^{\alpha/n}\Phi(y,s)^\lambda \,dy\,ds\\
  \notag &=\iint_{\mathbb{R}^n \times(0,\tau)}\frac{1}{T^{\frac{n}{2}\frac{\alpha}{n}}_{j}}\Phi(\xi-\eta,\tau-\zeta)^{\alpha/n}\frac{1}{T^{n\lambda/2}_{j}}\Phi(\eta,\zeta)^\lambda \sqrt{T_j}^{n+2}\,d\eta\,d\zeta\\
  &\geq\frac{G(\xi,\tau)}{\sqrt{T_j}^{\alpha+n\lambda-(n+2)}}
 \end{align}
 where
 $$G(\xi,\tau):=\iint_{B_1(0)\times(1/2,\tau)}\Phi(\xi-\eta,\tau-\zeta)^{\alpha/n}\Phi(\eta,\zeta)^\lambda \,d\eta\,d\zeta.$$
 Since by \eqref{3.37}$_1$, \eqref{3.16}$_1$, \eqref{3.14}, and
 \eqref{3.16}$_3$,
 $$1>\tau=\frac{t}{T_j}\geq\frac{t_j -\varepsilon_j}{T_j}\to1\quad\text{as }j\to\infty$$
 we have by \eqref{3.37}$_1$ that
 $$|\xi|=\frac{|x|}{\sqrt{T_j}}<\frac{\sqrt{T_j -t}}{\sqrt{T_j}}=\sqrt{1-\frac{t}{T_j}}\to0\quad\text{as }j\to\infty.$$
 Thus, since $G$ is clearly continuous at $(\xi,\tau)=(0,1)$ and $G(0,1)>0$ we have by \eqref{3.39} that
 \begin{equation}\label{3.40}
  \iint_{\mathbb{R}^n \times(0,1)}\Phi(x-y,t-s)^{\alpha/n}\Phi(y,s)^\lambda \,dy\,ds\geq\frac{C}{\sqrt{T_j}^{\alpha+n\lambda-(n+2)}}\quad\text{for }(x,t)\in\Omega_j,
 \end{equation}
 where $C:=G(0,1)/2>0$, provided we take a subsequence if necessary.
 
 Since by \eqref{3.10} and \eqref{3.11},
 \begin{align*}
  \sigma&>\frac{n+2}{n-\varepsilon}+\frac{n+2-\alpha}{n-\varepsilon}-\lambda\\
  &>\frac{n+2-\varepsilon}{n-\varepsilon}+\frac{n+2-\alpha}{n}-\lambda\\
  &=\frac{1}{p}+1+\frac{n+2-\alpha}{n}-\lambda
 \end{align*}
 we have
 \begin{align*}
  \left(\frac{1}{p}+1-\sigma\right)\frac{n}{2}&<\frac{n}{2}\left(\lambda-\frac{n+2-\alpha}{n}\right)\\
  &=\frac{\alpha+n\lambda-(n+2)}{2}.
 \end{align*}
 Thus \eqref{3.36} follows from \eqref{3.35}, \eqref{3.38}, and \eqref{3.40}.\\
 \textbf{Case II.} Suppose
 \begin{equation}\label{3.41}
  (x,t)\in\Omega_j \quad\text{and}\quad t\geq\frac{T_j +t_j}{2}.
 \end{equation}
 Then for $s<t$ we have by Lemma \ref{lem2.10} with
 $T=a_j+\varepsilon_j$ that
 $$\int_{|y|<H_j (s)}\Phi(x-y,t-s)^{\alpha/n}dy\geq\frac{C}{(t-s)^{(\alpha-n)/2}}$$ 
 for some positive constant $C=C(n,\alpha)$.  Thus for $(x,t)$ satisfying \eqref{3.41} we get
 \begin{align}\label{3.42}
  \notag \iint_{\Omega_j}\Phi&(x-y,t-s)^{\alpha/n}w_j (s)^\lambda \,dy\,ds\geq\int^{t}_{t_j}w_j (s)^\lambda \left(\int_{|y|<H_j (s)}\Phi(x-y,t-s)^{\alpha/n}dy\right)ds\\
  \notag &\geq C\int^{t}_{t_j}\frac{ds}{(t-s)^a (T_j -s)^b}\quad
           \text{where }a=(\alpha-n)/2 \text{ and } b=\lambda p\\
  \notag &=\frac{C}{(T_j -t)^{a+b-1}}\int^{\frac{T_j -t_j}{T_j -t}}_{1}\frac{dz}{(z-1)^a z^b}\text{ under the change of variables }T_j -s=(T_j -t)z\\
  &\geq\frac{C}{(T_j -t)^{a+b-1}}\int^{2}_{1}\frac{dz}{(z-1)^a z^b}
=\frac{C}{(T_j -t)^{(\alpha-n)/2+\lambda p-1}}.
 \end{align}

 Since by \eqref{3.10} and \eqref{3.11}
 \begin{align*}
  p(\sigma+\lambda-1)&>\frac{n-\varepsilon}{2}\left(\frac{2(n+2)-\alpha}{n-\varepsilon}-1\right)\\
  &=n+2-\frac{\alpha}{2}-\frac{n}{2}+\frac{\varepsilon}{2}\\
  &>\frac{n-\alpha}{2}+2,
 \end{align*}
 we have
 $$p(1+\frac{1}{p}-\sigma)<\frac{\alpha-n}{2}+\lambda p-1.$$
 Thus \eqref{3.36} follows from \eqref{3.12}, \eqref{3.35}, and \eqref{3.42}.  This completes the proof of \eqref{3.36} in all cases.  Hence \eqref{3.34} and \eqref{3.9} hold.
 
 Finally \eqref{3.8} follows from \eqref{3.15} and \eqref{3.35} 
with $(x,t)=(0,a_j)$.
\end{proof}

\section{The case
  $\frac{n+2-\alpha}{n}\leq\lambda<\frac{n+2}{n}$}\label{sec4}

In this section we prove Theorem \ref{thm1.1} when
\begin{equation}\label{4.1}
 \frac{n+2-\alpha}{n}\leq\lambda<\frac{n+2}{n}.
\end{equation}
(For these values of $\lambda$, Theorem \ref{thm1.3} follows from
Theorem \ref{thm3.2} in the last section and Theorems \ref{thm1.2} and
\ref{thm1.4} are vacuously true.)

For $\lambda$ satisfying \eqref{4.1}, Remark \ref{rem2.1} and the following theorem imply Theorem \ref{thm1.1}.

\begin{thm}\label{thm4.1}
 Suppose $u$ is a nonnegative solution of \eqref{2.10},\eqref{2.11} for some constants $\alpha\in(0,n+2)$,
 \begin{equation}\label{4.2}
  \frac{n+2-\alpha}{n}\leq\lambda<\frac{n+2}{n}\quad\text{and}\quad 0\leq\sigma<\frac{2(n+2)-\alpha}{n}-\lambda.
 \end{equation}
 Then
 \begin{equation}\label{4.3}
  \max_{|x|\leq1}u(x,t)=O(t^{-n/2})\quad\text{as }t\to0^+ .
 \end{equation}
\end{thm}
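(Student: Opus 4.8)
The plan is to combine the integral representation of Lemma~\ref{lem2.3} with the heat-potential estimates of Section~\ref{sec2} and Appendix~\ref{secB} and then apply a Moser-type iteration; what changes relative to Section~\ref{sec3} is that here the convolution factor $\Phi^{\alpha/n}*v^\lambda$ need no longer be bounded, so we only extract a gain of integrability from it. Set $v=u+1$, so that \eqref{2.12}--\eqref{2.15} hold by Lemma~\ref{lem2.3}; in particular $Hv,v^\beta\in L^1(B_{\sqrt8}(0)\times(0,8))$ for every $\beta\in[1,(n+2)/n)$, and since $\lambda<(n+2)/n$ by \eqref{4.2} we have $v^\lambda\in L^p(B_{\sqrt8}(0)\times(0,8))$ for some $p>1$. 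By Remark~\ref{rem2.2} it suffices to prove that, uniformly for $|x|\le1$,
\[
w(x,t):=\iint_{|y|<\sqrt8,\ 0<s<8}\Phi(x-y,t-s)Hv(y,s)\,dy\,ds=O(t^{-n/2})\quad\text{as }t\to0^+ .
\]
Splitting the $y$-integral at $|y|=3/2$: for $|x|\le1$ and $|y|\ge3/2$ one has $|x-y|\ge1/2$, so $\Phi(x-y,t-s)\le C(n)$ whenever $s<t$ (as in Lemma~\ref{lem2.8}), and this part of $w$ is therefore at most $C(n)\,\|Hv\|_{L^1(B_{\sqrt8}(0)\times(0,8))}=O(1)=O(t^{-n/2})$ by \eqref{2.14}.

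For the contribution of $|y|<3/2$ the differential inequality \eqref{2.13} is available (since $3/2<2$): $Hv\le C(\Phi^{\alpha/n}*v^\lambda)v^\sigma$. I would first gain integrability from the convolution factor by applying Theorem~\ref{thmB.2} (with $\alpha$ replaced by $n+2-\alpha$, as in the proof of Theorem~\ref{thm3.1}, using that $\Phi^{\alpha/n}\sim\Phi_{n+2-\alpha}$ by Remark~\ref{rem2}) to $v^\lambda\in L^p$, $p>1$, obtaining $\Phi^{\alpha/n}*v^\lambda\in L^q(B_{\sqrt8}(0)\times(0,8))$ for a suitable $q>1$; here the hypothesis $\lambda\ge(n+2-\alpha)/n$ is exactly what keeps $1/p$ above the borderline value $(n+2-\alpha)/(n+2)$, so that such a $q$ exists. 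Combining this with $v^\sigma\in L^{\beta/\sigma}(B_{\sqrt8}(0)\times(0,8))$ for $\beta$ just below $(n+2)/n$ (from \eqref{2.14}) via H\"older's inequality gives $Hv\in L^r(B_2(0)\times(0,8))$ with $1/r$ as close as desired to $\frac{n(\lambda+\sigma)-(n+2-\alpha)}{n+2}$, and this quantity is $<1$ precisely because $\sigma+\lambda<\frac{2(n+2)-\alpha}{n}$ by \eqref{4.2}. Thus $Hv\in L^r(B_2(0)\times(0,8))$ for some $r>1$; note also that $\sigma<(n+2)/n$ throughout this range of $\lambda$, so the exponent on $v$ is subcritical.

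With $\Phi^{\alpha/n}*v^\lambda\in L^q$ ($q>1$) and $Hv\in L^r$ ($r>1$) in hand, I would feed the inequality $0\le Hv\le C(\Phi^{\alpha/n}*v^\lambda)v^\sigma$ into the Moser iteration of Lemma~\ref{lem4.1} to propagate local integral control of $v$ up to a local sup bound, and then apply this on parabolic cylinders shrinking to $(0,0)$, rescaled to unit size. Since $\|v\|_{L^\lambda}$ over such cylinders tends to $0$ (because $v\in L^\lambda(\R^n\times(0,16))$) and the $\mu$-term in the representation \eqref{2.15} contributes only $O(t^{-n/2})$ (Remark~\ref{rem2.2}), this should produce $\max_{|x|\le1}v(x,t)=O(t^{-n/2})$; since $v=u+1$, \eqref{4.3} follows.

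The hard part is this last step and its interaction with the nonlocal term: $\Phi^{\alpha/n}*v^\lambda$ depends on $v$ over the whole slab $\R^n\times(0,8)$, not on a single local cylinder, so the localization underlying the Moser iteration must be reconciled with the global $L^\lambda$ control and the off-diagonal decay estimates of Lemmas~\ref{lem2.5}--\ref{lem2.7}, and one must extract the sharp exponent $-n/2$ (and not merely some finite power of $t$) from the iteration. This is precisely where both inequalities of \eqref{4.2} enter: the first ($\lambda\ge(n+2-\alpha)/n$) keeps the convolution term just inside the reach of Theorem~\ref{thmB.2} with a workable $q$, and the second ($\sigma+\lambda<\frac{2(n+2)-\alpha}{n}$) forces $Hv\in L^r$ with $r>1$.
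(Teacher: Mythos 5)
Your opening moves — setting $v=u+1$, invoking Lemma~\ref{lem2.3} and Remark~\ref{rem2.2}, peeling off the far-field part of the heat potential with Lemma~\ref{lem2.8}, and observing via Theorem~\ref{thmB.2} and \eqref{2.14} that the hypotheses \eqref{4.2} give $Hv\in L^r(B_2(0)\times(0,8))$ for some $r>1$ — are all correct and are indeed the right ingredients. But there is a genuine structural gap between ``$Hv\in L^r$ for some fixed $r>1$'' and the conclusion $O(t^{-n/2})$, and the place where you defer to ``the hard part'' is exactly where the paper's argument does the actual work.

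The difficulty is this: knowing $Hv\in L^r$ for a fixed $r>1$ does not control $\iint\Phi(x-y,t-s)Hv(y,s)\,dy\,ds$ pointwise near $t=0$ unless $r>\frac{n+2}{2}$ (only then is $\Phi(x-\cdot,t-\cdot)\in L^{r'}$ locally). One therefore has to iterate, but a direct, unrescaled iteration of the form $Hv\in L^{r_0}\Rightarrow v\in L^{q_0}\Rightarrow Hv\in L^{r_1}\Rightarrow\cdots$ breaks down because $v$ is not just the heat potential of $Hv$: the representation \eqref{2.15} also contains the term $\int\Phi(x-y,t)\,d\mu(y)$, which is only $O(t^{-n/2})$ and so is not in the high $L^p$ spaces needed to keep the bootstrap going. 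The paper avoids this obstruction by arguing by contradiction: it assumes $t_j^{n/2}v(x_j,t_j)\to\infty$ for some sequence $(x_j,t_j)\to(x_0,0)$, zooms in on the parabolic cylinders $\mathcal{P}_{t_j/4}(x_j,t_j)$, and rescales $Hv$ to unit cylinders, defining $f_j(\eta,\zeta)=r_j^{(n+2)/2}Hv(x_j+\sqrt{r_j}\eta,t_j+r_j\zeta)$. After this rescaling the $\mu$-term contributes only an $O(1)$ constant (this is exactly what the factors of $(Rt_j)^{n/2}$ cancel against), the differential inequality becomes the dimensionless estimate \eqref{4.18}, and the contradiction hypothesis becomes the pair of facts \eqref{4.14}--\eqref{4.15}: $\{f_j\}$ tends to zero in $L^1(\mathcal{P}_4(0,0))$ while $\iint\Phi(-\eta,-\zeta)f_j\to\infty$. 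Lemma~\ref{lem4.1} then bootstraps boundedness of $\{f_j\}$ from $L^1$ up to $L^\infty$ in finitely many steps (the point of the uniform gain $C_0$ in \eqref{4.21}), and together with Lemma~\ref{lem2.8} this contradicts the divergence \eqref{4.15}. Without this contradiction-and-rescaling scaffolding, Lemma~\ref{lem4.1} has nothing to act on.

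A related confusion: you invoke ``$\|v\|_{L^\lambda}$ over shrinking cylinders tends to $0$'' as the seed for the iteration, but that is the seed for the iteration in Theorem~\ref{thm5.1} (Lemma~\ref{lem5.2}, which acts on the rescaled $v_j$ and uses the extra vanishing factor $\hat\varepsilon_j\to0$ coming from the stronger hypothesis $\sigma<1-\frac{\alpha-2}{n+2}\lambda$). In Theorem~\ref{thm4.1} the iteration acts on $f_j$, not $v_j$, its seed is the $L^1$ bound on the rescaled $Hv$ coming from \eqref{2.14}, and the target exponent $-n/2$ (rather than $-(n+2)/(2\lambda)$) reflects precisely that one bounds the $L^1$ mass of $Hv$ against the $\|\Phi(\cdot,t)\|_\infty\sim t^{-n/2}$ scale, not the $L^\lambda$ mass of $v$ against $t^{-(n+2)/(2\lambda)}$. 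So the proposal has the right toolbox and correctly verifies that both inequalities in \eqref{4.2} are used, but it is missing the contradiction argument that makes the iteration well-posed, and its stated seed conflates the two different rescalings used in Sections~\ref{sec4} and~\ref{sec5}.
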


\begin{proof}
 Let $v=u+1$.  Then by Lemma \ref{lem2.3} we have that \eqref{2.12}--\eqref{2.15} hold.  To prove \eqref{4.3}, it clearly suffices to prove
 \begin{equation}\label{4.4}
  \max_{|x|\leq1}v(x,t)=O(t^{-n/2})\quad\text{as }t\to0^+ .
 \end{equation}

 Since increasing $\lambda$ or $\sigma$ increases the right side of the second inequality in \eqref{2.13}$_1$, we can assume instead of \eqref{4.2} that
 \begin{equation}\label{4.5}
  \frac{n+2-\alpha}{n}<\lambda<\frac{n+2}{n},\quad \sigma>0,\quad\text{and}\quad 1<\lambda+\sigma<\frac{2(n+2)-\alpha}{n}.
 \end{equation}
 Since the increased value of $\lambda$ is less than $\frac{n+2}{n}$, it follows from \eqref{2.14} that \eqref{2.12} still holds.
 
 By \eqref{4.5} there exists $\varepsilon=\varepsilon(n,\lambda,\sigma,\alpha)\in(0,1)$ such that
 \begin{equation}\label{4.6}
  \left(\frac{n+4-\alpha}{n+4-\alpha-\varepsilon}\right)\frac{n+2-\alpha}{n}<\lambda<\frac{n+2}{n+\varepsilon}\quad\text{and}\quad \lambda+\sigma<\frac{2(n+2)-\alpha}{n+\varepsilon}
 \end{equation}
 which implies
 \begin{equation}\label{4.7}
  \sigma<\frac{2(n+2)-\alpha}{n+\varepsilon}-\lambda<\frac{2(n+2)-\alpha}{n+\varepsilon}-\frac{n+2-\alpha}{n}<\frac{n+2}{n+\varepsilon}.
 \end{equation}

 Suppose for contradition that \eqref{4.4} is false.  Then there is a
 sequence $\{(x_j ,t_j )\}\subset\overline{B_1(0)}\times(0,1)$ and
 $x_0 \in\overline{B_1 (0)}$ such that $(x_j ,t_j )\to(x_0 ,0)$
 as $j\to\infty$ and
 \begin{equation}\label{4.8}
  \lim_{j\to\infty}t^{n/2}_{j}v(x_j ,t_j )=\infty.
 \end{equation}
 By Lemma \ref{lem2.8} we have for $(x,t)\in\mathcal{P}_{t_j /4}(x_j ,t_j )$ that
 $$\iint_{\mathcal{P}_8 (0,8)\backslash\mathcal{P}_{t_j /2}(x_j ,t_j )}\Phi(x-y,t-s)Hv(y,s)\,dy\,ds\leq\frac{C(n)}{t^{n/2}_{j}}\iint_{\mathcal{P}_8 (0,8)}Hv(y,s)\,dy\,ds.$$
 It therefore follows from \eqref{2.14} and Remark \ref{rem2.2} that
 \begin{equation}\label{4.9}
  v(x,t)\leq C\left[\left(\frac{1}{\sqrt{t}}\right)^n +\iint_{\mathcal{P}_{t_j /2}(x_j ,t_j ))}\Phi(x-y,t-s)Hv(y,s)\,dy\,ds\right]\quad\text{for }(x,t)\in\mathcal{P}_{t_j /4}(x_j ,t_j ).
 \end{equation}
 Substituting $x=x_j$ and $t=t_j$ in \eqref{4.9} and using \eqref{4.8} we find that
 \begin{equation}\label{4.10}
  t^{n/2}_{j}\iint_{\mathcal{P}_{t_j /2}(x_j ,t_j )}\Phi(x_j -y,t_j -s)Hv(y,s)\,dy\,ds\to\infty\quad\text{as }j\to\infty.
 \end{equation}
 Also, by \eqref{2.14} we have 
 \begin{equation}\label{4.11}
  \iint_{\mathcal{P}_{t_j /2}(x_j ,t_j )}Hv(y,s)\,dy\,ds\to0\quad\text{as }j\to\infty.
 \end{equation}
 Defining
 \begin{equation}\label{4.12}
  f_j (\eta,\zeta)=r^{\frac{n+2}{2}}_{j}Hv(x_j +\sqrt{r_j}\eta,t_j
  +r_j \zeta)\quad \text{where }r_j =t_j /8
 \end{equation}
 and making the change of variables
 \begin{equation}\label{4.13}
  y=x_j +\sqrt{r_j}\eta,\quad s=t_j +r_j \zeta
 \end{equation}
 in \eqref{4.11} and \eqref{4.10} we get
 
 \begin{equation}\label{4.14}
  \iint_{\mathcal{P}_4 (0,0)}f_j (\eta,\zeta)\,d\eta\,d\zeta\to0\quad\text{as }j\to\infty
 \end{equation}
 and
 \begin{equation}\label{4.15}
  \iint_{\mathcal{P}_4 (0,0)}\Phi(-\eta,-\zeta)f_j (\eta,\zeta)\,d\eta\,d\zeta\to\infty\quad\text{as }j\to\infty.
 \end{equation}
 Let
 $$N(y,s)=\iint_{\mathcal{P}_8
   (0,8)}\Phi(y-\bar y,s-\bar s)Hv(\bar y,\bar s)\,
d\bar y\, d\bar s.$$
 By \eqref{2.14} and Theorem \ref{thmB.2} we find that $N\in L^{\frac{n+2}{n+\varepsilon}}(\mathcal{P}_8 (0,8))$ and thus $N^\lambda \in L^{\frac{n+2}{\lambda(n+\varepsilon)}}(\mathcal{P}_8 (0,8))$.  Hence by H\"older's inequality, \eqref{4.6}, and Lemma \ref{lem2.5} we have for 
 $R\in(0,1]$ and $(x,t)\in\mathcal{P}_{Rt_j /8}(x_j ,t_j )$ that
 \begin{align}\label{4.16}
  \notag &\iint_{\mathcal{P}_8 (0,8)\backslash\mathcal{P}_{Rt_j /4}(x_j ,t_j )}
\Phi(x-y,t-s)^{\alpha/n}N(y,s)^\lambda \,dy\,ds\\
  \notag &\leq\| N^\lambda \|_{L^{\frac{n+2}{\lambda(n+\varepsilon)}}(\mathcal{P}_8 (0,8))}\left(\iint_{\mathbb{R}^n \times\mathbb{R}\backslash\mathcal{P}_{Rt_j /4}(x_j ,t_j )}\Phi(x-y,t-s)^{\frac{\alpha q}{n}}\,dy\,ds\right)^{1/q}\text{ where }\frac{\lambda(n+\varepsilon)}{n+2}+\frac{1}{q}=1\\
  &\leq C\left(\frac{1}{\sqrt{Rt_j}^{\alpha q-(n+2)}}\right)^{1/q}\\
  \label{4.17} &=C\frac{1}{\sqrt{t_j}^{(n+\varepsilon)\lambda-(n+2-\alpha)}}
\end{align}
where $C>0$ depends on $R$ but not on $j$.

 Since by \eqref{2.14} and Lemma \ref{lem2.8} we have
 $$N(y,s)\leq
 C\left[\frac{1}{\sqrt{t_j}^n}+\iint_{(\bar y,\bar s)\in\mathcal{P}_{Rt_j /2}(x_j ,t_j
     )}\Phi(y-\bar y,s-\bar s)Hv(\bar y,\bar s)\,
   d\bar y\, d\bar s\right]\quad\text{for }(y,s)\in P_{Rt_j /4}(x_j ,t_j )$$
 it follows from Lemma \ref{lem2.6} that for $(x,t)\in\mathcal{P}_{Rt_j /4}(x_j ,t_j )$ we have

 \begin{align}
  \iint_{(y,s)\in\mathcal{P}_{Rt_j /4}(x_j,t_j)}&
\Phi(x-y,t-s)^{\alpha/n}N(y,s)^\lambda \,dy\,ds\notag\\
  &\leq C\left[\frac{1}{\sqrt{t_j}^{n\lambda-(n+2-\alpha)}}
+\iint_{(y,s)\in\mathcal{P}_{Rt_j /4}(x_j,t_j)}\Phi(x-y,t-s)^{\alpha/n}\right.\notag\\
  &\quad\left.\times\left(\iint_{(\bar y,\bar s)\in\mathcal{P}_{Rt_j /2}(x_j,t_j )}\Phi(y-\bar y,s-\bar s)Hv(\bar y,\bar s)\,d\bar y\,d\bar
    s\right)^\lambda \,dy\,ds\right].
 \label{4.17.1}
\end{align}
Also by Jensen's inequality, \eqref{4.5} and Lemma \ref{lem2.7} we
 have for $x\in\mathbb{R}^n$, $t>0$, and $\lambda\ge 1$ that
 \begin{align}
  \iint_{\mathbb{R}^n \times\mathbb{R}}&\Phi(x-y,t-s)^{\alpha/n}\left(\int_{|z|<\sqrt{8}}\Phi(y-z,s)\,d\mu(z)\right)^\lambda \,dy\,ds\notag\\
  &\leq C\int_{|z|<\sqrt{8}}\left(\iint_{\mathbb{R}^n \times (0,t)}\Phi(x-y,t-s)^{\alpha/n}\Phi(y-z,s)^{\lambda n/n}\,dy\,ds\right)d\mu(z)\notag\\
  &\leq\frac{C}{\sqrt{t}^{\alpha+\lambda n-(n+2)}}.\label{4.17.2}
 \end{align}

We claim that \eqref{4.17.2} also holds for $0<\lambda<1$. To see
this, let $x\in\R^n$ and $t>0$ be fixed and define 
\[
f(y,s)=\Phi(x-y,t-s)^{\alpha/n} \quad\text{and}\quad 
g(y,s)=\int_{|z|<\sqrt{8}}\Phi(y-z,s)\,d\mu(z).
\]
Then by Lemma \ref{lem2.7} with $\beta=0$ and $\beta=n$ we have
\[
\|f\|_1:=\iint_{\R^n\times(0,t)}\Phi(x-y,t-s)^{\alpha/n}dy\,ds\le C\sqrt{t}^{n+2-\alpha}
\]
and
\[  
\iint_{\R^n\times(0,t)}fg\,dy\,ds
=\int_{|z|<\sqrt{8}}\iint_{\R^n\times(0,t)}\Phi(x-y,t-s)^{\alpha/n}\Phi(y-z,s)\,dy\,ds\,d\mu(z)\le
C\sqrt{t}^{2-\alpha},
\]
respectively, where $C$ depends on neither $x$ nor $t$. Thus by
Jensen's inequality we find for $(x,t)\in\R^n\times(0,\infty)$ and
$0<\lambda<1$ that
\begin{align*}
\iint_{\R^n\times\R}fg^\lambda dy\,ds
&=\iint_{\R^n\times(0,t)}\left(g\|f\|_1^{1/\lambda}\right)^\lambda
\frac{f}{\|f\|_1}\,dy\,ds\\
&\le \left(\iint_{\R^n\times(0,t)}g\|f\|_1^{1/\lambda}
\frac{f}{\|f\|_1}\,dy\,ds\right)^\lambda\\
&=\|f\|_1^{1-\lambda}\left(\iint_{\R^n\times(0,t)}fg\,dy\,ds\right)^\lambda
\le C\sqrt{t}^{n+2-\alpha-\lambda n}.
\end{align*}
That is \eqref{4.17.2} also holds for $0<\lambda<1$.

It therefore follows from \eqref{2.15}, \eqref{4.17}, \eqref{4.17.1}, and
Lemma \ref{lem2.7} that for $(x,t)\in\mathcal{P}_{Rt_j /8}(x_j ,t_j )$ we have
 \begin{align*}
  \int_{(y,s)\in\mathcal{P}_8 (0,8)}&\Phi(x-y,t-s)^{\alpha/n}v(y,s)^\lambda \,dy\,ds\\
  \begin{split}  
  \leq& C\left[\iint_{(y,s)\in\mathcal{P}_8(0,8)}\Phi(x-y,t-s)^{\alpha/n}
\left(\int_{|z|<\sqrt{8}}\Phi(y-z,s)\,d\mu(z)\right)^\lambda \,dy\,ds\right.\\
  &\left.+\iint_{(y,s)\in\mathcal{P}_8 (0,8)}\Phi(x-y,t-s)^{\alpha/n}(N(y,s)+1)^\lambda \,dy\,ds\right]\\
  \leq& C\left[\frac{1}{\sqrt{t_j}^{(n+\varepsilon)\lambda-(n+2-\alpha)}}+\iint_{(y,s)\in\mathcal{P}_{Rt_j /4}(x_j ,t_j )}\Phi(x-y,t-s)^{\alpha/n}\right.\\
  &\times\left.\left(\iint_{(\bar y,\bar s)\in\mathcal{P}_{Rt_j /2}(x_j ,t_j )}\Phi(y-\bar y,s-\bar s)Hv(\bar y,\bar s)\,d\bar y\,d\bar s\right)^\lambda \,dy\,ds\right]
  \end{split}
 \end{align*}
 where $C>0$ depends on $R$ but not on $j$.
 
 Also, similar to the way \eqref{4.9} was derived, we obtain
 $$v(x,t)\leq C\left[\frac{1}{\sqrt{t_j}^n}+\iint_{\mathcal{P}_{Rt_j /4}(x_j ,t_j )}\Phi(x-y,t-s)Hv(y,s)\,dy\,ds\right]\quad\text{for }(x,t)\in P_{Rt_j /8}(x_j ,t_j ).$$
 We see therefore from \eqref{2.13} that for $(x,t)\in\mathcal{P}_{Rt_j /8}(x_j ,t_j )$ and $R\in(0,1]$ we have 
 \begin{align*}
  \begin{split}
   Hv(x,t)&\leq C\left[\frac{1}{\sqrt{t_j}^{(n+\varepsilon)\lambda-(n+2-\alpha)}}+\iint_{(y,s)\in\mathcal{P}_{Rt_j /4}(x_j ,t_j )}\Phi(x-y,t-s)^{\alpha/n}\right.\\
   &\quad\times\left.\left(\iint_{(\bar y,\bar s)\in\mathcal{P}_{Rt_j /2}(x_j ,t_j )}\Phi(y-\bar y,s-\bar s)Hv(\bar y,\bar s)\,d\bar y\,d\bar s\right)^\lambda \,dy\,ds\right]\\
   &\quad\times\left[\frac{1}{\sqrt{t_j}^{n\sigma}}+\left(\iint_{\mathcal{P}_{Rt_j /2}(x_j ,t_j )}\Phi(x-y,t-s)Hv(y,s)\,dy\,ds\right)^\sigma \right].
  \end{split}
 \end{align*}
 Hence under the change of variables \eqref{4.13},
 $$x=x_j +\sqrt{r_j}\xi,\quad t=t_j +r_j \tau,$$
 and
 $$\bar y=x_j +\sqrt{r_j}\bar \eta,\quad \bar s=t_j +r_j \bar \zeta,$$
 we obtain from \eqref{4.12} and \eqref{4.6} that
 \begin{align}\label{4.18}
  \notag &f_j (\xi,\tau)=r^{\frac{n+2}{2}}_{j}Hv(x,t)\leq r^{\frac{(n+\varepsilon)(\lambda+\sigma)-(n+2-\alpha)}{2}}_{j}Hv(x,t)\\
  \notag &\leq C\left[1+\iint_{(\eta,\zeta)\in\mathcal{P}_{4R}(0,0)}\Phi(\xi-\eta,\tau-\zeta)^{\alpha/n}\left(\iint_{(\bar \eta,\bar \zeta)\in\mathcal{P}_{4r}(0,0)}\Phi(\eta-\bar \eta,\zeta-\bar \zeta)
f_j (\bar \eta,\bar \zeta)\,d\bar \eta\,d\bar \zeta\right)^\lambda d\eta\,d\zeta\right]\\
  &\quad\times\left[1+\left(\iint_{\mathcal{P}_{4R}(0,0)}\Phi(\xi-\eta,\tau-\zeta)f_j (\eta,\zeta)\,d\eta\,d\zeta\right)^\sigma \right]
 \end{align}
 for $(\xi,\tau)\in\mathcal{P}_R (0,0)$ and $R\in(0,1]$ where $C>0$
 depends on $R$ but not on $j$.

To complete the proof of Theorem \ref{thm4.1} we will need the following lemma.

\begin{lem}\label{lem4.1}
 Suppose the sequence
 \begin{equation}\label{4.19}
  \{f_j \}\text{ is bounded in }L^p (\mathcal{P}_{4R}(0,0))
 \end{equation}
 for some constants $p\in[1,\frac{n+2}{2}]$ and $R\in(0,1]$.  
Then there exists a positive constant $C_0 =C_0 (n,\lambda,\sigma,\alpha)$ such that the sequence 
\begin{equation}\label{4.20}
 \{f_j \}\text{ is bounded in }L^q (\mathcal{P}_R(0,0))
\end{equation}
for some $q\in(p,\infty)$ satisfying
\begin{equation}\label{4.21}
 \frac{1}{p}-\frac{1}{q}\geq C_0.
\end{equation}
\end{lem}

\begin{proof}
 For $R\in(0,1]$ we formally define operators $N_R$ and $I_R$ by
 \[
(N_R f)(\xi,\tau)=\iint_{\mathcal{P}_{4R}(0,0)}
\Phi(\xi-\eta,\tau-\zeta)f(\eta,\zeta)\,d\eta\,d\zeta
\]
and
\[
(I_R f)(\xi,\tau)=\iint_{\mathcal{P}_{4R}(0,0)}
\Phi(\xi-\eta,\tau-\zeta)^{\alpha/n}f(\eta,\zeta)\,d\eta\,d\zeta.
\]
 Define $p_2$ by
 \begin{equation}\label{4.22}
  \frac{1}{p}-\frac{1}{p_2}=\frac{2-\varepsilon}{n+2}
 \end{equation}
 where $\varepsilon$ is as in \eqref{4.6}.  Then $p_2 \in(p,\infty)$
 and thus by Theorem \ref{thmB.2} we have
 \begin{equation}\label{4.23}
  \|(N_R f_j )^\lambda \|_{p_2 /\lambda}=\| N_R f_j \|^{\lambda}_{p_2}\leq C\| f_j \|^{\lambda}_{p}
 \end{equation}
 and
 \begin{equation}\label{4.24}
  \|(N_R f_j )^\sigma \|_{p_2 /\sigma}=\| N_R f_j \|^{\sigma}_{p_2}\leq C\| f_j \|^{\sigma}_{p}
 \end{equation}
 where $\|\cdot\|_p :=\|\cdot\|_{L^p (\mathcal{P}_{4R}(0,0))}$. Since
 $$\frac{1}{p_2}=\frac{1}{p}-\frac{2-\varepsilon}{n+2}
\leq 1-\frac{2-\varepsilon}{n+2}=\frac{n+\varepsilon}{n+2}$$
 we see by \eqref{4.6} that
 \begin{equation}\label{4.25}
  \frac{p_2}{\lambda}>1.
 \end{equation}
 Now there are two cases to consider.
 \medskip

 \noindent \textbf{Case I.} Suppose
 \begin{equation}\label{4.26}
  \frac{p_2}{\lambda}<\frac{n+2}{n+2-\alpha}.
 \end{equation}
 Define $p_3$ and $q$ by
 \begin{equation}\label{4.27}
  \frac{\lambda}{p_2}-\frac{1}{p_3}=\frac{n+2-\alpha}{n+2}
 \end{equation}
 and
 \begin{equation}\label{4.28}
  \frac{1}{q}:=\frac{1}{p_3}+\frac{\sigma}{p_2}=\frac{\lambda+\sigma}{p_2}-\frac{n+2-\alpha}{n+2}.
 \end{equation}
 It follows from \eqref{4.25}--\eqref{4.28}, \eqref{4.22}, and \eqref{4.5} that
 \begin{equation}\label{4.29}
  1<\frac{p_2}{\lambda}<p_3 <\infty,\quad q>0,
 \end{equation}
 and
 \begin{align*}
  \frac{1}{p}-&\frac{1}{q}
=\frac{1}{p}-\left((\lambda+\sigma)\left(\frac{1}{p}-\frac{2-\varepsilon}{n+2}\right)-\frac{n+2-\alpha}{n+2}\right)\\
  &=\frac{(2-\varepsilon)(\lambda+\sigma)+(n+2-\alpha)}{n+2}-\frac{\lambda+\sigma-1}{p}\\
  &\geq\frac{(2-\varepsilon)(\lambda+\sigma)+(n+2-\alpha)-(n+2)(\lambda+\sigma-1)}{n+2}\\
  &=\frac{2(n+2)-\alpha-(n+\varepsilon)(\lambda+\sigma)}{n+2}.
 \end{align*}
 Thus \eqref{4.21} holds by \eqref{4.6}.
 
 By \eqref{4.27}, \eqref{4.29}, \eqref{4.23}, and Theorem \ref{B.1} we
 find that
 \begin{align*}
  \|(I_R ((N_R &f_j )^\lambda ))^q \|_{p_3/q}=\| I_R ((N_R f_j )^\lambda )\|^{q}_{p_3}\\
  &\leq C\|(N_R f_j )^\lambda \|^{q}_{p_2 /\lambda}\\
  &\leq C\| f_j \|^{\lambda q}_{p}.
 \end{align*}
 Also by \eqref{4.24} we get
 $$\|(N_R f_j )^{\sigma q}\|_{\frac{p_2}{\sigma q}}=\|(N_R f_j )^\sigma \|^{q}_{p_2 /\sigma}\leq C\| f_j \|^{\sigma q}_{p}.$$
 It therefore follows from \eqref{4.18}, \eqref{4.28}, H\"older's inequality, and \eqref{4.19} that \eqref{4.20} holds.
 \medskip

\noindent \textbf{Case II.} Suppose
\begin{equation}\label{4.30} 
\frac{p_2}{\lambda}\geq\frac{n+2}{n+2-\alpha}.
\end{equation} 
Then by Theorem \ref{thmB.2}, \eqref{4.19}, and \eqref{4.23} we
find that the sequence
 \begin{equation}\label{4.31}
  \{I_R ((N_R f_j )^\lambda)\}\text{ is bounded in }L^{\gamma}(\mathcal{P}_{4R}(0,0))
 \quad\text{for all } \gamma\in(1,\infty).
\end{equation}
Let $\hat q=p_2/\sigma$. Then by \eqref{4.22},
\[
\frac{1}{p}-\frac{1}{\hat q}=\frac{1}{p}-\frac{\sigma}{p_2}
=\frac{2-\varepsilon}{n+2}+\frac{1-\sigma}{p_2}.
\] 
Thus for $\sigma\le 1$ we have
\[
\frac{1}{p}-\frac{1}{\hat q}\ge\frac{2-\varepsilon}{n+2}>0
 \]
and for $\sigma>1$ it follows from \eqref{4.30} and \eqref{4.6} that
\begin{align*}
\frac{1}{p}-\frac{1}{\hat q}
&\ge\frac{2-\varepsilon}{n+2}-\frac{\sigma-1}{\frac{(n+2)\lambda}{n+2-\alpha}}\\
&\ge \frac{2-\varepsilon}{n+2}
-\frac
{\frac{2(n+2)-\alpha}{n}-\lambda-1}
{\frac{(n+2)\lambda}{n+2-\alpha}}\\
&=\frac{n+4-\alpha-\varepsilon}{(n+2)\lambda}
\left(\lambda-\left(\frac{n+4-\alpha}{n+4-\alpha-\varepsilon}\right)
\frac{n+2-\alpha}{n}\right)>0. 
\end{align*}
Thus defining $q\in(p,\hat q)$ by
\[
\frac{1}{q}=\frac{\frac{1}{p}+\frac{1}{\hat q}}{2}
\]
we have for $\sigma>0$ that
\[
\frac{1}{p}-\frac{1}{q}=\frac{1}{2}\left(\frac{1}{p}-\frac{1}{\hat
    q}\right)
\ge C_0(n,\lambda,\sigma,\alpha)>0.
\]
That is \eqref{4.21} holds.

Since $q\sigma/p_2<\hat q\sigma/p_2=1$ there exists
$\gamma\in(q,\infty)$ such that
\begin{equation}\label{4.32}
\frac{q}{\gamma}+\frac{q\sigma}{p_2}=1.
\end{equation}
Also
\[
\|(I_R ((N_R f_j )^\lambda ))^q \|_{\gamma/q}
=\| I_R ((N_R f_j )^\lambda )\|^{q}_{\gamma}
\]
and by \eqref{4.24}
\[
\|(N_R f_j )^{\sigma q}\|_{\frac{p_2}{\sigma q}}=\|(N_R f_j )^\sigma
\|^{q}_{p_2 /\sigma}\leq C\| f_j \|^{\sigma q}_{p}.
\]
 It therefore follows from \eqref{4.18}, \eqref{4.32}, 
H\"older's inequality, \eqref{4.31}, and \eqref{4.19} that \eqref{4.20} holds.
\end{proof}

We return now to the proof of Theorem \ref{thm4.1}.  By \eqref{4.14} the sequence
\begin{equation}\label{4.33}
 \{f_j \}\text{ is bounded in }L^1(\mathcal{P}_4 (0,0)).
\end{equation}
Starting with this fact and iterating Lemma \ref{lem4.1} a finite number of times ($m$ times is enough if $m>1/C_0$) we see that there exists $R_0 \in(0,1)$ such that the sequence
$$\{f_j \}\text{ is bounded in }L^p (\mathcal{P}_{4R_0}(0,0))$$
for some $p>(n+2)/2$.  Hence by Theorem \ref{thmB.2} the sequence
$\{N_{R_0}f_j \}$ is bounded in $L^\infty (\mathcal{P}_{4R_0}(0,0))$.
Thus \eqref{4.18} implies the sequence
\begin{equation}\label{4.34}
 \{f_j \}\text{ is bounded in }L^\infty (\mathcal{P}_{R_0}(0,0)).
\end{equation}
Since by Lemma \ref{lem2.8},
\begin{align*}
\iint_{\mathcal{P}_{4}(0,0)}&\Phi(-\eta,-\zeta)f_j(\eta,\zeta)\,d\eta\,d\zeta\\
&\le \iint_{\mathcal{P}_{R_0}(0,0)}\Phi(-\eta,-\zeta)f_j(\eta,\zeta)\,d\eta\,d\zeta
+\frac{C(n)}{R_0^{n/2}}\iint_{\mathcal{P}_{4}(0,0)\setminus\mathcal{P}_{R_0}(0,0)}
f_j(\eta,\zeta)\,d\eta\,d\zeta
\end{align*}
we see that \eqref{4.33} and \eqref{4.34} contradict \eqref{4.15}.
This contradiction completes the proof of Theorem \ref{thm4.1}.
\end{proof}

\section{The case $\lambda\geq\frac{n+2}{n}$}\label{sec5}

In this section we prove Theorems \ref{thm1.1}--\ref{thm1.4} when
$\lambda\geq\frac{n+2}{n}$.  For these values of $\lambda$, Remark
2.1 and the following theorem imply Theorem \ref{thm1.1}.

\begin{thm}\label{thm5.1}
 Suppose $u$ is a nonnegative solution of \eqref{2.10},\eqref{2.11} for some constants $\alpha\in(0,n+2)$,
 \begin{equation}\label{5.1}
  \lambda\geq\frac{n+2}{n}\quad\text{and}\quad 0\leq\sigma<1-\frac{\alpha-2}{n+2}\lambda.
 \end{equation}
 Then
 \begin{equation}\label{5.2}
  \max_{|x|\leq1}u(x,t)=o(t^{-\frac{n+2}{2\lambda}})\quad\text{as }t\to0^+ .
 \end{equation}
\end{thm}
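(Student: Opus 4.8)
We follow the scheme of the proof of Theorem \ref{thm4.1}, but carried out at the scaling exponent $\gamma:=\frac{n+2}{2\lambda}$, which equals $\frac n2$ exactly when $\lambda=\frac{n+2}{n}$ (the case in which the two arguments coincide). Set $v=u+1$ and apply Lemma \ref{lem2.3}, so that \eqref{2.12}--\eqref{2.15} hold with a fixed finite measure $\mu$ and a fixed bounded caloric $h$; it suffices to prove $\max_{|x|\le1}v(x,t)=o(t^{-\gamma})$. A first observation is that $\sigma<1-\frac{\alpha-2}{n+2}\lambda$ together with $\lambda\ge\frac{n+2}{n}$ forces $\frac{\alpha}{n+2}+\frac{\sigma}{\lambda}<1$; hence, estimating $\Phi^{\alpha/n}*v^\lambda$ from $v^\lambda\in L^1$ via Theorem \ref{thmB.2} and then using H\"older's inequality in \eqref{2.13}, one gets $Hv\in L^r(B_{\sqrt8}(0)\times(0,8))$ for some $r>1$. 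Combining this with Lemma \ref{lem2.4} applied to the caloric extension $\int\Phi(\,\cdot-y,\,\cdot\,)\,d\mu(y)\le v$, and with a H\"older estimate of the part of the heat potential in \eqref{2.15} arising from outside a shrinking parabolic cylinder (Lemmas \ref{lem2.5}--\ref{lem2.8}), one finds that every term in \eqref{2.15} except the heat potential of $Hv$ over a small cylinder about a given point is $o(t^{-\gamma})$, rather than merely $O(t^{-n/2})$ as in the earlier cases; the exponent identity that makes this work is $\frac{n+2}{2\lambda}(1-\sigma)-\frac{\alpha-2}{2}>0$, which is precisely the hypothesis.

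Suppose the conclusion fails. Then there are $\varepsilon_0>0$ and $(x_j,t_j)\to(x_0,0)$ with $x_0\in\overline{B_1(0)}$ and $v(x_j,t_j)\ge\varepsilon_0 t_j^{-\gamma}$; put $r_j=t_j/8$. By the localization just described, $v(x,t)\le C\big[\iint_{\mathcal{P}_{8r_j}(x_j,t_j)}\Phi(x-y,t-s)Hv(y,s)\,dy\,ds+\varepsilon_j t_j^{-\gamma}\big]$ on $\mathcal{P}_{2r_j}(x_j,t_j)$ with $\varepsilon_j\to0$, whence $\iint_{\mathcal{P}_{8r_j}(x_j,t_j)}\Phi(x_j-y,t_j-s)Hv(y,s)\,dy\,ds\ge c\,t_j^{-\gamma}$ for large $j$. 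I would then set $f_j(\eta,\zeta)=r_j^{1+\gamma}Hv(x_j+\sqrt{r_j}\eta,t_j+r_j\zeta)$ --- the amplitude $r_j^{1+\gamma}$ is the one matched to the exponent $\gamma$, and reduces to the normalization in \eqref{4.12} when $\lambda=\frac{n+2}{n}$ --- and change variables. This gives the analogue of \eqref{4.15}, namely $\iint_{\mathcal{P}_8(0,0)}\Phi(-\eta,-\zeta)f_j\,d\eta\,d\zeta\ge c>0$; and since $\iint_{\mathcal{P}_{8r_j}(x_j,t_j)}Hv\le|\mathcal{P}_{8r_j}|^{1-1/r}\|Hv\|_{L^r(\mathcal{P}_{8r_j})}$ with $r>1$ and $\gamma-\frac n2+\frac{n+2}{2}(1-\frac1r)=\frac{n+2}{2\lambda}(1-\sigma)-\frac{\alpha-2}{2}>0$, it also gives the analogue of \eqref{4.14}, $\iint_{\mathcal{P}_8(0,0)}f_j\,d\eta\,d\zeta\to0$. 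Rescaling \eqref{2.13} and \eqref{2.15} and handling the far part of the convolution as in the derivation of \eqref{4.18} (Lemmas \ref{lem2.5}--\ref{lem2.8}, Theorem \ref{thmB.2}) produces, in the notation of the proof of Lemma \ref{lem4.1}, a pointwise inequality $f_j\le C\big[1+I_R((N_Rf_j)^\lambda)\big]\big[1+(N_Rf_j)^\sigma\big]$ on $\mathcal{P}_R(0,0)$ for $R\in(0,1]$.

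Next I would invoke the Moser iteration appropriate to this range of $\lambda$ (the analogue of Lemma \ref{lem4.1}, i.e.\ Lemma \ref{lem5.2}): starting from the $L^1$-bound on $\{f_j\}$ and the pointwise inequality above, finitely many iterations give $R_0\in(0,1)$ with $\{f_j\}$ bounded in $L^p(\mathcal{P}_{4R_0}(0,0))$ for some $p>\frac{n+2}{2}$, hence $\{N_{R_0}f_j\}$ bounded in $L^\infty(\mathcal{P}_{4R_0}(0,0))$ by Theorem \ref{thmB.2}, hence $\{f_j\}$ bounded in $L^\infty(\mathcal{P}_{R_0}(0,0))$. Splitting $\iint_{\mathcal{P}_8(0,0)}\Phi(-\eta,-\zeta)f_j\,d\eta\,d\zeta=\iint_{\mathcal{P}_{R_0}}\Phi f_j+\iint_{\mathcal{P}_8\setminus\mathcal{P}_{R_0}}\Phi f_j$, the second piece is $\le C R_0^{-n/2}\|f_j\|_{L^1(\mathcal{P}_8)}\to0$ by Lemma \ref{lem2.8}, and the first $\to0$ by dominated convergence (the $f_j$ are uniformly bounded on $\mathcal{P}_{R_0}(0,0)$ and tend to $0$ in $L^1$, hence a.e.\ along a subsequence, and $\Phi\in L^1(\mathcal{P}_{R_0}(0,0))$ by Lemma \ref{lem2.6}). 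This contradicts $\iint_{\mathcal{P}_8(0,0)}\Phi(-\eta,-\zeta)f_j\,d\eta\,d\zeta\ge c>0$, and proves the theorem.

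The main obstacle is the Moser-iteration step: each round improves integrability only because $\sigma<1-\frac{\alpha-2}{n+2}\lambda$ (equivalently $\frac{\lambda+\sigma-1}{\lambda}<\frac{n+4-\alpha}{n+2}$, which is what makes the first gain positive when one starts from $v\in L^\lambda$), so the delicate points are to arrange the exponents in the iteration correctly and to keep all the bounds uniform in $j$. A secondary obstacle, not present when $\lambda<\frac{n+2}{n}$, is the sharp $o(t^{-\gamma})$ (rather than crude $O(t^{-n/2})$) control of the measure term and of the far heat potential in \eqref{2.15}; this is exactly what the preliminary $L^r$-regularity of $Hv$ with $r>1$, together with Lemma \ref{lem2.4}, is used for.
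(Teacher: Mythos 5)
Your overall plan is sound at the level of strategy, but it transports the Section~4 recipe (rescaling $Hv$ and iterating on $f_j$ from an $L^1$ bound) into the regime $\lambda\geq\frac{n+2}{n}$, where that recipe breaks down at the very first step; the paper's Section~5 proof instead rescales $v$ itself, which is what rescues the iteration.

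Concretely: with $f_j=r_j^{1+\gamma}Hv(x_j+\sqrt{r_j}\,\cdot\,,\,t_j+r_j\,\cdot\,)$ and the pointwise bound
$f_j\leq C\bigl[1+I_R\bigl((N_Rf_j)^\lambda\bigr)\bigr]\bigl[1+(N_Rf_j)^\sigma\bigr]$,
a bound on $f_j$ in $L^p$ gives $N_Rf_j\in L^{p_2}$ with $\frac{1}{p_2}=\frac1p-\frac{2-\varepsilon}{n+2}$, and one must then apply $I_R$ to $(N_Rf_j)^\lambda\in L^{p_2/\lambda}$. Starting from $p=1$ one gets $p_2=\frac{n+2}{n+\varepsilon}$ and hence
$\frac{p_2}{\lambda}\leq\frac{n}{n+\varepsilon}<1$ for every $\lambda\geq\frac{n+2}{n}$;
Theorems~\ref{thmB.1} and \ref{thmB.2} only apply for exponents $\geq1$, so the step $I_R\colon L^{p_2/\lambda}\to L^{p_3}$ is unavailable and the iteration does not start. (Even setting this aside, the formal gain at $p=1$ computes to $\frac{2(n+2)-\alpha-n(\lambda+\sigma)}{n+2}$, i.e.\ the Section~4 gain, which is negative whenever $\sigma>\frac{2(n+2)-\alpha}{n}-\lambda$, and the hypothesis here only guarantees $\sigma<1-\frac{\alpha-2}{n+2}\lambda$, a strictly weaker restriction for $\lambda>\frac{n+2}{n}$.) Thus the statement ``starting from the $L^1$-bound on $\{f_j\}$ and the pointwise inequality above, finitely many iterations give \dots'' is not justified, and Lemma~\ref{lem5.2} (which you cite as ``the analogue of Lemma~\ref{lem4.1}'') cannot be the lemma you use: it is a lemma about the rescaled $v$, not the rescaled $Hv$, and its hypothesis requires $p\geq\lambda$, not $p=1$. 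Your closing paragraph even alludes to the issue (``what makes the first gain positive when one starts from $v\in L^\lambda$''), but this is inconsistent with the $L^1$ start on $f_j$ that the body of the argument actually uses.

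This is a fixable gap, but it requires an extra step you have omitted. You already observe (via Lemma~\ref{lem5.1}-type reasoning) that $Hv\in L^{p_3}$ with $\frac1{p_3}=\frac{\alpha+\varepsilon}{n+2}+\frac{\sigma}{\lambda}<1$; rescaling the $L^{p_3}$ norm (rather than the $L^1$ norm) of $Hv$ over $\mathcal{P}_{8r_j}(x_j,t_j)$ gives $\iint f_j^{p_3}\to0$ because $p_3(1+\gamma)>\frac{n+2}{2}$ is exactly the arithmetic consequence of $\sigma<1-\frac{\alpha-2}{n+2}\lambda$, and at $p=p_3$ one does have $\frac{p_2}{\lambda}>1$, so the iteration can begin and its gain is bounded below by $\frac{1-\sigma}{\lambda}-\frac{\alpha-2}{n+2}>0$. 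This is the missing hinge. The paper avoids all of it by setting $v_j(\eta,\zeta)=t_j^{\gamma}v(x_j+\sqrt{t_j}\eta,t_j+t_j\zeta)$ and iterating on $v_j$, because the hypothesis $v\in L^\lambda$ together with \eqref{5.17} hands over the correct starting space $L^\lambda$ for free; your rescaling of $Hv$ forces you to manufacture the starting space from the $L^{p_3}$ estimate, which is the part your write-up skips.
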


\begin{proof}
 Let $v=u+1$.  Then by Lemma \ref{lem2.3} we have that \eqref{2.12}--\eqref{2.15} hold.  To prove \eqref{5.2} it clearly suffices to prove
 \begin{equation}\label{5.3}
  \max_{|x|\leq1}v(x,t)=o(t^{-\frac{n+2}{2\lambda}})\quad\text{as }t\to0^+ .
 \end{equation}
 
 Since increasing $\sigma$ increases the right side of the second inequality in \eqref{2.13}$_1$, we can assume instead of \eqref{5.1} that
 \begin{equation}\label{5.4}
  \lambda\geq\frac{n+2}{n}\quad\text{and}\quad 0<\sigma<1-\frac{\alpha-2}{n+2}\lambda
 \end{equation}
 which implies
 \begin{equation}\label{5.5}
  \frac{\sigma}{\lambda}<\frac{2-\alpha}{n+2}+\frac{1}{\lambda}\leq\frac{2-\alpha}{n+2}+\frac{n}{n+2}=\frac{n+2-\alpha}{n+2}.
 \end{equation}
 By \eqref{5.4} there exists $\varepsilon=\varepsilon(n,\lambda,\sigma,\alpha)\in(0,1)$ such that
 \begin{equation}\label{5.6}
  \alpha+\varepsilon<n+2\quad\text{and}\quad \sigma<1-\frac{\alpha+\varepsilon-2}{n+2}\lambda
 \end{equation}
 which implies
 \begin{equation}\label{5.7}
  \frac{\sigma-1}{\lambda}<\frac{2-\alpha-\varepsilon}{n+2}.
 \end{equation}
 
 Part of the proof of Theorem \ref{thm5.1} will consist of two lemmas, the first of which is the following.

\begin{lem}\label{lem5.1}
  Suppose $\Omega$ is a bounded open subset of $\mathbb{R}^n \times\mathbb{R}$ and
  \begin{equation}\label{5.8}
   p\in\left[\lambda,\frac{(n+2)\lambda}{n+2-\alpha-\varepsilon}\right).
  \end{equation}
  Then for all $w\in L^p (\Omega)$ we have
  \begin{equation}\label{5.9}
   \left\|\left(\iint_\Omega \Phi(\cdot-y,\cdot-s)^{\alpha/n}w(y,s)^\lambda \,dy\,ds\right)w^\sigma \right\|_{L^{p_3}(\Omega)}\leq C\| w\|^{\lambda+\sigma}_{L^p (\Omega)}
  \end{equation}
  where
  \begin{equation}\label{5.10}
   \frac{1}{p_3}=\frac{\lambda+\sigma}{p}-\frac{n+2-\alpha-\varepsilon}{n+2}
  \end{equation}
  and $C=C(n,\lambda,\sigma,\alpha,\Omega,p)$ is a positive constant.  Moreover,
  \begin{equation}\label{5.11}
   p_3 >1.
  \end{equation}
 \end{lem}

 \begin{proof}
  Define $p_2$ by 
  $$\frac{\lambda}{p}-\frac{1}{p_2}=\frac{n+2-\alpha-\varepsilon}{n+2}.$$
  Then by \eqref{5.8} and \eqref{5.6}$_1$, $1\leq p/\lambda <p_2 <\infty$ and thus by Theorem \ref{thmB.2} we have, letting
  $$I(f)=\iint_\Omega \Phi(\cdot-y,\cdot-s)^{\alpha/n}f(y,s)\,dy\,ds,$$
  that 
  \begin{equation}\label{5.12}
   \| I(w^\lambda )\|_{L^{p_2}(\Omega)}\leq C\| w^\lambda \|_{L^{p/\lambda}(\Omega)}=C\| w\|^{\lambda}_{L^p (\Omega)}.
  \end{equation}
  Since $\frac{1}{p_3}=\frac{1}{p_2}+\frac{\sigma}{p}$ we have by
  H\"older's inequality that
  \begin{align*}
   \| I(w^\lambda )w^\sigma &\|^{p_3}_{L^{p_3}(\Omega)}
=\|(I(w^\lambda )w^\sigma )^{p_3}\|_{L^1 (\Omega)}\\
   &\leq\| I(w^\lambda )^{p_3}\|_{L^{p_2 /p_3}(\Omega)}\| w^{\sigma p_3}\|_{L^{\frac{p}{\sigma p_3}}(\Omega)}\\
   &=\| I(w^\lambda )\|^{p_3}_{L^{p_2}(\Omega)}\| w\|^{\sigma p_3}_{L^p (\Omega)}.
  \end{align*}
  Thus \eqref{5.9} follows from \eqref{5.12}.
  
  Also from \eqref{5.8} and \eqref{5.7} we find that
  $$\frac{1}{p_3}\leq\frac{\lambda+\sigma}{\lambda}-\frac{n+2-\alpha-\varepsilon}{n+2}=\frac{\sigma}{\lambda}+\frac{\alpha+\varepsilon}{n+2}<\frac{1}{\lambda}+\frac{2}{n+2}.$$
  Thus \eqref{5.11} follows from \eqref{5.4}$_1$.
 \end{proof}
 
 We now continue with the proof of Theorem \ref{thm5.1}.  Suppose for
 contradiction that \eqref{5.3} is false.  Then there exists a
 sequence $\{(x_j ,t_j )\}\subset\overline{B_1 (0)}\times(0,1/2)$
 and $x_0 \in\overline{B_1 (0)}$ such that
 \begin{equation}\label{5.13}
  (x_j ,t_j )\to (x_0 ,0)\quad\text{as }j\to\infty
 \end{equation}
 and
 \begin{equation}\label{5.14}
  \liminf_{j\to\infty}t^{\frac{n+2}{2\lambda}}_{j}v(x_j ,t_j )>0.
 \end{equation}
 Define $p_3 >0$ by
 $\frac{1}{p_3}=\frac{\alpha+\varepsilon}{n+2}+\frac{\sigma}{\lambda}$.
 Then by \eqref{2.12}, \eqref{2.13} and Lemma \ref{lem5.1} with
 $\Omega=\mathcal{P}_8 (0,8),\,p=\lambda$, and $w=v$ we have $p_3 >1$ and
 $Hv\in L^{p_3}(\mathcal{P}_4 (0,4)$.  Hence defining $p_4$ by
 $\frac{1}{p_3}+\frac{1}{p_4}=1$, using H\"older's inequality, and
 making the change of variables
 $$
  \begin{array}{cc}
   x=x_j +\sqrt{Rt_j}\xi, & t=t_j +Rt_j \tau\\
   y=x_j +\sqrt{Rt_j}\eta, & s=t_j +Rt_j \zeta
  \end{array}
 $$
 we see for $R\in(0,1]$ that
 \begin{align*}
 &\sup_{(x,t)\in\mathcal{P}_{Rt_j /4}(x_j ,t_j )}
\iint_{\mathcal{P}_4 (0,4)\backslash\mathcal{P}_{Rt_j /2}(x_j ,t_j )}\Phi(x-y,t-s)Hv(y,s)\,ds\\
  &\leq\sup_{(x,t)\in\mathcal{P}_{Rt_j /4}(x_j ,t_j )}\left(\iint_{\mathcal{P}_4 \backslash\mathcal{P}_{Rt_j /2}(x_j ,t_j )}\Phi(x-y,t-s)^{p_4}\,dy\,ds\right)^{1/p_4}\| Hv\|_{L^{p_3}(\mathcal{P}_4 (0,4))}\\
  &\leq C\sup_{(\xi,\tau)\in\mathcal{P}_{1/4}(0,0)}\left(\iint_{\mathbb{R}^n \times\mathbb{R}\backslash\mathcal{P}_{1/2}(0,0)}\left(\frac{1}{(Rt_j )^{n/2}}\right)^{p_4}\Phi(\xi-\eta,\tau-\zeta)^{p_4}(Rt_j )^{\frac{n+2}{2}}\,d\eta\,d\zeta\right)^{1/p_4}\\
  &=C\left(\frac{1}{Rt_j}\right)^{(\frac{np_4}{2}-\frac{n+2}{2})\frac{1}{p_4}}\sup_{(\xi,\tau)\in\mathcal{P}_{1/4}(0,0)}\left(\iint_{\mathbb{R}^n \times\mathbb{R}\backslash\mathcal{P}_{1/2}(0,0)}\Phi(\xi-\eta,\tau-\zeta)^{p_4}\,d\eta \,d\zeta\right)^{1/p_4}\\
  &=C\left(\frac{1}{Rt_j}\right)^{\frac{n+2}{2\lambda}(\sigma-\frac{2-\alpha-\varepsilon}{n+2}\lambda)}
 \end{align*}
 where $C$ depends on neither $R$ nor $j$ and
 $$\sigma-\frac{2-\alpha-\varepsilon}{n+2}\lambda<1$$ 
 by \eqref{5.6}$_2$.
 
 Also, using \eqref{2.14}, Lemma \ref{lem2.8}, and the fact that $\mathcal{P}_{t_j /4}(x_j ,t_j )\subset\mathcal{P}_2 (0,2)$ we see for $R\in(0,1]$ that 
 \begin{align*}
  \sup_{(x,t)\in\mathcal{P}_{Rt_j /4}(x_j ,t_j )}&\iint_{\mathcal{P}_8 (0,8)\backslash\mathcal{P}_4 (0,4)}\Phi(x-y,t-s)Hv(y,s)\,dy\,ds\\
  &\leq C(n)\iint_{\mathcal{P}_8 (0,8)}Hv(y,s)\,dy\,ds<\infty.
 \end{align*}
 Thus by \eqref{2.12}, \eqref{2.15} and Lemma \ref{lem2.4} with $p=\lambda,\,\Omega\times(0,T)=B_2 (0)\times(0,4)$, and $K=\overline{B_{3/2}(0)}$ we have for $(x,t)\in\mathcal{P}_{Rt_j /4}(x_j ,t_j )$ and $R\in(0,1]$ that
 \begin{equation}\label{5.15}
  v(x,t)\leq\iint_{\mathcal{P}_{Rt_j /2}(x_j ,t_j )}\Phi(x-y,t-s)Hv(y,s)\,dy\,ds+\frac{\varepsilon_j}{(Rt_j )^{\frac{n+2}{2\lambda}}}
 \end{equation}
 for some sequence $\{\varepsilon_j \}\subset(0,1)$ which tends to zero as $j\to\infty$ and which depends in neither $(x,t)$ nor $R$.
 
 Also, for $(x,t)\in\mathcal{P}_{Rt_j /4}(x_j ,t_j )$ and $R\in(0,1]$ we have by \eqref{2.12} and Lemma \ref{lem2.8} that
 \begin{align*}
  \iint_{\mathcal{P}_8 (0,8)\backslash\mathcal{P}_{Rt_j /2}(x_j ,t_j )}\Phi(x-y,t-s)^{\alpha/n}v(y,s)^\lambda \,dy\,ds&\leq\left(C(n)\left(\frac{4}{Rt_j}\right)^{n/2}\right)^{\alpha/n}\| v^\lambda \|_{L^1 (\mathcal{P}_8 (0,8))}\\
  &=\frac{C}{(Rt_j )^{\alpha/2}}
 \end{align*}
 where $C$ depends on neither $(x,t),\,R$, nor $j$.  Thus for $(x,t)\in\mathcal{P}_{Rt_j /4}(x_j ,t_j )$ and $R\in(0,1]$ we have by \eqref{2.13} that
 \begin{equation}\label{5.16}
  0\leq Hv(x,t)\leq C\left(\frac{1}{(Rt_j )^{\alpha/2}}+\iint_{\mathcal{P}_{Rt_j /2}(x_j ,t_j )}\Phi(x-y,t-s)^{\alpha/n}v(y,s)^\lambda \,dy\,ds\right)v(x,t)^\sigma 
 \end{equation}
 where $C$ depends on neither $(x,t),\,R$, nor $j$.
 
 Next, making the change of variables
 \[
 v(y,s)=t^{-\frac{n+2}{2\lambda}}_{j}v_j (\eta,\zeta),
\]
\[
 x=x_j +\sqrt{t_j}\xi, \quad t=t_j +t_j \tau;\qquad
 y=x_j +\sqrt{t_j}\eta, \quad s=t_j +t_j \zeta,
\]
we obtain
 \begin{equation}\label{5.17}
  \iint_{\mathcal{P}_{1/2}(0,0)}v_j (\eta,\zeta)^\lambda \,d\eta\,d\zeta=\iint_{\mathcal{P}_{t_j /2}(x_j ,t_j )}v(y,s)^\lambda \,dy\,ds
 \end{equation}
 and from \eqref{5.15} and \eqref{5.16} we find for $(\xi,\tau)\in\mathcal{P}_{R/4}(0,0)$ and $R\in(0,1]$ that
 \begin{align}\label{5.18}
  \notag v_j(\xi,\tau)&\leq\iint_{\mathcal{P}_{{R/2}}(0,0)}\frac{1}{t^{n/2}_{j}}\Phi(\xi-\eta,\tau-\zeta)t^{-1}_{j}Hv_j (\eta,\zeta)t^{\frac{n+2}{2}}_{j}\,d\eta\,d\zeta +\frac{\varepsilon_j}{R^{\frac{n+2}{2\lambda}}}\\
  &=\iint_{\mathcal{P}_{R/2}(0,0)}\Phi(\xi-\eta,\tau-\zeta)Hv_j (\eta,\zeta)\,d\eta\,d\zeta+\frac{\varepsilon_j}{R^{\frac{n+2}{2\lambda}}},
 \end{align}
 where $\varepsilon_j \to0$ as $j\to\infty$ and $\varepsilon_j$ depends on neither $(\xi,\tau)$ nor $R$, and
 \begin{align}\label{5.19}
  \notag 0\leq Hv_j (\xi,\tau)&\leq Ct^{\frac{n+2}{2\lambda}+1-\frac{\alpha}{2}}_{j}\left(\frac{1}{R^{\alpha/2}}+\iint_{\mathcal{P}_{R/2}(0,0)}\Phi(\xi-\eta,\tau-\zeta)^{\alpha/n}v_j (\eta,\zeta)^\lambda \,d\eta\,d\zeta\right)\\
  \notag &\quad\times(t^{-\frac{(n+2)\sigma}{2\lambda}}_{j}v_j (\xi,\eta)^\sigma )\\
  &=C\hat{\varepsilon}_j\left(\frac{1}{R^{\alpha/2}}+\iint_{\mathcal{P}_{R/2}(0,0)}\Phi(\xi-\eta,\tau-\zeta)^{\alpha/n}v_j (\eta,\zeta)^\lambda \,d\eta\,d\zeta\right)v_j (\xi,\eta)^\sigma
 \end{align}
 where $C$ depends on neither $(\xi,\tau),\,R$, nor $j$ and 
 $$\hat{\varepsilon}_j :=t^{-\frac{n+2}{2\lambda}(\sigma-1+\frac{\alpha-2}{n+2}\lambda)}_{j}\to0\quad\text{as }j\to\infty$$
 by \eqref{5.1} and \eqref{5.13}.
 
 Also by \eqref{5.14} we have 
 \begin{equation}\label{5.20}
  \liminf_{j\to\infty}v_j (0,0)>0.
 \end{equation}
 
 To complete the proof of Theorem \ref{thm5.1} we will require the following lemma.
 \begin{lem}\label{lem5.2}
  Suppose the sequence
  \begin{equation}\label{5.21}
   \{v_j \}\text{ is bounded in }L^p (P_{R/2}(0,0))
  \end{equation}
  for some constants $R\in(0,1]$ and
  \begin{equation}\label{5.22}
   p\in\left[\lambda,\frac{(n+2)\lambda}{n+2-\alpha-\varepsilon}\right).
  \end{equation}
  Then either the sequence
  \begin{equation}\label{5.23}
   \{v_j \}\text{ tends to zero in }L^{\frac{(n+2)\lambda}{n+2-\alpha-\varepsilon}}(P_{R/8}(0,0))
  \end{equation}
  or there exists a positive constant $C_0 =C_0 (n,\lambda,\sigma,\alpha)$ such that the sequence
  \begin{equation}\label{5.24}
   \{v_j \}\text{ tends to zero in }L^q (P_{R/8}(0,0))
  \end{equation}
  for some $q\in(p,\infty)$ satisfying
  \begin{equation}\label{5.25}
   \frac{1}{p}-\frac{1}{q}\geq C_0 .
  \end{equation}
 \end{lem}
 
 \begin{proof}
   It follows from \eqref{5.19}, \eqref{5.21}, \eqref{5.22}, and Lemma
   \ref{lem5.1} that the sequence
  \begin{equation}\label{5.26}
   \{Hv_j \}\text{ tends to $0$ in }L^{p_3}(P_{R/4}(0,0))
  \end{equation}
  where $p_3$, defined by \eqref{5.10}, satisfies \eqref{5.11}.\\
\medskip  

\noindent\textbf{Case I.} Suppose $p_3 \geq\frac{n+2}{2}$.  Then by
  \eqref{5.26}, \eqref{5.18}, and Theorem \ref{thmB.2} we have the
  sequence
  \[
   \{v_j \}\text{ tends to zero in }L^q (P_{R/8}(0,0))\quad\text{for all }q>1
  \]
  which implies \eqref{5.23}.
 \medskip 
 
\noindent \textbf{Case II.}  Suppose $p_3 <\frac{n+2}{2}$.  Define $q$ by
  \begin{equation}\label{5.27}
   \frac{1}{p_3}-\frac{1}{q}=\frac{2}{n+2}.
  \end{equation}
  Then by \eqref{5.11}
  $$1<p_3 <q<\infty.$$
  Hence by \eqref{5.26}, \eqref{5.18} and Theorem \ref{B.1} we have
  \eqref{5.24} holds.
  
  Also by \eqref{5.27}, \eqref{5.10},  \eqref{5.22}, and \eqref{5.7} we get
  \begin{align*}
   \frac{1}{p}-\frac{1}{q}
&=\frac{1}{p}+\frac{2}{n+2}-\frac{1}{p_3}
=\frac{1}{p}+\frac{2}{n+2}-\frac{\sigma}{p}-\frac{\lambda}{p}+1-\frac{\alpha+\varepsilon}{n+2}\\
   &=-\frac{\lambda+\sigma-1}{p}+1-\frac{\alpha+\varepsilon-2}{n+2}\\
   &\geq\frac{1-(\lambda+\sigma)}{\lambda}+1+\frac{2-\alpha-\varepsilon}{n+2}>0.
  \end{align*}
  Thus \eqref{5.25} holds.
 \end{proof}

We now return to the proof of Theorem \ref{thm5.1}.  By \eqref{2.12} and
 \eqref{5.17}, the sequence $\{v_j \}$ tends to zero in
 $L^\lambda (P_{1/2}(0,0))$.  Starting with this fact on iterating
 Lemma \ref{lem5.2} a finite number of times we see that the sequence
 \begin{equation}\label{5.28}
  \{v_j \}\text{ tends to zero in }L^p (\mathcal{P}_{R/2}(0,0))
 \end{equation}
 for some $R\in(0,1)$ and for some
 \begin{equation}\label{5.29}
  p>\frac{(n+2)\lambda}{n+2-\alpha}.
 \end{equation}
 Hence the sequence $\{v^{\lambda}_{j}\}$ tends to zero in $L^{p/\lambda}(\mathcal{P}_{R/2}(0,0))$ and $\frac{p}{\lambda}>\frac{n+2}{n+2-\alpha}$.  Thus by Theorem \ref{thmB.2}, the sequence whose $j$th term is the integral on the right side of \eqref{5.19}, tends to zero in
 $L^\infty (\mathcal{P}_{R/2}(0,0))$.  So by \eqref{5.19}
 \begin{equation}\label{5.30}
  0\leq Hv_j <Cv^{\sigma}_{j}\quad\text{in }\mathcal{P}_{R/4}(0,0)
 \end{equation}
 where $C$ does not depend on $j$.  Hence by \eqref{5.28} the sequence $\{Hv_j \}$ tends to zero in $L^{p/\sigma}(\mathcal{P}_{R/4}(0,0))$ and by \eqref{5.29} and \eqref{5.5}
 $$\frac{p}{\sigma}>\frac{(n+2)\lambda}{(n+2-\alpha)\sigma}>\left(\frac{n+2}{n+2-\alpha}\right)^2 >1.$$
 Thus by \eqref{5.18} and Theorem \ref{thmB.2} the sequence 
 \begin{equation}\label{5.31}
  \{v_j \}\text{ tends to zero in }L^q (\mathcal{P}_{R/8}(0,0))\text{ where }q=
  \begin{cases}
   \infty, & \text{if }\frac{p}{\sigma}\geq\frac{n+2}{2-\varepsilon}\\
   \frac{1}{\frac{\sigma}{p}-\frac{2-\varepsilon}{n+2}}, & \text{if }\frac{p}{\sigma}<\frac{n+2}{2-\varepsilon}.
  \end{cases}
 \end{equation}
 However the possibility that $q=\infty$ is ruled out by \eqref{5.20}.  Hence we can assume $\frac{p}{\sigma}<\frac{n+2}{2-\varepsilon}$.  Then by \eqref{5.31},
  $$\frac{1}{p}-\frac{1}{q}=\frac{1-\sigma}{p}+\frac{2-\varepsilon}{n+2}.$$
  Thus, if $\sigma\in(0,1]$ then 
  $$\frac{1}{p}-\frac{1}{q}>\frac{1}{n+2}.$$
  On the other hand, if $\sigma>1$ then by \eqref{5.29} and \eqref{5.7}
  \begin{align*}
   \frac{1}{p}-\frac{1}{q}&=\frac{2-\varepsilon}{n+2}-\frac{\sigma-1}{p}\\
   &>\frac{2-\varepsilon}{n+2}-\frac{\sigma-1}{\lambda}\frac{n+2-\alpha}{n+2}\\
   &>\frac{2-\varepsilon}{n+2}-\frac{2-\alpha-\varepsilon}{n+2}
   =\frac{\alpha}{n+2}.
  \end{align*}
  Thus for $\sigma>0$ we have
  $$\frac{1}{p}-\frac{1}{q}>C(n,\alpha)>0.$$
  Hence, after a finite number of iterations of the procedure of going
  from \eqref{5.28} to \eqref{5.31} we see that the sequence $\{v_j\}$
  tends to zero in $L^\infty (\mathcal{P}_{\hat R}(0,0))$ for some
  $\hat R\in(0,R)$ which again contrdicts \eqref{5.20}. This completes
  the proof of Theorem \ref{thm5.1}.
\end{proof}

The following theorem implies Theorem \ref{thm1.2}.
\begin{thm}\label{thm5.2}
 Suppose
 \begin{equation}\label{5.32}
  \lambda\geq\frac{n+2}{n}\quad\text{and}\quad \gamma=\frac{n+2-\varepsilon}{2\lambda}
 \end{equation}
 for some $\varepsilon\in(0,1)$.  Then the function
 \begin{equation}\label{5.33}
  u(x,t)=\Psi(x,t):=\int_{\mathbb{R}^n}\Phi(x-y,t)|y|^{-2\gamma}dy
 \end{equation}
 is a $C^\infty$ positive solution of
 \begin{equation}\label{5.34}
  Hu=0\quad\text{in }\mathbb{R}^n \times(0,\infty)
 \end{equation}
 such that
 \begin{equation}\label{5.35}
  u\in L^\lambda (\mathbb{R}^n \times(0,T))\quad\text{for all }T>0,
 \end{equation}
 \begin{equation}\label{5.36}
  t^\gamma u(0,t)=u(0,1)\quad\text{for }0<t<\infty,
 \end{equation}
 and
 \begin{equation}\label{5.37}
  t^\gamma u(x,t)\text{ is bounded between positive constants}
 \end{equation}
 on $\{(x,t)\in\mathbb{R}^n \times(0,t):|x|<\sqrt{t}\}$.
\end{thm}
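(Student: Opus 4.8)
The plan is to exploit the parabolic scaling shared by the heat kernel and by the homogeneous weight $|y|^{-2\gamma}$; once the scaling identity is in hand, \eqref{5.34}--\eqref{5.37} follow with little extra work. \emph{Step 1 (well-posedness, smoothness, caloricity).} I would first record that, since $\lambda\ge\frac{n+2}{n}$ and $\varepsilon\in(0,1)$,
\[
0<2\gamma=\frac{n+2-\varepsilon}{\lambda}\le\frac{n(n+2-\varepsilon)}{n+2}<n ,
\]
so $|y|^{-2\gamma}\in L^1_{loc}(\R^n)$. Together with the Gaussian decay of $\Phi(x-y,t)$ in $y$ for each fixed $t>0$, this makes the integral in \eqref{5.33} absolutely convergent, and clearly $\Psi>0$. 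Differentiating under the integral sign --- legitimate on every set $K\times[a,b]$ with $K\subset\R^n$ compact and $0<a\le b$, since there all $(x,t)$-derivatives of $\Phi(x-y,t)$ are dominated by a fixed $|y|^{-2\gamma}$-weighted integrable function of $y$ --- gives $\Psi\in C^\infty(\R^n\times(0,\infty))$, and since $H\Phi(x-y,t)=0$ for $t>0$ we obtain \eqref{5.34}.

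\emph{Step 2 (scaling identity).} Substituting $y=\sqrt{t}\,\eta$ in \eqref{5.33} and using $\Phi(x-\sqrt{t}\,\eta,t)=t^{-n/2}\Phi\bigl(\tfrac{x}{\sqrt t}-\eta,1\bigr)$ yields
\[
t^{\gamma}\Psi(x,t)=\Psi\!\left(\frac{x}{\sqrt t},1\right)\qquad\text{for }(x,t)\in\R^n\times(0,\infty).
\]
Putting $x=0$ gives \eqref{5.36}. By Step 1, $\Psi(\cdot,1)$ is continuous and positive on the compact ball $\overline{B_1(0)}$, hence bounded there between two positive constants; since $|x|<\sqrt t$ forces $|x/\sqrt t|<1$, the identity above gives \eqref{5.37}.

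\emph{Step 3 ($L^\lambda$ integrability).} I would prove the pointwise decay bound $\Psi(z,1)\le C(1+|z|)^{-2\gamma}$ for $z\in\R^n$: for $|z|\ge 1$ split the integral at $|y|=|z|/2$, noting that on $\{|y|<|z|/2\}$ one has $|z-y|\ge|z|/2$, so $\Phi(z-y,1)\le Ce^{-|z|^2/16}$ against $\int_{|y|<|z|/2}|y|^{-2\gamma}\,dy=C|z|^{n-2\gamma}$, while on $\{|y|\ge|z|/2\}$ one has $|y|^{-2\gamma}\le C|z|^{-2\gamma}$ and $\int_{\R^n}\Phi(z-y,1)\,dy=1$; for $|z|<1$ use continuity of $\Psi(\cdot,1)$. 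Since $2\gamma\lambda=n+2-\varepsilon>n$, this bound gives $\Psi(\cdot,1)\in L^\lambda(\R^n)$. Finally, by the scaling identity of Step 2 and the substitution $x=\sqrt t\,z$,
\[
\iint_{\R^n\times(0,T)}\Psi(x,t)^\lambda\,dx\,dt=\left(\int_{\R^n}\Psi(z,1)^\lambda\,dz\right)\int_0^T t^{\,n/2-\gamma\lambda}\,dt ,
\]
and $n/2-\gamma\lambda=(\varepsilon-2)/2>-1$, so the time integral is finite; this proves \eqref{5.35}.

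The only step requiring actual estimation is the decay bound for $\Psi(\cdot,1)$ in Step 3; the rest is bookkeeping with the parabolic scaling, and I do not anticipate any genuine obstacle.
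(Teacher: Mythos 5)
Your proposal is correct and rests on the same central device as the paper, namely the parabolic self-similarity $t^\gamma\Psi(x,t)=\Psi(x/\sqrt{t},1)$ (the paper writes this as $u(ax,a^2t)=a^{-2\gamma}u(x,t)$), from which \eqref{5.36} and \eqref{5.37} follow at once. The one place where you diverge is in proving \eqref{5.35}: the paper introduces the radial profile $g(\zeta)=u(e_1,\zeta)$ via $u(x,t)=|x|^{-2\gamma}g(t/|x|^2)$ and extracts the asymptotics $g(\zeta)\to 1$ as $\zeta\to0^+$ and $g(\zeta)\sim u(0,1)\zeta^{-\gamma}$ as $\zeta\to\infty$, then bounds $\int_{\R^n}u(\cdot,t)^\lambda$ by splitting at $|x|=\sqrt t$; you instead reduce everything to the time-one slice, prove the pointwise decay $\Psi(z,1)\le C(1+|z|)^{-2\gamma}$ by splitting the defining convolution at $|y|=|z|/2$, and then factor the space-time integral as $\bigl(\int\Psi(\cdot,1)^\lambda\bigr)\int_0^T t^{n/2-\gamma\lambda}\,dt$. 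These are equivalent bookkeeping choices — both arrive at the same decay exponent $n/2-\gamma\lambda=(\varepsilon-2)/2>-1$ — but your version makes the role of $\Psi(\cdot,1)\in L^\lambda(\R^n)$ (i.e., $2\gamma\lambda>n$) slightly more transparent.
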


\begin{proof}
 By \eqref{5.32} we have $2\gamma<n$.  Thus \eqref{5.33} is a $C^\infty$ positive solution of \eqref{5.34}.
 
 For $a>0$ and $(x,t)\in\mathbb{R}^n \times(0,\infty)$ we find making the change of variables $y=az$ that
 \begin{align}\label{5.38}
  \notag u(ax,a^2 t)&=\int_{\mathbb{R}^n}\Phi(ax-y,a^2 t)|y|^{-2\gamma}dy\\
  \notag &=\int_{\mathbb{R}^n}\Phi(ax-az,a^2 t)a^{-2\gamma}|z|^{-2\gamma}a^n dz\\
  \notag &=a^{-2\gamma}\int_{\mathbb{R}^n}\Phi(x-z,t)|z|^{-2\gamma}dz\\
  &=a^{-2\gamma}u(x,t).
 \end{align}
 Taking $x=0$ and $t=1$ in \eqref{5.38} we get
 \begin{equation}\label{5.39}
  u(0,a^2 )=a^{-2\gamma}u(0,1)\quad\text{for all }a>0.
 \end{equation}
 Thus \eqref{5.36} holds.
 
 Taking $x\neq0,\,t>0$, and $a=1/|x|$ in \eqref{5.38} and using the fact that $u(x,t)$ is radially symmetric in $x$ about the origin we get
 \begin{equation}\label{5.40}
  u(x,t)=a^{2\gamma}u(ax,a^2 t)=|x|^{-2\gamma}u(e_1 ,\frac{t}{|x|^2})
=|x|^{-2\gamma}g\left(\frac{t}{|x|^2}\right)
 \end{equation}
 where $g(\zeta)=u(e_1 ,\zeta)$ and $e_1 =(1,0,...,0)\in\mathbb{R}^n$.  By \eqref{5.33},
 \begin{equation}\label{5.41}
  g(\zeta)\to1\quad\text{as }\zeta\to0^+
 \end{equation}
 and using \eqref{5.40} and \eqref{5.36} we obtain for $t>0$ that
 $$1=\lim_{x\to 0}\frac{u(x,t)}{u(0,t)}
=\lim_{x\to 0}\frac{|x|^{-2\gamma}g\left(\frac{t}{|x|^2}\right)}{u(0,1)t^{-\gamma}}
=\lim_{x\to0}\frac{1}{u(0,1)}\frac{g\left(\frac{t}{|x|^2}\right)}
{\left(\frac{t}{|x|^2}\right)^{-\gamma}}.$$  
 Thus
 \begin{equation}\label{5.42}
  \frac{g(\zeta)}{\zeta^{-\gamma}}\to u(0,1)\quad\text{as }\zeta\to\infty. 
 \end{equation}
 For $t>0$, it follows from \eqref{5.40}--\eqref{5.42} and \eqref{5.32} that
 \begin{align*}
  \int_{\mathbb{R}^n}u(x,t)^\lambda dx&=\int_{\mathbb{R}^n}|x|^{-2\lambda\gamma}g\left(\frac{t}{|x|^2}\right)^\lambda dx\\
  &\leq
    C\left[\int_{\sqrt{t}<|x|}|x|^{-2\lambda\gamma}dx
+\int_{|x|<\sqrt{t}}|x|^{-2\lambda\gamma}\left(\frac{t}{|x|^2}\right)^{-\gamma\lambda}dx\right]\\
  &\leq Ct^{-1+\varepsilon/2}
 \end{align*}
 which implies \eqref{5.35}.
 
 Making the change of variables
 $$x=\sqrt{t}\xi\quad\text{and}\quad y=\sqrt{t}\eta$$
 in \eqref{5.33} we get
 $$u(x,t)=\frac{1}{t^\gamma}U\left(\frac{x}{\sqrt{t}}\right)\quad\text{for }(x,t)\in\mathbb{R}^n \times(0,\infty)$$
 where
 $$U(\xi)=\frac{1}{(4\pi)^{n/2}}\int_{\mathbb{R}^n}e^{-|\xi-\eta|^2 /4}|\eta|^{-2\gamma}d\eta.$$
 Thus since $U(\xi)$ is bounded between positive constants for $|\xi|\leq1$, we find that \eqref{5.37} holds.
\end{proof}

The following theorem implies Theorems \ref{thm1.3} and \ref{thm1.4}
when $\lambda\geq(n+2)/n$.
\begin{thm}\label{thm5.3}
 Suppose $\alpha,\,\lambda$, and $\sigma$ are constants satisfying
 \begin{equation}\label{Q1}
  \alpha\in(0,n+2),\quad\lambda\geq\frac{n+2}{n},\quad\sigma\geq0,\quad\text{and}\quad \sigma>1+\frac{2-\alpha}{n+2}\lambda.
 \end{equation}
 Let $\varphi:(0,1)\to(0,\infty)$ be a continuous function satisfying
 $$\lim_{t\to0^+}\varphi(t)=\infty.$$
 Then there exists a positive function
 \begin{equation}\label{Q2}
  u\in C^\infty (\mathbb{R}^n \times(0,1))\cap L^\lambda (\mathbb{R}^n \times(0,1))
 \end{equation}
 satisfying
 \begin{equation}\label{Q3}
  0\leq Hu\leq(\Phi^{\alpha/n}*u^\lambda )u^\sigma\quad\text{in }\mathbb{R}^n \times(0,1),
 \end{equation}
 where $*$ is the convolution operation in $\mathbb{R}^n \times(0,1)$, such that
 \begin{equation}\label{Q4}
  u(0,t)\neq O(\varphi(t))\quad\text{as }t\to0^+ .
 \end{equation}
\end{thm}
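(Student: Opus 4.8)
The plan is to mimic the peaked-solution construction in the proof of Theorem~\ref{thm3.2}, the only structural change being the choice of the ``base'' temperature about which the peaks are glued: since $\lambda\ge(n+2)/n$ forces the heat kernel $\Phi\notin L^\lambda(\R^n\times(0,1))$, one must replace it by the self-similar profile $\Psi$ of Theorem~\ref{thm5.2}, which lies in $L^\lambda$, satisfies $H\Psi=0$, and obeys the two-sided bound \eqref{5.37}. As in Theorem~\ref{thm3.2}, since here $\lambda+\sigma>1$, replacing $u$ by a sufficiently large multiple of itself reduces the problem to constructing $u$ with
\[
0\le Hu\le C(\Phi^{\alpha/n}*u^\lambda)u^\sigma\quad\text{in }\R^n\times(0,1)
\]
for some fixed $C=C(n,\lambda,\sigma,\alpha)$.

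Next I would fix $\varepsilon\in(0,1)$ small (with $\varepsilon<\alpha$, say), put $\gamma=\frac{n+2-\varepsilon}{2\lambda}$ (the exponent appearing in Theorem~\ref{thm5.2}), and choose the peak exponent $p$ in the interval $\bigl(\frac{n+4-\alpha}{2(\lambda+\sigma-1)},\,\gamma\bigr)$, as close to $\gamma$ as needed; the hypothesis $\sigma>1+\frac{2-\alpha}{n+2}\lambda$ is exactly what forces this interval to be nonempty once $\varepsilon$ is small. With a sequence $T_j\to0^+$, set $w_j(t)=(T_j-t)^{-p}$, define $t_j\in(0,T_j)$ by $w_j(t_j)=\Psi(0,t_j)$ (so $w_j(t_j)\approx t_j^{-\gamma}$ and $(T_j-t_j)/t_j\to0$, using $p<\gamma$), and pick $a_j\in(\tfrac{T_j+t_j}{2},T_j)$ with $w_j(a_j)>j\varphi(a_j)$. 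Copying verbatim the construction of the parabolic regions $\omega_j\subset\Omega_j$, the cutoffs $\chi_j$, the sources $f_j=\chi_j w_j'$, the potentials $u_j=\Phi*f_j$ (so $Hu_j=f_j$), and finally
\[
u=(M+1)\Psi+\sum_{j}u_j,
\]
Lemma~\ref{lem2.10} (whose exponent parameter, taken equal to $1$, gives $\int_{|y|<\sqrt{T-s}}\Phi(x-y,t-s)\,dy\ge\beta$) yields the lower bound $u\ge\Psi+\beta w_j$ on $\Omega_j$ for a dimensional constant $\beta\in(0,1)$, once $M$ is large enough that $M\Psi\ge2w_j(t_j)$ on $\Omega_j$ --- which holds since \eqref{5.37} gives $\Psi\approx t_j^{-\gamma}$ there. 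That $u\in L^\lambda(\R^n\times(0,1))$ follows from $\Psi\in L^\lambda$ together with $\sum_j\|u_j\|_{L^\lambda}<\infty$; since $\lambda\ge(n+2)/n>1$ one gets $\|u_j\|_{L^\lambda}\to0$ by applying Theorem~\ref{thmB.2} to $u_j=\Phi*f_j$ with an exponent $r\ge1$ satisfying $\frac1r-\frac1\lambda<\frac2{n+2}$ and $(p+1)r<\frac{n+2}{2}$ (such $r$ exists because $p<\gamma<\frac{n+2}{2\lambda}$), combined with $\|f_j\|_{L^r}^r\lesssim\int_0^{T_j}(T_j-s)^{\frac n2-(p+1)r}\,ds\to0$. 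Smoothness of $u$ on $\R^n\times(0,1)$ is obtained as in the proof of Theorem~\ref{thm3.2}.

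The main obstacle is verifying the differential inequality on each $\Omega_j$ (off $\bigcup_j\Omega_j$ one has $Hu\equiv0$). There $Hu=f_j\le p\,w_j(t)^{1+1/p}$ and $u\ge\beta w_j(t)$, so it suffices to prove $w_j(t)^{1+1/p-\sigma}\lesssim(\Phi^{\alpha/n}*u^\lambda)(x,t)$ on $\Omega_j$, which I would split into the same two cases as in Theorem~\ref{thm3.2}. In Case~I, where $t\le\tfrac{T_j+t_j}{2}$ and hence $w_j(t)\approx t_j^{-\gamma}$, I would retain only $u\ge\Psi$, bound $\Psi^\lambda$ from below on $\{|y|<\sqrt s\}$ by \eqref{5.37} and then use Lemma~\ref{lem2.2} (integrating over $s$ close to $t$) to obtain $(\Phi^{\alpha/n}*\Psi^\lambda)(x,t)\gtrsim t^{(n+2-\alpha)/2-\gamma\lambda}=t^{(\varepsilon-\alpha)/2}$, valid because $|x|/\sqrt t\to0$ on $\Omega_j$; the required inequality then reduces to $\gamma(\sigma-1-\tfrac1p)\ge\tfrac{\varepsilon-\alpha}{2}$, which is the delicate point and holds precisely because $p$ is chosen close to $\gamma$ and $\varepsilon$ small, so that $\tfrac1p+\tfrac{\varepsilon-\alpha}{2\gamma}\to\tfrac{(2-\alpha)\lambda}{n+2}<\sigma-1$. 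In Case~II, where $t\ge\tfrac{T_j+t_j}{2}$ and $w_j(t)=(T_j-t)^{-p}$ dominates, I would instead use $u\ge\beta w_j$ and the bound
\[
(\Phi^{\alpha/n}*u^\lambda)(x,t)\ \ge\ \beta^\lambda\iint_{\Omega_j}\Phi(x-y,t-s)^{\alpha/n}w_j(s)^\lambda\,dy\,ds\ \gtrsim\ (T_j-t)^{(n+2-\alpha)/2-p\lambda},
\]
obtained from Lemma~\ref{lem2.10} (with $T=a_j+\varepsilon_j$) after the substitution $T_j-s=(T_j-t)\rho$, as in \eqref{3.42}; here the required inequality becomes $p(\lambda+\sigma-1)\ge\frac{n+4-\alpha}{2}$, which holds by the choice $p>\frac{n+4-\alpha}{2(\lambda+\sigma-1)}$. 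Finally, evaluating at $(x,t)=(0,a_j)$ gives $u(0,a_j)\ge\beta w_j(a_j)>\beta j\,\varphi(a_j)$ (the term $-2w_j(t_j)$ being absorbed into $(M+1)\Psi(0,a_j)$), hence $u(0,t)\ne O(\varphi(t))$ as $t\to0^+$, which completes the proof.
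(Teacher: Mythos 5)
Your proposal is correct and follows essentially the same strategy as the paper's proof: glue Theorem~\ref{thm3.2}'s peak construction onto the self-similar profile $\Psi$ of Theorem~\ref{thm5.2} instead of the heat kernel, and verify the differential inequality on each $\Omega_j$ by splitting into the same two cases. The only differences are cosmetic: you write $w_j=(T_j-t)^{-p}$ where the paper writes $(T_j-t)^{-1/p}$ (so your $p$ is the paper's $1/p$); you specify $p$ by an open interval condition rather than by the paper's explicit formula $\frac{n+2-2\varepsilon}{2\lambda}$; and your Case~I lower bound uses \eqref{5.37} and Lemma~\ref{lem2.10} directly where the paper runs a scaling argument through the auxiliary function $G$ in~\eqref{Q37}--\eqref{Q38} — both give the same power $t^{(\varepsilon-\alpha)/2}$.
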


\begin{proof}
 By scaling $u$ and noting by \eqref{Q1} that $\sigma+\lambda\neq1$ we see that it suffices to prove Theorem \ref{thm5.3} with \eqref{Q3} replaced with the weaker statement that there exists a positive constant $C=C(n,\lambda,\sigma,\alpha)$ such that $u$ satisfies
 \begin{equation}\label{Q5}
  0\leq Hu\leq C(\Phi^{\alpha/n}*u^\lambda )u^\sigma \quad\text{in }\mathbb{R}^n \times(0,1),
 \end{equation}
 where $(*)$ is the convolution operation in $\mathbb{R}^n \times(0,1)$.
 
 By \eqref{Q1} there exists $\varepsilon=\varepsilon(n,\lambda,\sigma,\alpha)\in(0,1)$ such that
 \begin{equation}\label{Q6}
  2\varepsilon<\alpha
 \end{equation}
 and
 \begin{equation}\label{Q7}
  \sigma>1+\frac{2-\alpha+2\varepsilon}{n+2-2\varepsilon}\lambda.
 \end{equation}
 Let
 \begin{equation}\label{Q8}
  \gamma=\frac{n+2-\varepsilon}{2\lambda}\quad\text{and}\quad p=\frac{2\lambda}{n+2-2\varepsilon}.
 \end{equation}
 Then
 \begin{equation}\label{Q9}
  \gamma p>1.
 \end{equation}
 Let $\{T_j\}\subset(0,1)$ be a sequence such that $T_j \to0$ as $j\to\infty$.  Define $w_j :(-\infty,T_j )\to(0,\infty)$ by
 \begin{equation}\label{Q10}
  w_j (t)=(T_j -t)^{-1/p}
 \end{equation}
 and define $t_j \in(0,T_j )$ by
 \begin{equation}\label{Q11}
  w_j (t_j )=t^{-\gamma}_{j}.
 \end{equation}
 Then
 \begin{equation}\label{Q12}
  \frac{T_j -t_j}{t_j}=\frac{w_j (t_j )^{-p}}{t_j}=t^{\gamma p-1}_{j}\to0\quad\text{as }j\to\infty
 \end{equation}
 by \eqref{Q9}.
 
 Choose $a_j \in((t_j+T_j)/2,T_j )$ such that $w_j (a_j )>j\varphi(a_j )$.  Then
 \begin{equation}\label{Q13}
  \frac{w_j (a_j )}{\varphi(a_j )}\to\infty\quad\text{as }j\to\infty.
 \end{equation}
 Let $h_j (s)=\sqrt{a_j-s}$ and $H_j (s)=\sqrt{a_j +\varepsilon_j -s}$ where $\varepsilon_j >0$ satisfies
 \begin{equation}\label{Q14}
  a_j +2\varepsilon_j <T_j, \quad t_j-\varepsilon_j >t_j/2, \quad
\varepsilon_j <T^{2}_{j}, \quad \text{and} \quad w_j (t_j -\varepsilon_j )>\frac{w_j (t_j )}{2}.
 \end{equation}
 Define
 \begin{align*}
  &\omega_j =\{(y,s)\in\mathbb{R}^n \times\mathbb{R}:|y|<h_j (s)\quad\text{and}\quad t_j <s<a_j \},\\
  &\Omega_j =\{(y,s)\in\mathbb{R}^n \times\mathbb{R}:|y|<H_j (s)\quad\text{and}\quad t_j -\varepsilon_j <s<a_j +\varepsilon_j \}.
 \end{align*}
 By taking a subsequence we can assume the sets $\Omega_j$ are pairwise disjoint.
 
 Let $\chi_j :\mathbb{R}^n \times\mathbb{R}\to[0,1]$ be a $C^\infty$ function such that $\chi_j \equiv1$ in $\omega_j$ and $\chi_j \equiv0$ in $\mathbb{R}^n \times\mathbb{R}\backslash\Omega_j$.  Define $f_j ,\,u_j :\mathbb{R}^n \times\mathbb{R}\to[0,\infty)$ by
 \begin{equation}\label{Q15}
  f_j (y,s)=\chi_j (y,s)w^{\prime}_{j}(s)
 \end{equation}
 and
 \begin{equation}\label{Q16}
  u_j (x,t)=\iint_{\mathbb{R}^n \times\mathbb{R}}\Phi(x-y,t-s)f_j (y,s)\,dy\,ds.
 \end{equation}
 Then $f_j$ and $u_j$ are $C^\infty$ and
 \begin{equation}\label{Q17}
  Hu_j =f_j \quad\text{in }\mathbb{R}^n \times\mathbb{R}.
 \end{equation}
By Theorem \ref{thmB.2} with $p=n+2$ and $q=\infty$ we see that
 \begin{align}\label{Q18}
  \notag\biggl\|\iint_{\Omega_j \backslash \omega_j}&\Phi(x-y,t-s)
w^{\prime}_{j}(s)\,dy\,ds\biggr\|_{L^\infty (\mathbb{R}^n \times(0,1))}\\
  \notag &=\biggl\|\iint_{\mathbb{R}^n \times(0,1)}
\Phi(x-y,t-s)\chi_{\Omega_j \backslash \omega_j}(y,s)
w^{\prime}_{j}(s)\,dy\,ds\biggr\|_{L^\infty (\mathbb{R}^n \times(0,1))}\\
  \notag &\leq C_n \| w^{\prime}_{j}(s)\|_{L^{n+2}(\Omega_j \backslash \omega_j )}\\
  &\leq w_j (t_j )
 \end{align}
 provided we decrease $\varepsilon_j$ if necessary because $|\Omega_j \backslash \omega_j |\to0$ as $\varepsilon_j \to0$.

 Also, it follows from \eqref{Q1}$_2$, \eqref{Q8}$_1$, \eqref{5.37},
 \eqref{Q14}$_1$, \eqref{Q12}, and \eqref{Q11} that there exists a
 positive constant $M$, independent of $j$, such that for
 $(x,t)\in\Omega_j$ we have
 \begin{equation}\label{Q19}
  M\Psi(x,t)>\frac{2^{\gamma+1}}{t^\gamma}>\frac{2^{\gamma+1}}{T^{\gamma}_{j}}>\frac{2^{\gamma+1}}{(2t_j )^\gamma}=2w_j (t_j ),
 \end{equation}
 provided we take a subsequence if necessary, where $\Psi$ is defined
 by \eqref{5.33}.
 
 In order to obtain a lower bound for $u_j$ in $\omega_j$, note first
 that for $s<t\leq a_j +\varepsilon_j$ and $|x|\leq H_j (t)$ we have by
 Lemma \ref{lem2.10} that
 \begin{equation}\label{Q20}
  \int_{|y|<H_j (s)}\Phi(x-y,t-s)\,dy\geq b
 \end{equation}
 for some constant
 \begin{equation}\label{Q21}
  b=b(n)\in(0,1).
 \end{equation}
 Next using \eqref{Q20} and \eqref{Q21}, we find for $(x,t)\in\Omega_j$ that
 \begin{align*}
  \iint_{\Omega_j}\Phi(x-y,t-s)w^{\prime}_{j}(s)\,dy\,ds&=\int^{t}_{t_j -\varepsilon_j}w^{\prime}_{j}(s)\left(\int_{|y|<H_j (s)}\Phi(x-y,t-s)\,dy\right)ds\\
  &\geq b(w_j (t)-w_j (t_j -\varepsilon_j ))\\
  &\geq bw_j (t)-w_j (t_j ).
 \end{align*}
 It therefore follows from \eqref{Q16}, \eqref{Q15}, and \eqref{Q18} that for $(x,t)\in\Omega_j$ we have 
 \begin{align}\label{Q22}
  \notag u_j (x,t)&\geq\iint_{\omega_j }\Phi(x-y,t-s)w^{\prime}_{j}(s)\,dy\,ds\\
  \notag &=\iint_{\Omega_j}\Phi(x-y,t-s)w^{\prime}_{j}(s)\,dy\,ds-\iint_{\Omega_j \backslash \omega_j}\Phi(x-y,t-s)w^{\prime}_{j}(s)\,dy\,ds\\
  &\geq bw_j (t)-2w_j (t_j ).
 \end{align}
 
 Define $\beta>0$ by
 \begin{equation}\label{Q23}
  \frac{1}{\beta}-\frac{1}{\lambda}=\frac{2}{n+2}.
 \end{equation}
 Then by \eqref{Q1}
 \begin{equation}\label{Q24}
  \frac{2}{n+2}<\frac{1}{\beta}=\frac{1}{\lambda}+\frac{2}{n+2}\leq\frac{n}{n+2}+\frac{2}{n+2}=1
 \end{equation}
 and by \eqref{Q8}
 $$p>\frac{2\lambda}{n+2}=\frac{2}{(n+2)/\lambda}=\frac{2}{\frac{n+2}{\beta}-2}=\frac{2\beta}{n+2-2\beta}.$$
 Thus
 \begin{equation}\label{Q25}
  \frac{n}{2}-\frac{\beta(p+1)}{p}+1=\frac{(n+2-2\beta)p-2\beta}{2p}>0.
 \end{equation}
 Next we slightly increase $\beta$ in such a way that \eqref{Q25} and the first inequality in \eqref{Q24} still hold.  Then instead of \eqref{Q23} and \eqref{Q24} we get
 \begin{equation}\label{Q26}
  \frac{1}{\beta}-\frac{1}{\lambda}<\frac{2}{n+2}
 \end{equation}
 and
 \begin{equation}\label{Q27}
  \frac{2}{n+2}<\frac{1}{\beta}<1
 \end{equation}
 respectively.
 
 From \eqref{Q15}, \eqref{Q10}, \eqref{Q14}$_1$ and \eqref{Q25} we find that
 \begin{align}
  \notag p^\beta &\iint_{\mathbb{R}^n \times\mathbb{R}}f_j (y,s)^\beta \,dy\,ds\leq p^\beta \iint_{\Omega_j}w^{\prime}_{j}(s)^\beta \,dy\,ds\\
  \notag &\leq p^\beta \int^{T_j}_{0}\int_{|y|<\sqrt{T_j -s}}w^{\prime}_{j}(s)^\beta \,dy\,ds\\
  \notag &=|B_1(0)|\int^{T_j}_{0}(T_j -s)^{n/2-\beta(p+1)/p}ds\\
  \label{Q28} &=|B_1(0)|\int^{T_j}_{0}\tau^{n/2-\beta(p+1)/p}\,d\tau\to0\quad\text{as }j\to\infty.
 \end{align}
 Hence by \eqref{Q16}, \eqref{Q26}, \eqref{Q27}, and Theorem \ref{thmB.2} we obtain
 \begin{equation}\label{Q29}
  \| u_j \|_{L^\lambda (\mathbb{R}^n \times(0,1))}\to0\quad\text{as }j\to\infty.
 \end{equation}
 Repeating the derivation of \eqref{Q28} with $\beta$ replaced with $1$, we find that
 $$\iint_{\mathbb{R}^n \times\mathbb{R}}f_j (y,s)\,dy\,ds\to0\quad\text{as }j\to\infty.$$
 Thus
 $$\iint_{\mathbb{R}^n \times\mathbb{R}}\sum^{\infty}_{j=1}f_j (y,s)d\eta ds<\infty$$
 provided we take a subsequence if necessary.  Hence, since the
 $C^\infty$ functions $f_j$ have disjoint supports, it follows from
 Theorem \ref{thm5.2} that the function
 $u:\mathbb{R}^n \times(0,\infty)\to(0,\infty)$ defined by
 \begin{equation}\label{Q30}
  u(x,t)=(M+1)\Psi(x,t)+\sum^{\infty}_{j=1}u_j (x,t)
 \end{equation}
 is $C^\infty$ and from \eqref{Q17} and Theorem \ref{thm5.2} we have 
 \begin{equation}\label{Q31}
  Hu=\sum^{\infty}_{j=1}f_j \quad\text{in }\mathbb{R}^n \times(0,\infty).
 \end{equation}
 By \eqref{Q29} and Theorem \ref{thm5.2},
 $$u\in L^\lambda (\mathbb{R}^n \times(0,1))$$
 provided we take a subsequence of $u_j$ if necessary.  Thus \eqref{Q2} holds.
 
 We now prove \eqref{Q5}.  By \eqref{Q31} and \eqref{Q15} we have 
 $$Hu\equiv0\quad\text{in }(\mathbb{R}^n \times(0,1))\backslash\bigcup^{\infty}_{j=1}\Omega_j.$$
 Hence to prove \eqref{Q5}, it suffice to prove there exists a positive constant $C=C(n,\lambda,\sigma,\alpha)$ such that
 \begin{equation}\label{Q32}
  0\leq Hu\leq C(\Phi^{\alpha/n}*u^\lambda )u^\sigma \quad\text{in }\Omega_j
 \end{equation}
 for $j=1,2,...$.
 
 By \eqref{Q30}, \eqref{Q22}, and \eqref{Q19} we have for $(x,t)\in\Omega_j$ that
 \begin{equation}\label{Q33}
  u(x,t)\geq(M+1)\Psi(x,t)+bw_j (t)-2w_j (t_j )\geq\Psi(x,t)+bw_j (t).
 \end{equation}
 Thus for $(x,t)\in\Omega_j$ we see by \eqref{Q31}, \eqref{Q15}, and \eqref{Q10} that
 \begin{align*}
  Hu(x,t)&=f_j (x,t)\leq w^{\prime}_{j}(t)=\frac{1}{p}w_j (t)^{1+p}\\
  &=\frac{1}{p}w_j(t)^{1+p-\sigma}w_j(t)^\sigma
\le \frac{1}{pb^\sigma}w_j(t)^{1+p-\sigma}u(x,t)^\sigma.
 \end{align*}
 Hence to prove \eqref{Q32} it suffices to show
 \begin{equation}\label{Q34}
  w_j (t)^{1+p-\sigma}< C\iint_{\mathbb{R}^n \times(0,1)}\Phi(x-y,t-s)^{\alpha/n}u(y,s)^\lambda \,dy\,ds\quad\text{for }(x,t)\in\Omega_j .
 \end{equation}
 
 Our proof of \eqref{Q34} consists of two cases.
\medskip

 \noindent \textbf{Case I.} Suppose
 \begin{equation}\label{Q35}
  (x,t)\in\Omega_j \quad\text{and}\quad t\leq\frac{T_j +t_j}{2}.
 \end{equation}
 Then using \eqref{Q14}$_4$, \eqref{Q8}$_2$, \eqref{Q1}$_2$ and the fact that $w_j$ is an increasing function we have
 \begin{align*}
  \frac{1}{2}&\leq\frac{w_j (t)}{2w_j (t_j -\varepsilon_j )}<\frac{w_j (t)}{w_j (t_j )}\\
  &\leq\left(\frac{T_j -\frac{T_j +t_j}{2}}{T_j -t_j}\right)^{-1/p}=2^{1/p}<2^{n/2}.
 \end{align*}
 Also by \eqref{Q11} and \eqref{Q12}
 $$\frac{w_j (t_j )}{T^{-\gamma}_{j}}=\left(\frac{T_j}{t_j}\right)^\gamma \in(1,2)$$
 provided we take a subsequence if necessary.  Thus \eqref{Q35} implies
 \begin{equation}\label{Q36}
  \frac{1}{2}<\frac{w_j (t)}{T^{-\gamma}_{j}}<2^{(n+2)/2}.
 \end{equation}
 Next making the change of variables
 \[
x=\sqrt{T_j}\xi,\quad t=T_j \tau;\qquad y=\sqrt{T_j}\eta,\quad s=T_j
\zeta;\qquad\text{and}\quad \hat{y}=\sqrt{T_j}\hat{\eta},
\]
 we get for $(y,s)\in\mathbb{R}^n \times(0,\infty)$ that
 \begin{align*}
  \Psi(y,s)&=\int_{\mathbb{R}^n}\Phi(y-\hat{y},s)|\hat{y}|^{-2\gamma}d\hat{y}\\
  &=\int_{\mathbb{R}^n}\frac{1}{T^{n/2}_{j}}\Phi(\eta-\hat{\eta},\zeta)T^{-\gamma}_{j}|\hat{\eta}|^{-2\gamma}T^{n/2}_{j}d\hat{\eta}\\
  &=T^{-\gamma}_{j}\Phi(\eta,\zeta)
 \end{align*}
 and thus for $(x,t)\in\Omega_j$ we obtain from \eqref{Q8}$_1$ that
 \begin{align}
  \notag \iint_{\mathbb{R}^n
   \times(0,1)}&\Phi(x-y,t-s)^{\alpha/n}\Psi(y,s)^\lambda \,dy\,ds\\
&=\iint_{\mathbb{R}^n \times(0,\tau)}\left(\frac{1}{T^{n/2}_{j}}\Phi(\xi-\eta,\tau-\zeta)\right)^{\alpha/n}(T^{-\gamma}_{j}\Psi(\eta,\zeta))^\lambda
   T^{\frac{n+2}{2}}_{j}\,d\eta\,d\zeta \notag\\
&\geq\frac{G(\xi,\tau)}{\sqrt{T_j}^{\alpha+2\gamma\lambda-(n+2)}}
=\frac{G(\xi,\tau)}{\sqrt{T_j}^{\alpha-\varepsilon}}\label{Q37}
 \end{align}
 where
 $$G(\xi,\tau):=\iint_{B_1 (0)\times(1/2,\tau)}\Phi(\xi-\eta,\tau-\zeta)^{\alpha/n}\Psi(\eta,\zeta)^\lambda \,d\eta\,d\zeta.$$
 Since by \eqref{Q35}$_1$, \eqref{Q14}$_1$, \eqref{Q12}, and \eqref{Q14}$_3$, 
 $$1>\tau=\frac{t}{T_j}\geq\frac{t_j -\varepsilon_j}{T_j}\to1\quad\text{as }j\to\infty$$
 we have by \eqref{Q35}$_1$ that
 $$|\xi|=\frac{|x|}{\sqrt{T_j}}<\frac{\sqrt{T_j -t}}{\sqrt{T_j}}=\sqrt{1-\frac{t}{T_j}}\to0\quad\text{as }j\to\infty.$$
 Thus, since $G$ is clearly continuous at $(\xi,\tau)=(0,1)$ and $G(0,1)>0$ we have by \eqref{Q37} that
 \begin{equation}\label{Q38}
  \iint_{\mathbb{R}^n \times(0,1)}\Phi(x-y,t-s)^{\alpha/n}\Psi(y,s)^\lambda \,dy\,ds\geq\frac{C}{\sqrt{T_j}^{\alpha-\varepsilon}}\quad\text{for }(x,t)\in\Omega_j
 \end{equation}
 provided we take a subsequence if necessary.
 
 Since by \eqref{Q7} and \eqref{Q8}$_2$
 $$\sigma-1>\left(\frac{2-\alpha+2\varepsilon}{n+2-2\varepsilon}\right)\lambda =p-\frac{\alpha-2\varepsilon}{n+2-2\varepsilon}\lambda>p-\frac{\alpha-\varepsilon}{n+2-\varepsilon}\lambda$$
 we have by \eqref{Q8}$_1$ that
 $$\gamma(1+p-\sigma)<\gamma\left(\frac{(\alpha-\varepsilon)\lambda}{n+2-\varepsilon}\right)=\frac{\alpha-\varepsilon}{2}.$$
 Thus \eqref{Q34} follows from \eqref{Q33}, \eqref{Q36}, and \eqref{Q38}.
 \medskip

 \noindent \textbf{Case II.} Suppose
 \begin{equation}\label{Q39}
  (x,t)\in\Omega_j \quad\text{and}\quad t\geq\frac{T_j +t_j}{2}.
 \end{equation}
 Then for $s<t$ we have by Lemma \ref{lem2.10} that
 $$\int_{|y|<H_j (s)}\Phi(x-y,t-s)^{\alpha/n}dy\geq\frac{C}{(t-s)^{(\alpha-n)/2}}$$
 for some positive constant $C=C(n,\alpha)$.  Thus for $(x,t)$ satisfying \eqref{Q39} we get
 \begin{align}
  \notag \iint_{\Omega_j}\Phi&(x-y,t-s)^{\alpha/n}w_j (s)^\lambda \,dy\,ds\geq\int^{t}_{t_j}w_j (s)^\lambda \left(\int_{|y|<H_j (s)}\Phi(x-y,t-s)^{\alpha/n}dy\right)ds\\
  \notag &\geq C\int^{t}_{t_j}\frac{ds}{(t-s)^{(\alpha-n)/2}(T_j -s)^{\lambda/p}}\\
  \notag &=\frac{C}{(T_j -t )^{(\alpha-n)/2+\lambda/p-1}}\int^{\frac{T_j -t_j}{T_j -t}}_{1}\frac{dz}{(z-1)^{(\alpha-n)/2}z^{\lambda/p}}\text{ where }T_j -s=(T_j -t)z\\
  \notag &\geq\frac{C}{(T_j -t)^{(\alpha-n)/2+\lambda/p-1}}\int^{2}_{1}\frac{dz}{(z-1)^{(\alpha-n)/2}z^{\lambda/p}}\\
  \label{Q40} &=\frac{C}{(T_j -t)^{(\alpha-n)/2+\lambda/p-1}}=\frac{C}{(T_j -t)^{(\alpha-2\varepsilon)/2}}
 \end{align}
 by \eqref{Q8}$_2$.
 
 Since by \eqref{Q7} and \eqref{Q8}$_2$
 $$\sigma-1>\frac{2-\alpha+2\varepsilon}{n+2-2\varepsilon}\lambda=p\frac{2-\alpha+2\varepsilon}{2}$$
 we see that
 $$\frac{1}{p}(1+p-\sigma)=1+\frac{1-\sigma}{p}<1+\frac{\alpha-2-2\varepsilon}{2}=\frac{\alpha-2\varepsilon}{2}.$$
 Thus \eqref{Q34} follows from \eqref{Q33}, \eqref{Q10}, and \eqref{Q40}.
 
 Finally from \eqref{Q33} and \eqref{Q13} we get
 $$\frac{u(0,a_j )}{\varphi(a_j )}\geq\frac{bw_j (a_j )}{\varphi(a_j )}\to\infty\quad\text{as }j\to\infty,$$
 which gives \eqref{Q4}.
\end{proof}

\appendix

\section{Representation formula}\label{secA}
In this appendix we provide the following representation formula for
nonnegative supertemperatures.

\begin{thm}\label{thmA}
Suppose 
$0<R_1<R_2<R_3$ are constants and
$u$ is a $C^{2,1}$ nonnegative solution of
\begin{equation}\label{A.1}
 Hu\ge 0\quad \text{in } B_{\sqrt{R_3}}(0)\times (0,R_3) 
\subset \R^n \times \R,\ n\ge 1,
\end{equation}
where $Hu = u_t-\Delta u$ is the heat operator. Then
\begin{equation}\label{A.2}
 Hu \in L^1(B_{\sqrt{R_2}}(0) \times (0,R_2)),
\end{equation}
\begin{equation}\label{A.2.5}
u^\beta  \in L^1(B_{\sqrt{R_1}}(0) \times (0,R_1))\quad\text{for }
1\le\beta <\frac{n+2}{n}
\end{equation}
and there exist a finite positive Borel measure $\mu$ on
$B_{\sqrt{R_2}}(0)$ and a bounded function\\
$h\in C^{2,1}(B_{\sqrt{R_1}}(0) \times (-R_1,R_1))$ satisfying
\begin{alignat}{2}
 \label{A.3}
Hh &= 0  \quad \text{in } &B_{\sqrt{R_1}}(0)\times (-R_1,R_1)\\
\label{A.4}
 h &= 0  \quad \text{in } &B_{\sqrt{R_1}}(0)\times (-R_1,0] 
\end{alignat}
such that
\begin{equation}\label{A.5}
 u = N +v+h\quad \text{in } B_{\sqrt{R_1}}(0)\times (0,R_1)
\end{equation}
where
\begin{align}\label{A.6}
 N(x,t) &:= \int^{R_2}_0 \int_{|y|<\sqrt{R_2}} \Phi(x-y,t-s) Hu(y,s)\,dy\,ds,\\
\label{A.7}
v(x,t) &:= \int_{|y|<\sqrt{R_2}} \Phi(x-y,t)\,d\mu(y),
\end{align}
and $\Phi$ is the heat kernel \eqref{1.3}.
\end{thm}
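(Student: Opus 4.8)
The plan is to establish \eqref{A.2}--\eqref{A.5} in three moves --- an integrability bound on $Hu$, a weak-compactness argument producing the initial-trace measure $\mu$, and a comparison on an intermediate cylinder that isolates the bounded caloric remainder $h$ --- and then to read off \eqref{A.2.5}. Only $u\ge0$ and $Hu\ge0$ will be used, and we pass to subsequences freely. The hard part will be the last move: the function $u-N-v$ is easily seen to be caloric, but showing it is \emph{bounded} near $t=0$ (rather than merely caloric) is what requires the gap $R_1<R_2<R_3$ together with a cancellation between the free heat kernel $\Phi$ and the Dirichlet heat kernel of an intermediate cylinder.

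First, I would prove $Hu\in L^1(B_{\sqrt{R_2}}(0)\times(0,R_2))$. Fix $R'\in(R_2,R_3)$, let $\psi_1>0$ be the first Dirichlet eigenfunction of $-\Delta$ on $B_{\sqrt{R'}}(0)$ with eigenvalue $\lambda_1>0$, and set $\phi(y,s):=\psi_1(y)e^{\lambda_1 s}$, so that $-\phi_s-\Delta\phi=0$, $\phi>0$ in $B_{\sqrt{R'}}(0)$, and $\phi=0$, $\partial_\nu\phi\le0$ on $\partial B_{\sqrt{R'}}(0)$. Green's identity on $B_{\sqrt{R'}}(0)\times(t_0,R')$, for $0<t_0<R'$, gives
\[
\iint_{B_{\sqrt{R'}}(0)\times(t_0,R')}\phi\,Hu=\int_{B_{\sqrt{R'}}(0)}\phi u\big|_{s=R'}\,dy-\int_{B_{\sqrt{R'}}(0)}\phi u\big|_{s=t_0}\,dy+\int_{t_0}^{R'}\!\!\int_{\partial B_{\sqrt{R'}}(0)}u\,\partial_\nu\phi\,dS\,ds .
\]
Since $Hu\ge0$, $u\ge0$ and $\partial_\nu\phi\le0$, both $\iint\phi\,Hu$ and $\int\phi u|_{s=t_0}$ are bounded above by the fixed finite number $\int_{B_{\sqrt{R'}}(0)}\phi u|_{s=R'}\,dy$, uniformly in $t_0$. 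Letting $t_0\downarrow0$ (monotone convergence, as $\phi\,Hu\ge0$) and using $\phi\ge c_0>0$ on $\overline{B_{\sqrt{R_2}}(0)}$ gives \eqref{A.2} and the extra bound $\sup_{0<t_0<R_2}\int_{\overline{B_{\sqrt{R_2}}(0)}}u(\cdot,t_0)\,dy<\infty$; this is exactly where $R_2<R_3$ enters.

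Next, write $f:=Hu\ge0$. By the previous step and continuity of $f$ on the open set, the potential $N$ of \eqref{A.6} is finite and continuous on $B_{\sqrt{R_2}}(0)\times(0,R_2)$, vanishes for $t\le0$, satisfies $HN=f$ in $\mathcal D'$, and by the heat-potential estimates of Theorem \ref{thmB.2} (case $p=1$) lies in $L^q(B_{\sqrt{R_1}}(0)\times(0,R_1))$ for every $q<\tfrac{n+2}{n}$; hence $u-N$ is a classical temperature on $B_{\sqrt{R_2}}(0)\times(0,R_2)$. The mass bound just obtained gives, along a sequence $t_k\downarrow0$, weak-$*$ convergence $u(\cdot,t_k)\,dy\rightharpoonup\mu$ for a finite positive Borel measure $\mu$ on $\overline{B_{\sqrt{R_2}}(0)}$; any mass on $\partial B_{\sqrt{R_2}}(0)$ contributes, for $x\in B_{\sqrt{R_1}}(0)$, a bounded caloric term vanishing as $t\to0^+$ that we absorb into $h$, so we may take $\mu$ supported in $B_{\sqrt{R_2}}(0)$ and define $v$ by \eqref{A.7}. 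Then $v$ is caloric for $t>0$, and $\|v(\cdot,t)\|_{L^q}\le C\,t^{-\frac n2(1-1/q)}\mu(B_{\sqrt{R_2}}(0))$ shows $v\in L^q(B_{\sqrt{R_1}}(0)\times(0,R_1))$ for $q<\tfrac{n+2}{n}$.

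Finally, for the remainder, apply Fubini to $\int_0^{R_1}\int_{B_{\sqrt{R_2}}(0)}u\,dy\,ds<\infty$ to choose $\rho\in(R_1,R_2)$ with $\int_0^{R_1}\int_{\partial B_{\sqrt\rho}(0)}u\,dS\,ds<\infty$, and let $G$ be the Dirichlet heat kernel of $B_{\sqrt\rho}(0)$. The parabolic Green representation of the supertemperature $u$ on $B_{\sqrt\rho}(0)\times(t_0,t)$ expresses $u$ as the sum of the initial term $\int G(x,t;y,t_0)u(y,t_0)\,dy$, the cylinder heat potential $\iint G(x,t;y,s)f(y,s)\,dy\,ds$, and a nonnegative lateral term; all three are monotone in $t_0$, and letting $t_0=t_k\downarrow0$ identifies the limit of the first term as $\int_{B_{\sqrt\rho}(0)}G(x,t;y,0)\,d\mu(y)$ (the boundary mass of $\mu$ drops out since $G(x,t;\,\cdot\,,0)\equiv0$ on $\partial B_{\sqrt\rho}(0)$). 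The crucial estimate, proved by the maximum principle exactly as in Lemma \ref{lem2.8}, is
\[
0\le\Phi(x-y,t-s)-G(x,t;y,s)\le\min\!\Big(\Phi(x-y,t-s),\;C_n\big(\operatorname{dist}(y,\partial B_{\sqrt\rho}(0))\big)^{-n}\Big).
\]
Combined with $\operatorname{dist}(B_{\sqrt{R_1}}(0),\partial B_{\sqrt\rho}(0))>0$, with $f\in L^1$, with finiteness of $\mu$, and with the chosen lateral integrability of $u$, this bounds by a constant on $B_{\sqrt{R_1}}(0)\times(0,R_1)$ each of: the difference between $v$ and the limiting initial term, the difference between $N$ and the cylinder potential, and the lateral term; each such difference is moreover caloric there and tends to $0$ as $t\to0^+$ by dominated convergence (the relevant integrands vanish for $s\ge t$ or as $t\to0$). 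Therefore $h:=u-N-v$ is a bounded temperature on $B_{\sqrt{R_1}}(0)\times(0,R_1)$ with $h(\cdot,t)\to0$ as $t\to0^+$; extending $h\equiv0$ on $B_{\sqrt{R_1}}(0)\times(-R_1,0]$, the identity $\langle Hh,\varphi\rangle=\lim_{\varepsilon\downarrow0}\int h(x,\varepsilon)\varphi(x,\varepsilon)\,dx=0$ shows $Hh=0$ in $\mathcal D'(B_{\sqrt{R_1}}(0)\times(-R_1,R_1))$, so $h\in C^{2,1}$ there and \eqref{A.3}--\eqref{A.5} hold. Now \eqref{A.2.5} is immediate from $u=N+v+h$: $h$ is bounded while $N,v\in L^q$ for all $q<\tfrac{n+2}{n}$.
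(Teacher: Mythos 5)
Your argument is correct in outline, but it takes a genuinely different route from the paper's. The paper's proof of Theorem~\ref{thmA} delegates nearly everything to \cite{T2011}: it cites that reference for the $\beta=1$ case with specific radii, asserts that general $0<R_1<R_2<R_3$ follows by routine modifications, and then proves \eqref{A.2.5} only for $1<\beta<(n+2)/n$ by applying Theorem~\ref{thmB.2} to $N$ and a short H\"older/Jensen computation to $v$. You instead reconstruct the entire representation from scratch: the $L^1$-bound on $Hu$ and the uniform mass bound via the Dirichlet-eigenfunction multiplier $\psi_1 e^{\lambda_1 s}$; extraction of the initial trace $\mu$ by weak-$*$ compactness; and isolation of the bounded caloric remainder $h$ by comparing $\Phi$ with the Dirichlet heat kernel $G$ of an intermediate cylinder via the maximum-principle estimate $0\le\Phi-G\le\min\bigl(\Phi,\,C_n\,\mathrm{dist}(y,\partial B_{\sqrt\rho})^{-n}\bigr)$. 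Your final step obtaining \eqref{A.2.5} from the decomposition (Theorem~\ref{thmB.2} for $N$, the kernel decay $\|v(\cdot,t)\|_{L^q}\lesssim t^{-\frac{n}{2}(1-1/q)}\mu(B_{\sqrt{R_2}}(0))$ for $v$, boundedness for $h$) is essentially the paper's own argument. What your route buys is self-containedness: the reader need not unpack \cite{T2011} nor verify that its proof rescales to general radii. Two spots to flesh out if you keep this version: the Green's-identity step requires taking the test cylinder strictly inside $B_{\sqrt{R_3}}(0)\times(0,R_3)$ so that $u$ is $C^{2,1}$ up to its closure; and the claim that the difference terms making up $h$ tend to $0$ as $t\to0^+$ by dominated convergence should invoke explicitly both the uniform bound on $\Phi-G$ (available because $\mathrm{dist}(B_{\sqrt{R_1}}(0),\partial B_{\sqrt\rho}(0))>0$) and the finiteness of $\mu$ and of $\|Hu\|_{L^1}$, since pointwise convergence of $\Phi-G$ to $0$ is not uniform in $y$ near $\partial B_{\sqrt\rho}(0)$.
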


\begin{proof}
  When $\beta=1$, $R_1=1$, $R_2=4$, and $R_3=16$, Theorem \ref{thmA}
  was proved in \cite{T2011}.  The proof of Theorem \ref{thmA} when
  $\beta=1$ is obtained by making straighforward changes to the proof
  in \cite{T2011}.  It remains only to prove \eqref{A.2.5} for
  $1<\beta<(n+2)/n$. To do this, it suffices by \eqref{A.5} to show
\begin{equation}\label{A.8}
N^\beta\in L^1(\R^n\times(0,R_2))\quad\text{for }1<\beta<(n+2)/n
\end{equation}
and 
\begin{equation}\label{A.9}
v^\beta\in L^1(\R^n\times(0,R_2))\quad\text{for }1<\beta<(n+2)/n.
\end{equation}
Theorem \ref{thmB.2} and \eqref{A.2} imply \eqref{A.8}.

Finally, for $t>0$, $\beta>1$, and $\beta'$ the conjugate H\"older
exponent of $\beta$ we have
\begin{align*}
\int_{\R^n}v(x,t)^\beta\,dx
&=\int_{\R^n}\left(\int_{|y|<\sqrt{R_2}}\Phi(x-y,t)\,d\mu(y)\right)^\beta dx\\
&\le \int_{\R^n}\left(\int_{|y|<\sqrt{R_2}}1^{\beta'}\,d\mu(y)\right)^{\beta/\beta'}
\int_{|y|<\sqrt{R_2}}\Phi(x-y,t)^\beta\,d\mu(y)\,dx\\
&=C\int_{|y|<\sqrt{R_2}}\left(\int_{\R^n}\Phi(x-y,t)^\beta\,dx\right)\,d\mu(y)\\
&=C\int_{|y|<\sqrt{R_2}}t^{-n\beta/2}\int_{\R^n}e^{-\frac{\beta|x-y|^2}{4t}}dx\,d\mu(y)\\
&=Ct^{n(1-\beta)/2}\quad\text{by Lemma \ref{lem2.2}}
\end{align*}
which implies \eqref{A.9}.
\end{proof}

\begin{rem}\label{remA}
If $u$ is a $C^{2,1}$ nonnegative solution of \eqref{A.1} where
$R_3>0$ then by Theorem \ref{thmA}, 
\[
u^\beta  \in L^1(B_{\sqrt{R}}(0) \times (0,R))\quad\text{for }
1\le\beta <\frac{n+2}{n}\text{ and }0<R<R_3.
\]
Thus the conclusion \eqref{A.2.5} in Theorem \ref{thmA} can be
replaced with 
\[
u^\beta  \in L^1(B_{\sqrt{R_2}}(0) \times (0,R_2))\quad\text{for }
1\le\beta <\frac{n+2}{n}.
\]
\end{rem}

\section{Heat potential estimates}\label{secB}

In this appendix we provide estimates for the heat potentials
\[
(J_\alpha f)(x,t)=\iint_{\R^n\times\R}\Phi(x-y,t-s)^{\frac{n+2-\alpha}{n}}f(y,s)\,dy\,ds
\]
and
\[
(V_\alpha f)(x,t)=\iint_{\Omega}
  \Phi(x-y,t-s)^{\frac{n+2-\alpha}{n}}f(y,s)\,dy\,ds, 
\]
where $\Phi$ is given by \eqref{1.3}, $\Omega=\R^n\times (a,b)$, and
$\alpha\in(0,n+2)$. The proofs of these estimates are given in
\cite[Appendix B]{GT2016}.

\begin{thm}\label{thmB.1}
 Suppose $0<\alpha<n+2$ and $1<p<\frac{n+2}{\alpha}$ are constants and
 $f:\R^n \times\R\to\R$ is a nonnegative measurable function.  Let 
$$q=\frac{(n+2)p}{n+2-\alpha p}.$$ 
 Then
 $$\| J_\alpha f\|_{L^q (\R^n \times\R)}\leq C\| f\|_{L^p (\R^n
   \times\R)}$$ 
where $C=C(n,p,\alpha)$ is a positive constant.
\end{thm}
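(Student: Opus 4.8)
The plan is to prove Theorem \ref{thmB.1} by the classical Hardy--Littlewood--Sobolev scheme, carried out with respect to the parabolic scaling $(x,t)\mapsto(\lambda x,\lambda^2 t)$, under which $\R^n\times\R$ has homogeneous dimension $N:=n+2$. Write $K(x,t):=\Phi(x,t)^{(n+2-\alpha)/n}$ and introduce the parabolic gauge $\rho(x,t):=\max(|x|,\sqrt t)$ for $t>0$ (with $K$ supported in $\{t>0\}$). The first step is the pointwise kernel bound
\[
K(x,t)\le c_n\,\rho(x,t)^{-(N-\alpha)}\qquad\text{for }t>0 .
\]
This is immediate from \eqref{1.9}: if $|x|^2\le t$ the exponential factor is $\le 1$ and $t^{-(N-\alpha)/2}=\rho^{-(N-\alpha)}$, while if $|x|^2>t$, setting $u:=|x|^2/t>1$ gives $K=c\,|x|^{-(N-\alpha)}u^{(N-\alpha)/2}e^{-c_0u}$ with $c_0>0$, and $\sup_{u>0}u^{(N-\alpha)/2}e^{-c_0u}<\infty$ because $N-\alpha>0$. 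Since $K$ is parabolic-homogeneous of degree $-(N-\alpha)$, one may alternatively quote the general HLS inequality on the parabolic homogeneous space, but the self-contained argument below is cleaner.

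Next I would split the kernel at parabolic scale $R>0$: $K=K\chi_{\{\rho<R\}}+K\chi_{\{\rho\ge R\}}=:K^R_1+K^R_2$, and prove the two elementary norm estimates
\[
\|K^R_1\|_{L^1(\R^n\times\R)}\le c\,R^{\alpha},\qquad
\|K^R_2\|_{L^{p'}(\R^n\times\R)}\le c\,R^{\alpha-N/p},
\]
where $p'$ is the conjugate exponent of $p$. Both follow from the pointwise bound by decomposing into parabolic dyadic shells $\{2^{j}R\le\rho<2^{j+1}R\}$ (whose measure is $\sim (2^jR)^{N}$) and summing a geometric series: the near sum ($j<0$) converges precisely because $\alpha>0$ and produces $R^{N-(N-\alpha)}=R^{\alpha}$; the far sum ($j\ge0$) of $\rho^{-p'(N-\alpha)}$ converges precisely because $p'(N-\alpha)>N$, i.e. $\tfrac1p>\tfrac{\alpha}{N}$, i.e. $p<\tfrac{n+2}{\alpha}$ — exactly the hypothesis — and produces $R^{N-p'(N-\alpha)}$, whose $p'$-th root is $R^{N/p'-(N-\alpha)}=R^{\alpha-N/p}$. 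The Gaussian integrations needed along the way are handled by Lemma \ref{lem2.2}.

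With these in hand the proof finishes in the standard way. For $f\ge0$ with $\|f\|_{L^p}$ finite, write $J_\alpha f=K^R_1*f+K^R_2*f$ (parabolic convolution). Hölder on the far part gives $K^R_2*f(z)\le\|K^R_2\|_{p'}\|f\|_p\le cR^{\alpha-N/p}\|f\|_p$. For the near part, decomposing $\{\rho<R\}$ into the shells above and using that for each $r>0$ the set $\{z:\rho(z)<r\}$ is a parabolic cylinder, so $\int_{\rho(z)<r}f(z_0-z)\,dz=\int_{\mathcal P_{r^2}(z_0)}f\le|\mathcal P_{r^2}(z_0)|\,Mf(z_0)$, one obtains $K^R_1*f(z_0)\le c\sum_{j\ge0}2^{-j\alpha}R^{\alpha}Mf(z_0)\le cR^{\alpha}Mf(z_0)$, where $M$ is the Hardy--Littlewood maximal operator over the cylinders $\mathcal P_r$. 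Optimizing in $R$ (balancing the two bounds at $R^{N/p}=\|f\|_{L^p}/Mf(z_0)$) and using the identity $1-\tfrac{\alpha p}{N}=p(\tfrac1p-\tfrac{\alpha}{N})=\tfrac pq$ yields the pointwise interpolation inequality
\[
J_\alpha f(z)\le c\,(Mf(z))^{p/q}\,\|f\|_{L^p}^{\,1-p/q}.
\]
Raising to the power $q$, integrating, and invoking the $L^p$-boundedness of $M$ for $p>1$ gives $\|J_\alpha f\|_{L^q}^q\le c\,\|f\|_{L^p}^{q-p}\|Mf\|_{L^p}^{p}\le c\,\|f\|_{L^p}^{q}$, which is the assertion; the general (signed) $f$ follows by replacing $f$ with $|f|$. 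The only ingredient beyond elementary manipulation with the explicit heat kernel is the parabolic maximal function theorem, but since $\{\mathcal P_r\}$ is a doubling family closed under parabolic translation and dilation, this is entirely standard (Vitali covering lemma); I expect the only point requiring care to be the verification of the two kernel-norm estimates and the one-variable integral convergences, where $\alpha>0$ and $p<(n+2)/\alpha$ enter exactly as the hypotheses demand.
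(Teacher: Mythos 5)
Your proof is correct. Note that the paper itself does not prove Theorem \ref{thmB.1}; it defers to \cite[Appendix B]{GT2016}, so there is no in-text argument to compare against. What you give is Hedberg's maximal-function proof of the Hardy--Littlewood--Sobolev inequality carried out in the parabolic homogeneous-space setting with $N=n+2$: the pointwise bound $K(z)\le c\,\rho(z)^{-(N-\alpha)}$ (which uses $N-\alpha>0$ so that $u^{(N-\alpha)/2}e^{-c_0 u}$ stays bounded), the split $K=K^R_1+K^R_2$, the two estimates $K^R_1*f\le c\,R^{\alpha}Mf$ and $K^R_2*f\le c\,R^{\alpha-N/p}\|f\|_{L^p}$, optimization in $R$ giving $J_\alpha f\le c\,(Mf)^{p/q}\|f\|_{L^p}^{1-p/q}$, and the $L^p$-boundedness of the parabolic maximal operator for $p>1$. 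The exponent arithmetic is consistent with the stated $q$ (equivalently $1/p-1/q=\alpha/(n+2)$), and the two convergence conditions you flag, $\alpha>0$ for the near shells and $p'(N-\alpha)>N$, i.e. $p<(n+2)/\alpha$, for the far shells, are exactly the hypotheses. One small point worth making explicit if you write this up: the cylinders $\mathcal{P}_r$ are one-sided in time, but the associated maximal operator is pointwise bounded by twice the maximal operator over the two-sided cylinders $B_{\sqrt r}(x)\times(t-r,t+r)$, which form a translation- and dilation-invariant doubling family, so the $L^p$ maximal theorem you invoke is indeed standard.
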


\begin{thm}\label{thmB.2}
Let $p,q\in[1,\infty]$,  $\alpha$, and $\delta$ satisfy 
\begin{equation}\label{B.1}
0\le \delta =\frac{1}{p}-\frac{1}{q}<\frac{\alpha}{n+2}<1.
\end{equation}
Then $V_\alpha$ maps $L^p(\Omega)$ continuously into 
$L^q(\Omega)$ and for $f\in L^p(\Omega)$ we have
\[
\|V_\alpha  f\|_{L^q(\Omega)} \le M\|f\|_{L^p(\Omega)},
\]
where 
\[
M=C(b-a)^{(\alpha-(n+2)\delta)/2}\quad\text{for some constant }
C=C(n,\alpha,\delta)>0. 
\]
\end{thm}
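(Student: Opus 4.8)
The plan is to realize $V_\alpha$ as a convolution on $\R^n\times\R$ against a kernel that is \emph{compactly supported in time}, and then invoke Young's convolution inequality; hypothesis \eqref{B.1} will turn out to be exactly what makes that kernel lie in the relevant $L^r$ space.

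First I would make the kernel explicit. Raising the heat kernel to the power $(n+2-\alpha)/n$, exactly as in \eqref{1.9} but with $\alpha$ there replaced by $n+2-\alpha$, gives
\[
\Phi(x,t)^{\frac{n+2-\alpha}{n}}
=\frac{1}{(4\pi)^{(n+2-\alpha)/2}}\,t^{-\frac{n+2-\alpha}{2}}\,
e^{-\frac{n+2-\alpha}{4n}\frac{|x|^2}{t}}\,\chi_{(0,\infty)}(t).
\]
Since $\Phi(x-y,t-s)=0$ for $s\ge t$, whenever $(x,t),(y,s)\in\Omega=\R^n\times(a,b)$ the factor $\Phi(x-y,t-s)^{(n+2-\alpha)/n}$ vanishes unless $0<t-s<b-a$. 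Hence, extending $f\in L^p(\Omega)$ by zero to all of $\R^n\times\R$ and setting
\[
K(x,t):=\Phi(x,t)^{\frac{n+2-\alpha}{n}}\,\chi_{(0,\,b-a)}(t),
\]
we have $V_\alpha f=K*f$ on $\Omega$, where $*$ is convolution on $\R^n\times\R$.

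Second, I would apply Young's inequality. Let $r\in[1,\infty)$ be determined by $\tfrac1r=1-\delta$, so that $1+\tfrac1q=\tfrac1p+\tfrac1r$. Then
\[
\|V_\alpha f\|_{L^q(\Omega)}\le\|K*f\|_{L^q(\R^n\times\R)}
\le\|K\|_{L^r(\R^n\times\R)}\,\|f\|_{L^p(\Omega)},
\]
so it remains only to estimate $\|K\|_{L^r}$. By Lemma \ref{lem2.2}, $\int_{\R^n}e^{-r\frac{n+2-\alpha}{4n}\frac{|x|^2}{t}}\,dx=C(n,\alpha,r)\,t^{n/2}$, whence
\[
\|K\|_{L^r(\R^n\times\R)}^{\,r}
=C\int_0^{b-a}t^{-\frac{r(n+2-\alpha)}{2}}\,t^{n/2}\,dt
=C\int_0^{b-a}t^{\frac{n}{2}-\frac{r(n+2-\alpha)}{2}}\,dt.
\]
The exponent exceeds $-1$ precisely when $r<\tfrac{n+2}{n+2-\alpha}$, i.e.\ when $\delta<\tfrac{\alpha}{n+2}$, which is exactly \eqref{B.1}; so the integral converges and equals a constant times $(b-a)^{1+\frac{n}{2}-\frac{r(n+2-\alpha)}{2}}$. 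Taking $r$-th roots and using $\tfrac1r\bigl(1+\tfrac{n}{2}\bigr)=(1-\delta)\tfrac{n+2}{2}$, one gets $\|K\|_{L^r}=C(n,\alpha,\delta)\,(b-a)^{(\alpha-(n+2)\delta)/2}$, which is the asserted value of $M$.

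The only delicate point — and the reason \eqref{B.1} is sharp on both sides — is the time truncation: the untruncated kernel $\Phi^{(n+2-\alpha)/n}$ never belongs to $L^r(\R^n\times\R)$, because its decay as $t\to\infty$ would force $r>\tfrac{n+2}{n+2-\alpha}$, whereas its blow-up as $t\to0^+$ demands the opposite inequality; it is the finiteness of $b-a$ together with $\delta<\alpha/(n+2)$ that puts $K$ into $L^r$. The endpoint exponents $p=1$, $q=\infty$ (and the case $\delta=0$, where $r=1$) are covered by the corresponding endpoint cases of Young's inequality. The details of this computation are carried out in \cite[Appendix B]{GT2016}.
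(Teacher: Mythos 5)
Your proof is correct, and it follows the standard route: realize $V_\alpha$ as convolution with the time-truncated kernel $K(x,t)=\Phi(x,t)^{(n+2-\alpha)/n}\chi_{(0,b-a)}(t)$, observe $\chi_{(0,b-a)}$ costs nothing because for $(x,t),(y,s)\in\Omega$ one automatically has $0<t-s<b-a$ on the support of the integrand, compute $\|K\|_{L^r}$ with $1/r=1-\delta$ via Lemma \ref{lem2.2}, and apply Young's inequality on $\R^n\times\R$; the strict inequality $\delta<\alpha/(n+2)$ is exactly what makes the $t$-integral converge, and the resulting power of $b-a$ checks out since $(1+n/2)/r-(n+2-\alpha)/2=(1-\delta)\frac{n+2}{2}-\frac{n+2-\alpha}{2}=\frac{\alpha-(n+2)\delta}{2}$. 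The paper itself does not give a proof here but defers to \cite[Appendix B]{GT2016}; your argument is the natural one and matches what is expected there.
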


Theorem \ref{thmB.2} is weaker than Theorem \ref{thmB.1} in
that the second inequality in \eqref{B.1} cannot be replaced with
equality. However it is stronger in that the cases $p=1$ and
$q=\infty$ are allowed.

\end{document}